\let\cal\mathcal
\def\AA{{\cal A}}
\def\BB{{\cal B}}
\def\CC{{\cal C}}
\def\FF{{\cal F}}
\def\GG{{\cal G}}
\def\II{{\cal I}}
\def\LL{{\cal L}}
\def\OO{{\cal O}}
\def\PP{{\cal P}}
\def\SS{{\cal S}}
\def\TT{{\cal T}}
\def\UU{{\cal U}}
\let\blb\mathbb
\def\bD{{\blb D}}
\def\bP{{\blb P}}
\def\bZ{{\blb Z}}
\def\bN{{\blb N}}
\def\bZ{{\blb Z}}
\let\frak\mathfrak
\def\fg{\frak{g}}
\def\aa{\frak{a}}
\def\bb{\frak{b}}
\def\ff{\frak{f}}
\def\Cohom{\operatorname{Cohom}}
\def\Comod{\operatorname{Comod}}
\def\Mod{\operatorname{Mod}}
\def\mod{\operatorname{mod}}
\def\modc{\mod^{\rm{cfp}}}
\def\nilp{\operatorname{nilp}}
\def\coh{\mathop{\text{\upshape{coh}}}}
\def\rad{\operatorname {rad}}
\def\PC{\operatorname {PC}}
\def\Rep{\operatorname {Rep}}
\def\repc{\rep^{\rm{cfp}}}
\def\rep{\operatorname{rep}}
\def\Ext{\operatorname {Ext}}
\def\Hom{\operatorname {Hom}}
\def\End{\operatorname {End}}
\def\im{\operatorname {im}}
\def\coker{\operatorname {coker}}
\def\ker{\operatorname {ker}}
\def\End{\operatorname {End}}
\DeclareMathOperator{\Ind}{Ind}
\DeclareMathOperator{\Mor}{Mor}
\DeclareMathOperator{\colim}{colim}
\DeclareMathOperator{\soc}{soc}
\DeclareMathOperator{\ind}{ind}
\newcommand\Db{D^{b}}
\renewcommand\t{\tau}
\newtheorem{lemma}{Lemma}[section]
\newtheorem{proposition}[lemma]{Proposition}
\newtheorem{theorem}[lemma]{Theorem}
\newtheorem{corollary}[lemma]{Corollary}
\theoremstyle{definition}
\newtheorem{example}[lemma]{Example}
\newtheorem{definition}[lemma]{Definition}
\theoremstyle{remark}
\newtheorem{remark}[lemma]{Remark}
\newdimen\uboxsep \uboxsep=1ex
\def\uboxn#1{\vtop to 0pt{\hrule height 0pt depth 0pt\vskip\uboxsep
\hbox to 0pt{\hss #1\hss}\vss}}
\def\uboxs#1{\vbox to 0pt{\vss\hbox to 0pt{\hss #1\hss}
\vskip\uboxsep\hrule height 0pt depth 0pt}}
\def\Ob{\operatorname{Ob}}
\def\PC{\operatorname{PC}}
\def\Cat{\mathfrak{Cat}}
\def\CAT{\mathfrak{Cat}}
\newcommand\exa{\nopagebreak \begin{center}\smallskip \nopagebreak               \begin{minipage}[t]{6cm}\sloppy}
\newcommand\exb{\end{minipage}\kern 1cm\begin{minipage}[t]{8cm}\sloppy}
\newcommand\exc{\end{minipage}\kern -3cm \smallskip\end{center}}
\def\Ja{J^{\overline{\aa}}}
\def\Jb{J^{\overline{\bb}}}
\def\oa{\overline{\aa}}
\def\ob{\overline{\bb}}
\title{Hereditary uniserial categories with Serre Duality}
\author{Adam-Christiaan van Roosmalen}
\address{Adam-Christiaan van Roosmalen\\Fakult\"at f\"ur Mathematik\\ Universit\"at Bielefeld\\D-33501 Bielefeld\\Germany}\email{vroosmal@math.uni-bielefeld.de}
\subjclass[2010]{18E10, 16G30}
\begin{document}

\bibliographystyle{amsplain}

\begin{abstract}
An abelian Krull-Schmidt category is said to be uniserial if the isomorphism classes of subobjects of a given indecomposable object form a linearly ordered poset.  In this paper, we classify the hereditary uniserial categories with Serre duality.  They fall into two types: the first type is given by the representations of the quiver $A_n$ with linear orientation (and infinite variants thereof), the second type by tubes (and an infinite variant).  These last categories give a new class of hereditary categories with Serre duality, called \emph{big tubes}.
\end{abstract}

\maketitle

\setcounter{tocdepth}{1}
\tableofcontents

\section{Introduction}

We will fix an algebraically closed field $k$, and will only consider $k$-linear categories.  We will say that a Hom-finite abelian category $\AA$ is uniserial if for every indecomposable $X \in \Ob \AA$ the subobjects of $X$ are linearly ordered by inclusion.  The uniserial hereditary length categories with only finitely many (nonisomorphic) simple objects have been classified in \cite{AmdalRingdal68} (see also \cite{ChenKrause09, CuadraGomezTorrecillas04, Gabriel73}).

\begin{theorem}\label{theorem:Length}
Let $\AA$ be a hereditary uniserial length category with finitely many isomorphism classes of simple objects, then $\AA$ is equivalent to either
\begin{itemize}
\item the category $\rep_k A_n$ of finite dimensional representations of the quiver $A_n$ with linear orientation, or
\item the category $\nilp_k \tilde{A}_n$ of finite dimensional nilpotent representations of the quiver $\tilde{A}_n$ with cyclic orientation.
\end{itemize}
\end{theorem}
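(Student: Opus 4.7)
The plan is to extract the Ext-quiver $Q$ of $\AA$, show that uniseriality forces $Q$ to be a linearly oriented $A_n$ or a cyclically oriented $\tilde A_n$, and then reconstruct $\AA$ from $Q$. I would reduce to the case where $Q$ is connected; otherwise $\AA$ splits as a product of blocks which can be treated separately.

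The combinatorial heart of the argument is the following claim: for any simples $S_i, S_j$ one has $\dim_k \Ext^1(S_i, S_j) \le 1$, and each $S_i$ admits at most one Ext-successor and at most one Ext-predecessor. All three statements follow from a single pullback construction. Given two essentially distinct non-split extensions
\[
0 \to S_j \to M \to S_i \to 0 \quad\text{and}\quad 0 \to S_{j'} \to M' \to S_i \to 0
\]
(either with $j \neq j'$, or with $j = j'$ but $[M], [M']$ linearly independent in $\Ext^1(S_i, S_j)$), form the pullback $N = M \times_{S_i} M'$. Then $N$ has length $3$, top $S_i$, and $\soc N = S_j \oplus S_{j'}$. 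If $N$ admitted a proper Krull-Schmidt decomposition $N = A \oplus B$, uniseriality of each summand would force one summand to be simple (a copy of $S_j$ or $S_{j'}$) and the other to be a length-$2$ extension of $S_i$; but this would make one coordinate of the class of $N$ in $\Ext^1(S_i, S_j \oplus S_{j'}) \cong \Ext^1(S_i, S_j) \oplus \Ext^1(S_i, S_{j'})$ vanish, contradicting the choice of $M, M'$. Hence $N$ is itself indecomposable with non-simple socle, contradicting uniseriality of $\AA$. This proves the claim, so each vertex of $Q$ has both in- and out-degree at most one; a connected finite such quiver is precisely $A_n$ (linear) or $\tilde A_n$ (cyclic).

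It remains to reconstruct $\AA$ from $Q$. By hypothesis every indecomposable of $\AA$ is uniserial, so has a unique composition series whose factors from top to bottom trace a path in $Q$. Conversely, because $\AA$ is hereditary and each arrow of $Q$ contributes a one-dimensional $\Ext^1$, iterated extensions produce (up to isomorphism) exactly one indecomposable for every (top, length) datum realised by a path in $Q$. For $Q = A_n$ this yields the $\binom{n+1}{2}$ indecomposables of $\rep_k A_n$; for $Q = \tilde A_n$ it yields the indecomposables of $\nilp_k \tilde A_n$. Hom-spaces between indecomposables can then be computed from $Q$ using hereditarity and matched against the path-algebra description to produce the asserted equivalence. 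The main obstacle, as I see it, is rigorously ruling out every Krull-Schmidt decomposition of the pullback $N$ in the first step, especially in degenerate cases where some of the indices $i, j, j'$ coincide or where $Q$ carries loops.
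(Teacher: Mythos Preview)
The paper does not prove this theorem; it is quoted in the introduction as a known result due to Amdal--Ringdal \cite{AmdalRingdal68} (see also \cite{ChenKrause09,CuadraGomezTorrecillas04,Gabriel73}) and is invoked as a black box, notably in Proposition~\ref{proposition:FiniteShape}. There is therefore nothing in the paper to compare your argument against.

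On its own merits your Ext-quiver analysis is correct, including the cases you worry about. The socle of the pullback $N$ is exactly $S_j\oplus S_{j'}$: any simple subobject of $N$ mapping nontrivially to the top $S_i$ would split the length-$3$ extension and force both $[M]$ and $[M']$ to vanish. If $N$ is indecomposable this already contradicts uniseriality. If $N=A\oplus B$ with $A$ simple, then $A\subset\soc N$, and expressing the class $([M],[M'])\in\Ext^1(S_i,S_j)\oplus\Ext^1(S_i,S_{j'})$ in the basis $(A,\soc B)$ of $S_j\oplus S_{j'}$ gives a pair with one zero component; when $j\ne j'$ this forces one of $[M],[M']$ to be zero, and when $j=j'$ it forces them to be linearly dependent. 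So the obstacle you flag is not a real one.

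The reconstruction step is where your sketch is thinnest. For $Q=A_n$ one shows the sink simple is projective and inducts to build a projective generator, identifying $\AA$ with $\rep_kA_n$ via its endomorphism algebra. For $Q=\tilde{A}_n$ there are no nonzero projectives in $\AA$ and that route is blocked; one either matches indecomposables and Hom-spaces by hand (your suggested route---feasible, since uniseriality forces each indecomposable to be determined by its top and length, and every nonzero morphism by its image), or passes to $\Ind\AA$ and identifies the controlling pseudocompact algebra as the completed path algebra of $\tilde{A}_n$, which is precisely the machinery the paper sets up in \S\ref{section:Grothendieck} and \S\ref{subsection:Tubes}.
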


Categories of the second type will be referred to as \emph{tubes}.  We wish to replace the condition on the length of the objects by the existence of a Serre functor.  Note that the two classes of categories mentioned above do have Serre duality, so that the existence of a Serre functor is a (strictly) weaker condition.  Our main result is the following (Theorem \ref{theorem:Classification} in the text).

\begin{theorem}\label{theorem:Introduction}
Let $\AA$ be an essentially small $k$-linear uniserial hereditary category with Serre duality.  Then $\AA$ is equivalent to one of the following
\begin{enumerate}
\item the category $\repc \LL$ of finitely presented and cofinitely presented representations of $\LL$ where $\LL$ is a locally discrete linearly ordered poset, either without minimal or maximal elements, or with both a minimal and a maximal element, or
\item a (big) tube.
\end{enumerate}
\end{theorem}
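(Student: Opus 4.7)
My plan is to derive the classification by analyzing the Auslander--Reiten structure forced by Serre duality, together with the tight constraints that uniseriality imposes on subobject lattices. The overall strategy is to use the AR translate $\tau$ to reconstruct a combinatorial skeleton (either a linearly ordered poset $\LL$ or a cyclic structure), then identify $\AA$ with the corresponding model category.

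\textbf{Step 1 (local AR structure).} Serre duality implies that $\AA$ has Auslander--Reiten sequences, hence an AR translate $\tau$, which is an auto-equivalence. Uniseriality rigidifies these sequences drastically: for any AR sequence $0 \to \tau Z \to E \to Z \to 0$, the middle term $E$ decomposes into at most two indecomposables, and the subobject chain of each summand is forced by that of $Z$. In particular, each indecomposable $X$ has at most one immediate predecessor (its unique maximal proper subobject if it exists) and at most one immediate successor (its unique minimal proper quotient if it exists). This produces a very thin ``ladder'' structure in the AR quiver and lets me reduce the classification to understanding a single column.

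\textbf{Step 2 (case split on the presence of simples).} I would first handle the case where $\AA$ has simple objects. By uniseriality, every indecomposable has a simple socle and simple top, and by hereditariness every indecomposable is determined (up to isomorphism) by its socle together with its length. The set of simples forms a single $\tau$-orbit by Step~1, and I split on whether this orbit is finite or infinite. A finite orbit gives a (finite) tube, and a classical length-bounded refinement recovers $\nilp_k \tilde A_n$; an infinite orbit gives either the linearly ordered finite-type case $\rep_k A_n$ or one of its half/bi-infinite variants, which are all subsumed by $\repc \LL$ for an appropriate locally discrete $\LL$ that has both a minimum and a maximum (or neither), matching Theorem~\ref{theorem:Length} where applicable.

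\textbf{Step 3 (no simples: constructing $\LL$ or a big tube).} This is where I expect the main obstacle. Without simples one cannot index indecomposables by (socle, length); instead I would define $\LL$ as the poset of equivalence classes of ``coherent threads'' of monomorphisms between indecomposables, with order coming from inclusion of subobjects. Uniseriality ensures the order is linear; Step~1 forces it to be locally discrete. The dichotomy in Theorem~\ref{theorem:Introduction} then corresponds to whether, upon traversing this thread under successive applications of $\tau$, the combinatorics close up into a cycle-like configuration (producing a big tube) or remain unbounded in both directions with no endpoints (producing $\repc \LL$ with $\LL$ having neither minimum nor maximum). The hard technical point here is controlling morphisms between indecomposables in different threads and proving that no ``hidden'' simple object can appear as a subquotient; this requires combining the AR formula from Serre duality with the uniserial filtration to compute Hom-spaces.

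\textbf{Step 4 (upgrading to an equivalence).} Once the combinatorial skeleton $\LL$ (or cyclic datum) is identified, I would construct a $k$-linear functor from the model category $\repc \LL$ (respectively the big tube) to $\AA$ sending the standard indecomposable representations to the corresponding indecomposables of $\AA$. Fullness and faithfulness on indecomposables follow by induction on the length of the ``interval'' in $\LL$ using the AR sequences from Step~1, and essential surjectivity is built into the construction of $\LL$. The main obstacle I anticipate is Step~3, particularly recognizing the big tube as a genuinely new phenomenon and verifying that the Serre functor it inherits matches the one on $\AA$.
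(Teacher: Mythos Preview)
Your case split in Steps~2 and~3 is where the plan breaks. The ``no simples'' case never occurs: Proposition~\ref{proposition:Uniserial} in the paper shows that in \emph{any} hereditary uniserial category with Serre duality every indecomposable has a simple socle and a simple top (this is a genuine consequence of Serre duality, not automatic from uniseriality alone). So Step~3 addresses an empty case, and all the interesting non-length phenomena --- big tubes and $\repc\LL$ for infinite $\LL$ --- must be handled inside Step~2. But your Step~2 arguments collapse there: the claim ``every indecomposable is determined by its socle together with its length'' presumes finite length, whereas in a big tube (and in $\repc\LL$ with $\LL$ lacking extrema) most indecomposables have \emph{infinite} length and are parametrized by socle, top, and a winding number. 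Likewise, the simples do not form a single $\tau$-orbit in general: for $\LL$ a locally discrete linear order strictly larger than $\bZ$ (e.g.\ two stacked copies of $\bZ$), $\tau$ shifts by one and has many orbits. Consequently the length induction in Step~4 cannot start.

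The paper avoids direct parametrization of indecomposables altogether. It shows that for any cofinite set of simples, the perpendicular subcategory $(\SS\setminus\SS_f)^\perp$ is a hereditary uniserial \emph{length} category with finitely many simples, hence $\rep A_n$ or $\nilp\tilde A_n$ by the classical theorem (Propositions~\ref{proposition:FiniteLength} and~\ref{proposition:FiniteShape}). Then $\AA$ is written as a filtered 2-colimit of these pieces, and the equivalence with the model category is built by comparing endomorphism algebras of injective cogenerators in the Ind-completions via the pseudocompact-algebra description of Grothendieck categories of finite type (Corollary~\ref{corollary:WhenEquivalent}). Your local AR analysis in Step~1 is essentially correct and matches Proposition~\ref{proposition:Uniserial}, but the global reconstruction needs this perpendicular-plus-colimit machinery rather than a direct AR-quiver-and-length argument.
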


We refer to \S\ref{section:BigTubes} for definitions.  Categories of the first type have been introduced in \cite{vanRoosmalen06} (see also \cite{Ringel02}); they are directed and generalize categories of the form $\rep A_n$ with $n \in \bN$.  Every object in $\repc \LL$ is finitely presented (it is the cokernel of a map between finitely generated projectives in $\Rep \LL$) and is cofinitely presented (it is the kernel of a map between finitely cogenerated injectives in $\Rep \LL$).  When $\LL$ has a minimal and a maximal element, then $\repc \LL \cong \rep \LL$ and they are equivalent to category of finitely presented representations of a thread quiver $\xymatrix{\cdot \ar@{..>}[r]^\PP & \cdot}$ for a linearly ordered poset $\PP$, possibly empty (see \cite{BergVanRoosmalen09} for the definition of a thread quiver and its representations.).

A big tube is an infinite generalization of a tube (A definition is given in \S\ref{section:BigTubes}).  It is not a length category and has infinitely many isomorphism classes of indecomposable simple objects.  Every indecomposable object in a big tube lies in a subcategory of the form $\nilp_k \tilde{A}_n$ and thus (roughly speaking) we may see a big tube as a union of its subtubes.  This approach (to consider a big tube as a filtered 2-colimit of tubes) will be taken in \S\ref{section:2Colimit} to finish the proof of Theorem \ref{theorem:Introduction}.

%To this end, we will embed a (small) Hom-finite length category in its category of Ind-objects.  Such categories will be Grothendieck categories of finite type, and these are described using coalgebras.  In \S\ref{section:Grothendieck}, we will give a brief recount of the theory of coalgebras (and dually pseudocompact algebras) which we will need to complete the proof of the classification in theorem \ref{theorem:Introduction}.

We do not know of any mention of big tubes in the literature, and thus believe these to be a new type of hereditary categories with Serre duality.  These categories are used, for example, in the construction of Hall algebras or cluster categories.

Our main motivation for introducing and studying big tubes comes from the following.  The category $\rep_k Q$ of finite dimensional representations of a tame quiver $Q$ has tubes as subcategories.  More precisely, every regular module lies in a tube and the embedding $\nilp_k \tilde{A}_n \to \rep_k Q$ maps Auslander-Reiten sequences to Auslander-Reiten sequences (so that the embedding maps the Auslander-Reiten quiver of $\nilp_k \tilde{A}_n$ to a component of the Auslander-Reiten quiver of $\rep_k Q$; it is of the form $\bZ A_\infty / \langle \t^{n+1} \rangle$).

By replacing the tame quiver $Q$ by a ``nice tame infinite variant'' one obtains a category $\AA$ of representations which contains a big tube as a full subcategory, such that the Auslander-Reiten translations in $\AA$ and in the big tube coincide.  Here, a ``nice tame infinite variant'' of $Q$ is given by a certain thread quiver.  We refer to \S\ref{subsection:Threads} for an example of a big tube in the category of representations of the thread quiver 

Likewise, in the category of coherent sheaves on weighted projective lines (see \cite{GeigleLenzing87} or \cite{Lenzing07}), the simple objects are contained in tubes.  The aforementioned example can be considered as (being derived equivalent to) an example of a weighted projective line with an infinite weight (Remark \ref{remark:InfiniteWeight}).

The proof of the classification consists of three steps.  Let $\AA$ be any essentially small $k$-linear uniserial hereditary category with Serre duality.  The first step (Proposition \ref{proposition:Uniserial}) is to prove some consequences of Serre duality, most importantly that $\AA$ has ``enough simples'' in the sense that every indecomposable object has a simple socle and a simple top.  We will then take a cofinite subset of isomorphism classes of simple objects and consider the perpendicular subcategory.  The second step of the proof is to say that this category is a hereditary uniserial length category with finitely many simple objects (Proposition \ref{proposition:FiniteShape}).

It then follows that $\AA$ is a filtered 2-colimit of such length subcategories $\AA_i$.  To prove that $\AA$ is equivalent to a category in Theorem \ref{theorem:Introduction}, we will take an appropiate such category $\BB$ and similarly write it as a filtered 2-colimit of length subcategories $\BB_i$.  Finding an equivalence between $\AA$ and $\BB$ is then the same as finding a consistent set of equivalences between the length subcategories $\AA_i$ of $\AA$ and the corresponding ones of $\BB$.  To find such equivalences, we will embed the categories $\AA_i$ and the categories $\BB_i$ in their Ind-closures.  Such categories are locally finite Grothendieck categories of finite type and functors between them are described using coalgebras or (dually) pseudocompact algebras.  We will use this structure to define a consistent set of functors from the subcategories $\AA_i$ to the subcategories $\BB_i$, inducing an equivalence between $\AA$ and $\BB$, completing the classification.   Relevant definitions and theorems about Grothendieck categories in are given in \S\ref{section:Grothendieck}.

%The article consists of three parts.  In the first part we define and discuss a big loop and a big tube.  In the second part, we prove that all hereditary uniserial categories with Serre duality are a union (or a filtered 2-colimit) of hereditary uniserial length categories.  In the third part, we combine this last property with Theorem \ref{theorem:Length} to complete the classification in Theorem \ref{theorem:Introduction}.

\textbf{Acknowledgments} The author wishes to thank Michel Van den Bergh for meaningful discussions, and wishes to thank Jan \v S\v tov\'\i\v cek and Joost Vercruysse for many useful comments on an early draft.  The author also gratefully acknowledges the financial and administrative support of the Hausdorff Center for Mathematics in Bonn.

\section{Preliminaries}

\subsection{Notations and conventions}

We will assume all categories are $k$-linear for an algebraically closed field $k$.  A category $\CC$ is called \emph{Hom-finite} if $\dim_k \Hom_\CC (A,B) < \infty$ for all $A,B \in \Ob \CC$, and an abelian category $\CC$ is called \emph{Ext-finite} if and only if $\dim_k \Ext^{i}_\CC (A,B) < \infty$ for all $A,B \in \Ob \CC$ and all $i \geq 0$.  An abelian Ext-finite category is thus automatically also Hom-finite.  An abelian category will be called \emph{semi-simple} if $\Ext^1(-,-) = 0$ and \emph{hereditary} if $\Ext^2(-,-) = 0$.

We will also choose a Grothendieck universe $\UU$ and assume all our categories are $\UU$-categories, i.e. every Hom-sets in the category is an element of $\UU$.  A category is called \emph{$\UU$-small} (or just small) if the object-set is also an element of $\UU$ and it is called \emph{essentially $\UU$-small} (or essentially small) if it is equivalent to a $\UU$-small category.

Following \cite[Theorem A]{ReVdB02} we will say that an Ext-finite hereditary category $\AA$ has \emph{Serre duality} \cite{BondalKapranov89} if and only if $\AA$ has almost split sequences and there is a one-one correspondence between the indecomposable projective objects $P$ and the indecomposable injective objects $I$, such that the simple top of $P$ is isomorphic to the simple socle of $I$.  The Auslander-Reiten translate in $\AA$ will be denoted by $\tau$.

\subsection{Paths in Krull-Schmidt categories}

For a Krull-Schmidt category $\AA$ we will denote by $\ind \AA$ a (chosen) maximal set of nonisomorphic indecomposables of $\AA$.  For a Krull-Schmidt subcategory $\BB$ of $\AA$, we will choose $\ind \BB$ as a subset of $\ind \AA$.  We wish to advice the reader to not confuse the set $\ind \AA$ with the category $\Ind \AA$ of ind-objects mentioned below.

Let $\AA$ be a Krull-Schmidt category and $A,B \in \ind \AA$.  An \emph{unoriented path} from $A$ to $B$ is a sequence of objects $A=X_0,X_1,\ldots, X_n = B$ such that $\Hom(X_i,X_{i+1}) \not= 0$ or $\Hom(X_{i+1},X_{i}) \not= 0$ for all $0 \leq i < n$.  Similarly, an \emph{oriented path} (also abbreviated to \emph{path}) from $A$ to $B$ is a sequence of objects $A=X_0,X_1,\ldots, X_n = B$ such that $\Hom(X_i,X_{i+1}) \not= 0$ for all $0 \leq i < n$.

An abelian category $\AA$ will be called \emph{indecomposable} if and only if it is nonzero and not equivalent to the product category of two nonzero categories.  If $\AA$ is Hom-finite, and hence Krull-Schmidt, then $\AA$ is indecomposable if and only if there is an unoriented path between any two indecomposable objects of $\AA$.

\subsection{Perpendicular subcategories}

Let $\AA$ be an abelian Ext-finite hereditary category and let $\SS \subseteq \Ob \AA$.  We will denote by $\SS^\perp$ the full subcategory of $\AA$ consisting of all objects $X$ with $\Hom(\SS,X) = \Ext(\SS,X) = 0$, called the category \emph{right perpendicular to $\SS$}.  It follows from \cite[Proposition 1.1]{GeigleLenzing91} that $\SS^\perp$ is again an abelian hereditary category and that the embedding $\SS^\perp \to \AA$ is exact.  If $\SS = \{E\}$ consists of a single object $E \in \Ob \AA$, then we will also write $E^\perp$ for $\SS^\perp$.

Let $E \in \Ob \AA$ be an exceptional object (i.e. $\Ext(E,E) = 0$).  It follows from \cite[Proposition 3.2]{GeigleLenzing91} that the embedding $i:E^\perp \to \AA$ has a left adjoint $L:\AA \to E^\perp$.  If $E$ is furthermore a simple object, then the left adjoint $L:\AA \to E^\perp$ is exact (see \cite[Proposition 2.2]{GeigleLenzing91}).  Also note that $L$ maps a simple object of $\AA$ to either a simple object in $E^\perp$ or to zero.

For easy reference, we will combine these results in a proposition.

\begin{proposition}\cite{GeigleLenzing91}\label{proposition:GeigleLenzing}
Let $\AA$ be an abelian Ext-finite hereditary category.  Let $\SS \subseteq \Ob \AA$ be a finite set of simple and exceptional objects.  The category $\SS^\perp$ is abelian and hereditary, the embedding $\SS^\perp \to \AA$ is exact and has an exact left adjoint.
\end{proposition}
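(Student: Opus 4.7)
The proposition collects three facts about $\SS^\perp$: it is an abelian hereditary category, the embedding is exact, and there is an exact left adjoint. The first two assertions hold for an arbitrary subset $\SS \subseteq \Ob \AA$ and are quoted directly from \cite[Prop.~1.1]{GeigleLenzing91}. The remaining content is to promote the existence and exactness of the left adjoint from the cited case of a single (simple) exceptional object to that of a finite set of them, which I would handle by induction on $|\SS|$.

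The base case $|\SS| = 1$ is Propositions 3.2 and 2.2 of \cite{GeigleLenzing91}. For the inductive step, write $\SS = \SS_0 \sqcup \{E\}$ and let $i_0 : \SS_0^\perp \hookrightarrow \AA$ be the embedding with exact left adjoint $L_0$ furnished by the inductive hypothesis. My plan is to apply the base case to $L_0(E)$ regarded as an object of the hereditary category $\SS_0^\perp$. Concretely, I would show:
\begin{enumerate}
\item $L_0(E)$ is either zero or a simple exceptional object of $\SS_0^\perp$.
\item Taking the right perpendicular of $L_0(E)$ inside $\SS_0^\perp$ yields exactly $\SS^\perp$.
\end{enumerate}
Granted these two points, applying the $|\SS|=1$ case inside $\SS_0^\perp$ produces an exact left adjoint $L_1 : \SS_0^\perp \to \SS^\perp$; the composite $L_1 \circ L_0 : \AA \to \SS^\perp$ is then an exact left adjoint to the embedding $\SS^\perp \hookrightarrow \AA$.

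For step (2) the identification is routine: for $X \in \SS_0^\perp$, the adjunction gives $\Hom_\AA(E,X) \cong \Hom_{\SS_0^\perp}(L_0 E, X)$, and since $L_0$ is exact this promotes to an isomorphism on $\Ext^1$; both sides of the identity $\SS^\perp = L_0(E)^\perp_{\SS_0^\perp}$ are thus picked out by the same vanishing conditions. Step (1) is the main technical point. That $L_0(E)$ is simple or zero follows from the fact that exact left adjoints to perpendicular embeddings send simples to simples or zero (\cite[Prop.~2.2]{GeigleLenzing91}). For exceptionality I would argue as follows: the unit $\eta_E : E \to i_0 L_0 E$ has kernel and cokernel whose composition factors lie in $\SS_0$; by adjunction (using exactness of $L_0$) we have
\[
\Ext_{\SS_0^\perp}(L_0 E, L_0 E) \;\cong\; \Ext_\AA(E, i_0 L_0 E),
\]
and the long exact sequence obtained from the short exact sequences carving out $K := \ker \eta_E$ and $C := \coker \eta_E$ reduces the vanishing to $\Ext_\AA(E,E) = 0$ together with Hom- and Ext-vanishing from $E$ into objects with composition factors in $\SS_0$; the latter holds since $i_0 L_0 E \in \SS_0^\perp$ controls the relevant $\Ext$-groups and $E$ is itself simple (and, in the nontrivial case $L_0 E \neq 0$, non-isomorphic to any simple in $\SS_0$).

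The main obstacle I anticipate is precisely this verification that $L_0(E)$ remains exceptional in $\SS_0^\perp$; the rest of the inductive step is formal adjunction bookkeeping together with the already-established stability of the abelian and hereditary structure under taking perpendiculars.
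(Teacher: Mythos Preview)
The paper does not give a proof of this proposition; it is recorded as a summary of results from \cite{GeigleLenzing91}, and the passage from a single simple exceptional object to a finite set is left implicit. Your inductive strategy is the natural way to fill this in, and step (2) together with the formal bookkeeping is correct. However, your argument for step (1)---that $L_0(E)$ is exceptional in $\SS_0^\perp$---has a genuine gap, and the claim is in fact false in general.

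Your reduction gives (for $L_0 E \neq 0$, so that $\eta_E$ is a monomorphism) $\Ext^1_{\SS_0^\perp}(L_0 E, L_0 E) \cong \Ext^1_\AA(E, i_0 L_0 E) \cong \Ext^1_\AA(E, C)$, where $C = \coker \eta_E$ has composition factors in $\SS_0$. You then assert this vanishes because ``$i_0 L_0 E \in \SS_0^\perp$ controls the relevant $\Ext$-groups''. But membership in $\SS_0^\perp$ constrains $\Hom$ and $\Ext$ \emph{from} $\SS_0$ into $i_0 L_0 E$, not $\Ext$ \emph{from} $E$ into objects filtered by $\SS_0$; nothing in the hypotheses forces $\Ext^1_\AA(E,S)=0$ for $S\in\SS_0$. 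Concretely, take $\AA = \rep_k Q$ where $Q$ has two vertices and one arrow in each direction, $\SS_0 = \{S_2\}$, $E = S_1$. Both simples are exceptional, yet $L_0(S_1)$ is the unique simple of $S_2^\perp$ (the length-two module $M$ with $0\to S_1\to M\to S_2\to 0$) and one computes $\Ext^1(M,M)\cong k$; indeed $S_2^\perp \simeq \nilp k[x]$. So $L_0(E)$ is not exceptional and you cannot invoke the base case.

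In that example $\{S_1,S_2\}^\perp = 0$ and the conclusion holds trivially, so the proposition survives---but your induction does not reach it. A repair would need either a direct argument for the left adjoint at the inductive step that does not rely on exceptionality of $L_0 E$, or an additional hypothesis such as an ordering of $\SS$ along which $\Ext^1$ vanishes in one direction (so that $C$ really is right-orthogonal to $E$). In the paper's actual applications to uniserial categories the simples never have two-way $\Ext^1$, so the obstruction does not arise there; but as a proof of the proposition in the generality stated, your step (1) is incomplete.
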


\subsection{2-colimits}

In this article, we will sometimes see a category as a union of some suitable small subcategories.  In other words, some categories of consideration will be 2-colimits of smaller categories (all 2-colimits in this article can be seen as unions of full subcategories).  We will repeat some definitions and results from \cite{Waschkies04} (see also \cite{Borceux94, Borceux94b, Kelly05}).  We will work in the strict 2-category $\Cat$ of small categories, thus:
\begin{itemize}
\item the 0-cells are given by small categories,
\item the 1-cells are functors,
\item the 2-cells are natural transformations.
\end{itemize}
Composition of 1-cells is denoted by $\circ$.  Following \cite{MacLane71, Waschkies04} we will write $\circ$ for vertical composition of 2-cells and $\bullet$ for horizontal composition.

%We will define a 2-functor $\aa: \PP \to \Cat$ to be a functor from a (small) category $\PP$ to the underlying 1-category of $\Cat$.  More precisely:
\begin{definition}
Let $\PP$ be a small 1-category.  A \emph{2-functor (with strict identities)} $\aa: \PP \to \Cat$ is given by the following data:
\begin{enumerate}
\item a 0-cell $\aa(i)$ of $\Cat$ for every $i \in \Ob \PP$,
\item a 1-cell $\aa(s): \aa(i) \to \aa(j)$ of $\Cat$ for every morphism $s:i \to j$ in $\PP$ and $\aa(1_i) = 1_{\aa(i)}$ for all $i \in \Ob \PP$,
\item a natural equivalence $\Phi(s,t): \aa(t \circ s) \stackrel{\sim}{\rightarrow} \aa(t) \circ \aa(s)$ for all composable morphisms $s,t \in \Mor \PP$,
\end{enumerate}
satisfying the following condition: for three composable morphisms $u,t,s \in \Mor \PP$, we have the following commutative diagram
$$\xymatrix@C=80pt{
\aa(u \circ t \circ s) \ar[r]^{\Phi(t \circ s, u)} \ar[d]_{\Phi(s, u \circ t)} & \aa(u)\aa(t \circ s) \ar[d]^{1_{\aa(u) \bullet \Phi(s,t)}} \\
\aa(u \circ t)\aa(s) \ar[r]^{\Phi(t,u) \bullet 1_{\aa(s)}} & \aa(u) \aa(t) \aa(s)
}$$
A 2-functor is called \emph{strict} if $\Phi(s,t) = 1$, thus $\aa(t \circ s) = \aa(t) \circ \aa(s)$ for all composable $s,t \in \Mor \PP$
\end{definition}

\begin{remark}
A strict 2-functor is just a functor from $\II$ to the underlying 1-category of $\Cat$.
\end{remark}

\begin{example}
For every object $\CC$ of $\Cat$, there is a 2-functor $\CC: \PP \to \Cat$ sending every object of $\PP$ to $\CC$ and sending every morphism of $\CC$ to the identity on $\CC$.
\end{example}

\begin{definition}
Let $\aa, \bb: \PP \to \Cat$ be two 2-functors.  A 2-natural transformation $\ff: \aa \to \bb$ between 2 diagrams consists of the following data:
\begin{enumerate}
\item a 1-cell $\ff_i: \aa(i) \to \bb(i)$ of $\Cat$ for every $i \in \Ob \PP$, and
\item a natural equivalence $\theta^\ff_s: \bb(s) \circ \ff_i \to \ff_j \circ \aa(s)$ for every morphism $s:i \to j$ in $\PP$.
\end{enumerate}
such that for any two composable morphisms $s:i \to j$, $t: j \to k$ in $\PP$, we have the following commutative diagram
$$\xymatrix@C=80pt{
\bb(t \circ s) \circ \ff_i \ar[r]^{\Phi^{\bb}(s,t) \bullet 1_{\ff_i}} \ar[dd]_{\theta^\ff_{t \circ s}}& \bb(t)\circ \bb(s) \circ \ff_i \ar[d]^{1_{\bb(t)} \bullet \theta^\ff_s}\\
& \bb(t) \circ \ff_i \circ \aa(s) \ar[d]^{\theta^\ff_t \bullet 1_{\aa(s)}} \\
\ff_k \circ \aa(t \circ s) \ar[r]^{\ff_k \bullet \Phi^{\aa}(s,t)} & \ff_k \circ \aa(t) \circ \aa(s)
}$$
\end{definition}

\begin{definition}
Let $\aa, \bb: \II \to \Cat$ be two 2-functors and $\ff, \fg: \aa \to \bb$ be 2-natural transformations.  A \emph{modification} $\Lambda: \ff \to \fg$ consists in giving a 2-cell $\Lambda_i: \ff_i \to \fg_i$ for all objects $i \in \II$ such that for all $s:i \to j$ in $\II$ the following diagram commutes
$$\xymatrix{
\bb(s) \circ \ff_i \ar[r]^{\theta^\ff_s} \ar[d]_{1_{\bb(s)} \bullet \Lambda_i}&\ff_j \circ \aa(s) \ar[d]^{\Lambda_j \bullet 1_{\aa(s)}} \\
\bb(s) \circ \fg_i \ar[r]^{\theta^\fg_s} & \fg_j \circ \aa(s)}$$
\end{definition}

\begin{definition}
The diagrams, 2-natural transformations, and modifications form a (strict) 2-category called $2\FF(\II,\Cat)$.
\end{definition}

We can now give the definition of a 2-colimit.

\begin{definition}
Let $\aa: \II \to \Cat$ be a 2-functor.  We say $\aa$ admits a 2-colimit if and only if there exist
\begin{enumerate}
\item a category $2\colim \aa$, and
\item a 2-natural transformation $\sigma: \aa \to 2\colim \aa$,
\end{enumerate}
such that for every category $\CC$ the functor
$$(- \circ \sigma): \Hom_{\Cat}(2\colim \aa, \CC) \to \Hom_{2\FF}(\aa, \CC)$$
is an equivalence of categories.
\end{definition}

The 2-category $\Cat$ of all small categories has 2-colimits (\cite[Theorem A.3.4]{Waschkies04}).

\begin{theorem}
Let $\II$ be a small category and $\aa: \II \to \Cat$ a 2-functor.  Then $\aa$ admits a 2-colimit.
\end{theorem}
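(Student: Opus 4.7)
The plan is to construct $2\colim \aa$ explicitly as a localization of a Grothendieck-style category and then verify the universal property by hand. First I would form $\GG$ with $\Ob \GG = \{(i,X) \mid i \in \Ob \II,\ X \in \Ob \aa(i)\}$ and $\Hom_{\GG}((i,X),(j,Y)) = \coprod_{s : i \to j} \Hom_{\aa(j)}(\aa(s)(X),\,Y)$. Composition of $(s,f) : (i,X) \to (j,Y)$ with $(t,g) : (j,Y) \to (k,Z)$ would be defined by $(t \circ s,\ g \circ \aa(t)(f) \circ \Phi(s,t)_X)$. Associativity of this rule is exactly the commutative square appearing in the definition of a 2-functor with strict identities; existence of identities uses $\aa(1_i) = \id$.

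Next I would localize $\GG$ at the class $W$ of arrows $(s,\id_{\aa(s)(X)})$ and set $\CC := \GG[W^{-1}]$. The functors $\sigma_i : \aa(i) \to \CC$ sending $X \mapsto (i,X)$ and $(f : X \to Y) \mapsto (1_i,f)$ come equipped with natural equivalences $\theta^\sigma_s : \sigma_i \Rightarrow \sigma_j \circ \aa(s)$ whose components are the images in $\CC$ of the arrows $(s,\id_{\aa(s)(X)})$, invertible by construction. The coherence diagram for $\theta^\sigma$ reduces to the equality $(t,\id) \circ (s,\id) = (1_k,\Phi(s,t)_X) \circ (t \circ s,\id)$ in $\GG$, which is immediate from the composition formula.

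For the universal property, given a 2-natural transformation $\ff : \aa \to \CC'$ into a category $\CC'$ viewed as a constant 2-functor, I would define $\tilde \ff : \GG \to \CC'$ by $\tilde \ff(i,X) = \ff_i(X)$ and $\tilde \ff(s,f) = \ff_j(f) \circ (\theta^\ff_s)_X$. Functoriality of $\tilde \ff$ is precisely the coherence condition satisfied by $\theta^\ff$, and each arrow of $W$ is sent to an isomorphism because $\theta^\ff_s$ is a natural equivalence; hence $\tilde \ff$ descends uniquely to a functor $\CC \to \CC'$. Conversely, every functor $\CC \to \CC'$ restricts along $\sigma$ to a 2-natural transformation $\aa \to \CC'$, and these two operations are mutually inverse up to natural isomorphism. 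An analogous (and easier) verification on 2-cells shows that modifications correspond to natural transformations between the induced functors $\CC \to \CC'$, so that $(- \circ \sigma)$ is an equivalence of categories.

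The main obstacle will be the bookkeeping of the coherence data---the isomorphisms $\Phi(s,t)$ entering the composition law of $\GG$, and the natural equivalences $\theta^\ff_s$ entering the definition of $\tilde \ff$---together with a size check that $\CC$ remains a $\UU$-category. The latter is handled by representing morphisms of $\CC$ as finite zigzags modulo a congruence generated by a small set of relations, using the smallness of $\II$ and of each $\aa(i)$; the former is a lengthy but mechanical diagram chase driven by the square axiom for $\Phi$ and the coherence condition for $\theta$.
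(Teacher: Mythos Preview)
The paper does not actually prove this theorem: it merely states it and cites \cite[Theorem A.3.4]{Waschkies04}. So there is no ``paper's own proof'' to compare against; the result is quoted as background.

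That said, your outline is essentially the standard construction (and the one in Waschkies): form the Grothendieck construction on $\aa$ and invert the cocartesian arrows. The composition formula in $\GG$, the description of $\sigma_i$ and $\theta^\sigma_s$, and the verification of the universal property via the localization property are all correct in spirit. One small point worth tightening: you should check that the equivalence $(- \circ \sigma)$ is genuinely an equivalence of categories and not just a bijection on isomorphism classes---your sketch of the modification/natural-transformation correspondence handles full faithfulness, but essential surjectivity deserves an explicit sentence (every functor out of $\CC$ arises from some $\ff$, which follows from the universal property of localization). The size issue you flag is real but routine given that $\II$ and each $\aa(i)$ are small.
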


The following result is \cite[Proposition A.5.5]{Waschkies04}

\begin{proposition}\label{proposition:AbelianColimit}
Let $\aa:\PP \to \Cat$ be a 2-functor where $\PP$ is a small filtered category.  Suppose that $\aa(i)$ is an additive (abelian) category for any $i \in \Ob \PP$ and that $\aa(s)$ is an additive (exact) functor for every morphism $s \in \Mor \PP$.  Then $2\colim \aa$ is an additive (abelian) category and the natural functors $\sigma_i:\aa(i) \to 2\colim \aa$ are additive (exact).
\end{proposition}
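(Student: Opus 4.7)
The plan is to give an explicit construction of $2\colim\aa$ as a filtered colimit category and then verify, step by step, that the additive (resp.\ abelian) structure of the $\aa(i)$ descends to it.  Using that $\PP$ is filtered, one can describe $2\colim\aa$ concretely: an object is a pair $(i,X)$ with $i\in\Ob\PP$ and $X\in\Ob\aa(i)$, the natural functor $\sigma_i$ sends $X$ to $(i,X)$, and
\[
\Hom_{2\colim\aa}((i,X),(j,Y)) \;=\; \colim_{(s:i\to k,\, t:j\to k)} \Hom_{\aa(k)}(\aa(s)(X),\aa(t)(Y)),
\]
where the colimit is taken over the filtered category of cospans under $(i,j)$ in $\PP$, with transition maps induced by the 1-cells $\aa(u)$ and the natural equivalences $\Phi(-,-)$.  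Composition is defined by lifting two representatives to a common $\aa(k)$ via filteredness and composing there; the coherence diagram for $\Phi$ in the definition of a 2-functor is precisely what makes this composition well defined on equivalence classes.

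The additive structure is then read off from the $\aa(k)$'s.  Given two objects $(i,X),(j,Y)$, choose a cospan $s:i\to k,\,t:j\to k$ and set their biproduct to be $(k,\aa(s)(X)\oplus\aa(t)(Y))$; one checks that different choices of cospan yield canonically isomorphic objects.  A zero object is furnished by $(i,0)$ for any $i$.  The abelian group structure on each Hom set transfers from the filtered colimit of abelian groups on the right-hand side above, and bilinearity of composition follows from additivity of each $\aa(u)$.  In particular, each $\sigma_i$ is additive by construction.

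For the abelian case, a morphism $f:(i,X)\to(j,Y)$ in $2\colim\aa$ has a representative $\tilde f:\aa(s)(X)\to\aa(t)(Y)$ in some $\aa(k)$.  Its kernel (resp.\ cokernel) in $\aa(k)$, pushed forward to $2\colim\aa$ via $\sigma_k$, provides a kernel (resp.\ cokernel) of $f$; here exactness of the transition functors $\aa(u)$ is essential, so that the chosen kernel is canonically identified with the one obtained from any other representative, and so that the universal property in $\aa(k)$ transports to a universal property in $2\colim\aa$ (again by lifting any test morphism to a common stage using filteredness).  The same argument shows that every monic is a kernel and every epi is a cokernel: a monic in $2\colim\aa$ can be detected and represented by a monic in some $\aa(k)$, since $\aa(u)$ is exact and filtered colimits of abelian groups are exact.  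Exactness of each $\sigma_i$ is immediate from the construction, as the defining kernels and cokernels are simply inherited.

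The main obstacle, and where filteredness of $\PP$ together with the coherence data $\Phi(-,-)$ of the 2-functor are used most heavily, is verifying that all these constructions are well defined on the equivalence classes of morphisms and independent of the choice of cospan.  Concretely, every time two objects or two morphisms are compared, one has to exhibit a further morphism in $\PP$ coequalizing the two choices; this is exactly what filteredness provides.  Once this bookkeeping is done, the abelian axioms (existence of kernels/cokernels, factorization, and the fact that coimage equals image) are inherited stage by stage from the $\aa(i)$, and the universal property of $2\colim\aa$ guarantees the resulting structure matches the one induced by any compatible system of additive (exact) functors out of the diagram.
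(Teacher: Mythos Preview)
The paper does not actually prove this proposition; it simply cites \cite[Proposition A.5.5]{Waschkies04} and moves on. Your proposal, by contrast, sketches a direct proof. The outline you give is the standard one and is essentially correct: realize the filtered 2-colimit concretely via pairs $(i,X)$ with Hom-sets given by the filtered colimit over cospans, push biproducts and (co)kernels down from a common stage, and use filteredness plus exactness of the transition functors to show all constructions are independent of the choices made.

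One point deserves a sharper justification than ``filtered colimits of abelian groups are exact.'' To see that a monomorphism $f$ in $2\colim\aa$ admits a monic representative, argue as follows: pick any representative $\tilde f$ in $\aa(k)$ and let $K=\ker\tilde f$. Since $\sigma_k(\tilde f)=f$ is monic and $\sigma_k$ is additive, the map $\sigma_k(K)\to\sigma_k(\aa(s)X)$ is zero; hence the inclusion $K\hookrightarrow\aa(s)X$ becomes zero after some $\aa(u)$ with $u:k\to k'$. But $\aa(u)$ is exact, so $\aa(u)(K)$ is the kernel of $\aa(u)(\tilde f)$, and a kernel whose inclusion is zero must vanish. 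Thus $\aa(u)(\tilde f)$ is monic. The dual argument handles epimorphisms. With this refinement your sketch goes through, and it is presumably close to what Waschkies does; in any case it is more than the paper itself provides.
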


The next result (\cite[Proposition A.3.6]{Waschkies04}) will be applicable to all 2-colimits we will consider.

\begin{proposition}\label{proposition:FilteredColimit}
Let $\PP$ be a small filtered category such that between two given objects there is at most one morphism, and let $\aa: \PP \to \Cat$ be a 2-functor such that every functor $\aa(s)$ is fully faithful ($s \in \Mor \PP$).  Any object $X \in 2\colim \aa$ is isomorphic to an object of the form $\sigma_i(X')$ where $X' \in \aa(i)$.  For any $i,j \in \Ob \PP$, $X \in \Ob \aa(i)$, and $Y \in \Ob \aa(j)$, we have that
$$\Hom_{2\colim \aa}(\sigma_i X, \sigma_j Y) \cong \Hom_{\aa(k)}(\aa(s)X,\aa(t)Y)$$
where $k \in \Ob \PP$ such that there are morphisms $s:i \to k$ and $t:j \to k$.  The above isomorphism is given by $\varphi_k: \aa(k) \to 2\colim \aa$.
\end{proposition}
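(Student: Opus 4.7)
The plan is to construct an explicit candidate category $\CC$, show it satisfies the universal property of $2\colim \aa$, and then read off both assertions from the construction. Since a colimit is determined up to equivalence, this suffices.

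\textbf{Construction of $\CC$.} Take the objects of $\CC$ to be pairs $(i,X)$ with $i \in \Ob \PP$ and $X \in \Ob \aa(i)$. Given $(i,X)$ and $(j,Y)$, use that $\PP$ is filtered to choose some $k$ with morphisms $s:i \to k$ and $t:j \to k$; by the at-most-one-morphism hypothesis, $s$ and $t$ are uniquely determined by $k$. Set
\[
\Hom_\CC\bigl((i,X),(j,Y)\bigr) := \Hom_{\aa(k)}\bigl(\aa(s)X,\aa(t)Y\bigr).
\]
For any other valid choice $k'$, pick a common upper bound $k''$ with $u:k \to k''$ and $u':k' \to k''$; the at-most-one-morphism hypothesis forces $u \circ s$ and $u' \circ s'$ to agree, and similarly for $t$. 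Full faithfulness of $\aa(u)$ and $\aa(u')$ combined with the natural equivalences $\Phi(-,-)$ then yield canonical bijections between the three candidate Hom-sets, so the definition is independent of $k$. Composition is defined by lifting two composable morphisms to a common index $k$ (which exists by filtration) and composing there; independence of the choice and associativity follow from the bijections above together with the coherence of the $\Phi$'s.

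\textbf{The functors $\sigma_i$ and the 2-natural transformation.} Set $\sigma_i(X) = (i,X)$ on objects and use the identity of $\PP$ at $i$ on morphisms; choosing $k=i$ in the Hom-formula shows $\sigma_i$ is fully faithful. For each $s:i\to j$ in $\PP$, the identity morphism in $\aa(j)$ together with the formula for Hom gives a canonical isomorphism $\sigma_j(\aa(s)X) = (j,\aa(s)X) \cong (i,X) = \sigma_i(X)$ in $\CC$, natural in $X$; this defines the required natural equivalence $\theta^\sigma_s$. The compatibility diagram for composable $s,t$ commutes because it reduces, via the Hom-formula, to the coherence pentagon for the $\Phi$'s, which holds in $\aa$.

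\textbf{Universal property.} Given any category $\DD$ and a 2-natural transformation $\ff:\aa \to \DD$ (thought of as a 2-functor into the constant functor $\DD$), define $F:\CC \to \DD$ by $F(i,X) = \ff_i(X)$ and, for a morphism $\alpha \in \Hom_{\aa(k)}(\aa(s)X,\aa(t)Y)$, by $F(\alpha) = (\theta^\ff_t)^{-1}_Y \circ \ff_k(\alpha) \circ (\theta^\ff_s)_X$, read inside $\DD$. The 2-naturality axiom for $\ff$ guarantees that this is independent of $k$, and that $F$ is a functor. The composite $F \circ \sigma_i$ is isomorphic to $\ff_i$ via $(\theta^\ff_{\id_i})^{\pm 1}$, and one checks that these isomorphisms assemble into a modification; uniqueness up to unique modification is forced because every object of $\CC$ is by construction in the image of some $\sigma_i$. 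This identifies $\CC$ with $2\colim \aa$. The first assertion is then tautological since every object of $\CC$ is literally $\sigma_i(X)$, and the Hom-formula of the proposition is the very definition of $\Hom_\CC$.

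\textbf{Main obstacle.} All the substance is in coherence bookkeeping: tracking how the $\Phi(s,t)$ of $\aa$ and the $\theta^\ff_s$ of $\ff$ interact when one changes the index $k$ or composes morphisms. The hypothesis that $\PP$ has at most one morphism between any two objects eliminates all nontrivial choices of arrows in $\PP$, so the coherence diagrams reduce to a handful of commutations that follow directly from the 2-functor and 2-natural transformation axioms; full faithfulness of the $\aa(s)$ is what makes the transport maps bijections. Apart from this diagram chasing, the argument is formal.
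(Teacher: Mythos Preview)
The paper does not actually prove this proposition: it is quoted verbatim as \cite[Proposition~A.3.6]{Waschkies04} and used as a black box, with no argument given in the paper itself. Your proposal is therefore not being compared against an in-paper proof but against an external reference.

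That said, your approach is the standard and correct one, and it is essentially the construction Waschkies uses: build the candidate category $\CC$ by hand as the ``union'' of the $\aa(i)$ along the transition functors, verify the universal property, and then read off both assertions directly from the definition of $\CC$. The hypotheses are used exactly where you say: filteredness to find common upper bounds, the at-most-one-morphism condition to make the lift of arrows in $\PP$ unambiguous, and full faithfulness of the $\aa(s)$ to make the transport bijections between candidate Hom-sets invertible (without it you would only get a colimit of sets of morphisms, not a single representative). The coherence bookkeeping you flag as the main obstacle is real but, as you note, collapses under the thin-$\PP$ assumption; there is nothing further to add.
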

\section{Grothendieck categories and tubes}\label{section:Grothendieck}

In the proof of our main theorem, we wil be interested in functors between categories without injectives.  In order to handle such functors better, we will embed such a category $\AA$ first in its category of Ind-objects $\Ind \AA$ defined below.  Such a category will be a Grothendieck category of finite type if $\AA$ is an (essentially small) Hom-finite length category.

In this section, we recall some relevant definitions and results.  Our aim is Corollary \ref{corollary:WhenEquivalent} which describes the functors we will be interested in.  We will only use these results when $\AA$ is a tube (as the other categories we will consider have enough injectives); the category of Ind-objects of a tube is briefly described in \S\ref{subsection:Tubes}.

\subsection{Locally finite Grothendieck categories}

An abelian category is called a \emph{Grothendieck category} if it has a generator and exact direct limits.  It is well-known that a Grothendieck category has injective envelopes \cite[Theorem II.2]{Gabriel62} and an injective cogenerator.

Let $\AA$ be an essentially small abelian category.  We denote by $\Ind \AA$ the full subcategory of $\Mod \AA$ consisting of all left exact contravariant functors $\AA \to \Mod k$.  It has been shown in \cite{Gabriel62} that $\Ind \AA$ is a Grothendieck category.  Every object $A \in \Ind \AA$ can be written as a formal small filtered colimit in $\AA$ (thus an object of $\Ind \AA$ is given by a functor from a small filtered category to $\AA$) and the Hom-sets may be computed by
$$\Hom_{\Ind \AA} (\underset{\longrightarrow}{\lim\nolimits}{}_i A_i, \underset{\longrightarrow}{\lim}{}_j B_j) = \underset{\longleftarrow}{\lim}{}_i {\underset{\longrightarrow}{\lim\nolimits}}{}_j \Hom_\AA(A_i,B_j)$$

If $\AA$ and $\BB$ are essentially small abelian categories and $F:\AA \to \BB$ is a functor, then $F$ lifts to a functor $\overline{F}: \Ind \AA \to \Ind \BB$ as follows (\cite{KashiwaraShapiro06})
$$\overline{F}(\underset{\longrightarrow}{\lim}{}_i A_i) = \underset{\longrightarrow}{\lim}{}_i F(A_i).$$
The action on the Hom-spaces is the obvious one.  If $F$ is faithful, fully faithful, left exact, or right exact, then the same holds for $\overline{F}$.  Furthermore, it follows easily from the definition that a left or right adjoint functor  $L,R: \BB \to \AA$ of $F$ lifts to a left or right adjoint functor $\overline{L},\overline{R}: \BB \to \AA$, repsectively.

We have the following.

\begin{proposition}\label{proposition:ToInjectives}
Let $F: \AA \to \BB$ be a functor with an exact left adjoint between two essentially small categories, then $\overline{F}:\Ind \AA \to \Ind \BB$ maps injective objects to injective objects.
\end{proposition}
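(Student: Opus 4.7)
The plan is to exploit the standard fact that, for an adjunction $L \dashv R$ between abelian categories in which $L$ is exact, the right adjoint $R$ preserves injectives, and to apply it after lifting everything from $\AA,\BB$ to the Grothendieck categories $\Ind \AA, \Ind \BB$.

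First I would unpack the hypothesis: write $L: \BB \to \AA$ for the exact left adjoint of $F:\AA \to \BB$. By the construction of the lift recalled just above the proposition, $F$ lifts to $\overline{F}: \Ind \AA \to \Ind \BB$ by applying it termwise to filtered colimits, and $L$ likewise lifts to $\overline{L}: \Ind \BB \to \Ind \AA$. The remarks preceding the proposition state explicitly that adjunctions lift (so $\overline{L} \dashv \overline{F}$) and that left exactness and right exactness are preserved by the lift; in particular, since $L$ is exact, so is $\overline{L}$.

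Next I would invoke the general fact that in any adjunction $\overline{L} \dashv \overline{F}$ between abelian categories with $\overline{L}$ exact, the right adjoint $\overline{F}$ preserves injective objects. The argument is a one-liner: for an injective $I \in \Ind \AA$ and a monomorphism $A \hookrightarrow B$ in $\Ind \BB$, the map
$$\Hom_{\Ind \BB}(B, \overline{F}(I)) \to \Hom_{\Ind \BB}(A, \overline{F}(I))$$
is, via adjunction, the map $\Hom_{\Ind \AA}(\overline{L}(B), I) \to \Hom_{\Ind \AA}(\overline{L}(A), I)$. Since $\overline{L}$ is exact, $\overline{L}(A) \hookrightarrow \overline{L}(B)$ remains monic, and injectivity of $I$ in $\Ind \AA$ (which is a Grothendieck category, so injectivity makes sense) yields surjectivity. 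Hence $\overline{F}(I)$ is injective in $\Ind \BB$.

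This is essentially formal, so there is no real obstacle; the only points worth emphasizing are (i) that the lifting construction really does preserve the adjunction (a quick check using the Hom-formula for $\Ind \AA$ displayed earlier, interchanging limits and filtered colimits appropriately), and (ii) that $\Ind \AA$ and $\Ind \BB$ are Grothendieck categories, so the notion of injective object is the usual one and the elementary proof sketched above applies.
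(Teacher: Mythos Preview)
Your proposal is correct and follows essentially the same approach as the paper: lift the adjunction and the exactness of $L$ to $\Ind$, then use that a right adjoint to an exact functor preserves injectives. The paper's proof is just the terse version of yours, writing $\Hom(-,\overline{F}I)\cong\Hom(\overline{L}-,I)$ and noting the right-hand side is exact.
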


\begin{proof}
We know that $\overline{F}$ has an exact left adjoint $\overline{L}$.  For any injective $I \in \Ind \AA$ we have $\Hom(-,\overline{F}I) \cong \Hom(\overline{L}-,I)$.  Since this last functor is exact, we know that $\overline{F}I$ is injective.
\end{proof}

%More generally, this can be formulated as the following proposition.
%
%\begin{proposition}
%There is a 2-functor from the 2-category of small abelian categories to the 2-category of Grothendieck categories.
%\end{proposition}

A Grothendieck category is called \emph{locally finite} if it has a small generating set consisting of objects of finite length.  Thus if $\AA$ is a $\Hom$-finite abelian length category, then $\Ind \AA$ is a locally finite Grothendieck category.  One can recover $\AA$ from $\Ind \AA$ as the full subcategory of finite length objects, or as the full subcategory of compact objects.  Recall that an object $A \in \Ind \AA$ is called \emph{compact} if and only if the functor $\Hom(A,-): \Ind \AA \to \Mod k$ commutes with arbitrary direct sums.

For locally finite categories, we have the following result (\cite[Theorem IV.2]{Gabriel62}, see also \cite{Matlis58}) concerning injective objects.

\begin{theorem}\label{theorem:Matlis}
Let $\BB$ be a locally finite category.  Every injective object is a direct sum of indecomposable injective objects, and all direct sums of injectives are injective.  Moreover, this decomposition is essentially unique up to permutation of the direct summands.
\end{theorem}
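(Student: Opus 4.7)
The plan is to establish the three assertions of the theorem in the order (i) closure of injectives under arbitrary direct sums, (ii) existence of a decomposition into indecomposable injectives, and (iii) uniqueness of such a decomposition, since (ii) will be extracted from (i) by a Zorn argument and (iii) is a Krull--Schmidt--Azumaya phenomenon independent of the other two. Fix a generating family $\{U_\lambda\}_{\lambda \in \Lambda}$ of $\BB$ consisting of finite-length (hence noetherian) objects.

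\emph{Step 1: arbitrary direct sums of injectives are injective.} I would prove a Baer-style criterion tailored to Grothendieck categories: an object $E$ of $\BB$ is injective if and only if, for every $\lambda$ and every subobject $V \hookrightarrow U_\lambda$, every morphism $V \to E$ extends to $U_\lambda \to E$. Granted this, let $\{I_\alpha\}$ be a family of injectives and let $f: V \to \bigoplus_\alpha I_\alpha$ be a map with $V$ a subobject of some $U_\lambda$. Since $V$ is of finite length, its image has finitely many simple composition factors, so $f$ factors through a finite subsum $\bigoplus_{\alpha \in F} I_\alpha$. A finite direct sum of injectives is injective, so $f$ extends to $U_\lambda$, proving that $\bigoplus_\alpha I_\alpha$ is injective.

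\emph{Step 2: existence of a decomposition.} Given an injective $I$, apply Zorn's lemma to the poset of families $\{J_\beta\}$ of indecomposable injective subobjects of $I$ whose internal sum is direct; let $\{J_\beta\}_{\beta \in B}$ be a maximal such family and set $J = \bigoplus_{\beta \in B} J_\beta$. By Step 1, $J$ is injective, hence a direct summand: $I = J \oplus K$. If $K \neq 0$, then because $\BB$ is locally finite, $K$ contains a nonzero finite-length subobject and therefore a simple subobject $S$. Form the injective envelope $E(S)$ of $S$ inside $K$ (it exists since Grothendieck categories have injective envelopes). Because $S$ is simple and essential in $E(S)$, any nonzero direct summand of $E(S)$ must contain $S$, whence $E(S)$ is indecomposable. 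Adjoining $E(S)$ to the family contradicts maximality, so $K = 0$.

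\emph{Step 3: uniqueness.} I would show that every indecomposable injective $J$ has local endomorphism ring. An endomorphism $\varphi: J \to J$ is either a monomorphism, in which case injectivity of $J$ makes the inclusion $\varphi(J) \subseteq J$ split and indecomposability forces $\varphi(J) = J$, so $\varphi$ is an automorphism; or $\varphi$ has nonzero kernel, which must meet the essential simple socle of $J$, so the socle lies in $\ker \varphi$. The non-automorphisms therefore form a two-sided ideal, yielding locality. The Krull--Schmidt--Azumaya theorem for direct sum decompositions into objects with local endomorphism rings then delivers the stated uniqueness.

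\emph{Main obstacle.} The technical core lies in Step 1: justifying a Baer-type criterion for injectivity in a Grothendieck category that restricts testing to subobjects of the finite-length generators. Once established, Steps 2 and 3 follow routinely from the existence of injective envelopes in Grothendieck categories and from Krull--Schmidt--Azumaya applied to objects with local endomorphism rings.
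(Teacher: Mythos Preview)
The paper does not give a proof of this theorem; it is quoted as a classical result with a reference to \cite[Theorem IV.2]{Gabriel62} and \cite{Matlis58}. Your three-step outline is precisely the standard argument found in those sources (Baer-type criterion using the finite-length generators, then a Zorn argument, then Krull--Schmidt--Azumaya via locality of $\End(J)$), so there is nothing to compare. Two small points worth tightening: in Step~1, the phrase ``its image has finitely many simple composition factors, so $f$ factors through a finite subsum'' really uses that the image is \emph{noetherian}, so the ascending chain of its intersections with finite partial sums stabilises; in Step~3 you tacitly use that an indecomposable injective in a locally finite category has a \emph{simple} essential socle, which is true (any nonzero object contains a simple, and two independent simples in $\soc(J)$ would yield a nontrivial decomposition of $J$), but deserves a sentence.
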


An object $B$ in a locally finite Grothendieck category $\BB$ is said to be \emph{quasi-finite} if and only if $\dim \Hom_\BB (X,B) < \infty$ for all $X$ of finite length.  A Grothendieck category is said to be of \emph{finite type} if and only if it is locally finite and $\dim_k \Hom(A,B) < \infty$ for all $A,B$ of finite length; thus all objects of finite length are quasi-finite.

Let $X \in \Ob \BB$.  It has been shown in \cite[Proposition 1.3]{Takeuchi77} that the functor $X \otimes_k -: \Mod k \to \BB$ has a left adjoint if and only if $X$ is quasi-finite.  This left adjoint will be denoted by $\Cohom(X,-): \BB \to \Mod k$.  %Using the adjunction, we find that $\Cohom(X,Y) \cong \underset{\longrightarrow}{\lim}{}_i \Hom(Y_i,X)^*$ where $(-)^*$ is the vector space dual and $Y_i$ ranges over all finite length subobjects of $Y$.
The adjunction implies the following universal property: for each $X,Y \in \Ob \BB$ where $X$ is quasi-finite, there is a map $\theta: Y \to \Cohom(X,Y) \otimes_k X$ in $\BB$ such that each map $Y \to V \otimes_k X$ factors uniquely through $\theta$.

Note that $\Cohom(X,Y)^* = \Hom_k(\Cohom(X,Y),k) \cong \Hom_\BB(Y,X \otimes_k k) \cong \Hom_\BB(Y,X)$.

\subsection{Coalgebras and pseudocompact algebras}

It is known \cite{Gabriel62} that every locally finite Grothendieck category is dual to a category of pseudocompact modules over a pseudocompact ring.  If the Grothendieck category is of finite type, then it is equivalent to the category of comodules over a coalgebra (\cite{Takeuchi77}).  We will recall some definitions about pseudocompact algebras and coalgebras, following the exposition in \cite{VandenBergh10} closely.  We refer to \cite{Gabriel62} for more information on pseudocompact rings, and to \cite{BrzezinskiWisbauwer03} for information on coalgebras and comodules.

The category $\Mod k$ of $k$-vector spaces with the usual tensor product is a monodial category.  A \emph{$k$-coalgebra} is a coalgebra object in this category, i.e. a triple $(C, \Delta, \epsilon)$ where $C \in \Mod k$, $\Delta: C \to C \otimes_k C$, and $\epsilon: C \to k$ such that the following diagrams commute
$$\xymatrix@C+25pt{C \ar[r]^\Delta \ar[d]_{\Delta}& C \otimes_k C \ar[d]^{1_C \otimes_k \Delta} & C\ar@{=}[rd] \ar[r]^{\Delta} \ar[d]_{\Delta}&  C \otimes_k C \ar[d]^{\epsilon \otimes_k 1_C} \\
C \otimes_k C \ar[r]_{\Delta \otimes_k 1_C}& C \otimes_k C \otimes_k C & C \otimes_k C \ar[r]_{1_C \otimes_k \epsilon} & C}$$
The maps $\Delta$ and $\epsilon$ are called the \emph{comultiplication} and the \emph{counit}, respectively.  We will write $C$ for the coalgebra $(C,\Delta,\epsilon)$, leaving the maps $\Delta$ and $\epsilon$ understood.  Let $C$ and $D$ be coalgebras.  We define \emph{left $C$-comodules}, \emph{right $C$-comodules}, and \emph{$(C,D)$-bicomodules} in the usual way (see for example \cite{BrzezinskiWisbauwer03}).  The category of left or right $C$-comodules will be denoted by $C-\Comod$ or $\Comod-C$, respectively.

\begin{remark}
When $X$ is a quasi-finite object in a locally finite Grothendieck category, then the universal map $\theta: X \to \Cohom(X,X) \otimes X$ induces a coalgebra structure on $\Cohom(X,X)$.  We refer to \cite{Takeuchi77} for details.
\end{remark}

A topological vector space is called \emph{pseudocompact} if it is complete and it has a basis of open subsets which are subspaces of finite codimension.  The category of pseudocompact vector spaces will be denoted by $\PC(k)$, the morphisms are continuous $k$-linear maps.  Note that every pseudocompact vector space is Hausdorff.  The topology of a finite dimensional pseudocompact vector space is necessarily the discrete topology, and conversely every such topological vector space is pseudocompact.

The category $\PC(k)$ is dual to the category $\Mod k$; the dualities are given by
\begin{eqnarray*}
\bD: \Mod k \to \PC (k)^\circ &:& V \mapsto \Hom_{k}(V,k) \\
\bD: \PC (k) \to \Mod k ^\circ &:& V \mapsto \Hom_{\PC(k)}(V,k)
\end{eqnarray*}
where the topology on $\Hom_k(V,k)$ is generated by the kernels of $\bD V \to \bD W$ whenever $W$ is a finite dimensional subspace of $V$.

We can use the duality $\bD$ to give $\PC(k)$ the structure of a monoidal category as follows: for all $V,W \in \PC(k)$ we have
$$V \otimes W = \bD (\bD W \otimes_k \bD V).$$
Note that there are isomorphisms $k \otimes V \cong V \cong V \otimes k$ in $\PC(k)$, natural in $V$.

\begin{remark}
The tensor product on $\PC(k)$ will be denoted by $- \otimes -$ while the tensor product on $\Mod k$ will be denoted $- \otimes_k -$.
\end{remark}

\begin{remark}
For two pseudocompact vector spaces $\bD V, \bD W$ (thus $V,W \in \Mod k$), we have $\bD V \otimes \bD W \cong \bD (W \otimes_k V)$.  This is the completion of $\bD V \otimes_k \bD W$ (taking the tensor product in $\Mod k$, i.e. as vector spaces).
\end{remark}

A \emph{pseudocompact $k$-algebra} is an algebra object in the category $\PC(k)$, i.e. it is a triple $(A,m,e)$ where $A \in \PC(k)$, $m: A \otimes A \to A$, and $e: k \to A$ such that the following diagrams commute
$$\xymatrix@C+25pt{A \otimes A \otimes A \ar[r]^{m \otimes 1} \ar[d]_{m \otimes 1_A}& A \otimes A \ar[d]^{m} & A \ar@{=}[rd] \ar[r]^{1_A \otimes e} \ar[d]_{e \otimes 1_A}&  A \otimes A \ar[d]^{m} \\
A \otimes A \ar[r]_{m}& A & A \otimes A \ar[r]_{m} & A}$$
We will always write $A$ for the pseudocompact algebra $(A,m,e)$.  Let $A$ be a pseudocompact algebra.  \emph{Pseudocompact modules} over pseudocompact algebras are defined using similar diagrams.  We will denote the category of left pseudocompact $A$-modules by $\PC(A)$.

We see that pseudocompact algebras are are dual to coalgebras, i.e. if $C$ is a coalgebra, then $\bD C$ is a pseudocompact algebra and vice versa.

\begin{remark}
If $A$ is a pseudocompact algebra, then $A$ is also an algebra using the inclusion $A \otimes_k A \to A \otimes A$.  When the multiplication $A \otimes_k A \to A$ is continuous, then it lifts to a map $A \otimes A \to A$.  Hence a pseudocompact $k$-vector space with a continuous multiplication is the same as a pseudocompact $k$-algebra. \end{remark}

\begin{example}
The algebra $k[[x]]$ is a pseudocompact algebra where the topology is generated by the subspaces $(x^n) \triangleleft k[[x]]$.  This pseudocompact algebra is dual to the divided power coalgebra $k[x]$, i.e. the comultiplication is given by $\Delta(x^n) = \sum_{i+j=n} x^i \otimes_k x^j$.
\end{example}

\begin{remark}\label{remark:RingIsNotAlgebra}
A pseudocompact ring (\cite{Gabriel62}) which is also a $k$-algebra is not necessarily a pseudocompact $k$-algebra.  For example, take $K = k(x)$.  With the discrete topology, $K$ is a pseudocompact ring but there is no topology such that $K$ is a pseudocompact $k$-algebra.  Indeed, $\dim_k K = \aleph_0$ and such objects cannot occur in the essential image of $\bD: \Mod k \to \PC(k)$.  We note however that $K$ is a pseudocompact $K$-algebra with the discrete topology.
\end{remark}

\begin{proposition}
Let $C$ be a coalgebra and $A = \bD C$ be the dual pseudocompact $k$-algebra.  The categories $\Comod-C$ and $\PC(A)$ are dual.
\end{proposition}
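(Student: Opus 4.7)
The plan is to transport the entire comodule structure across the contravariant monoidal equivalence $\bD$ already set up in the excerpt. Recall that $\bD: \Mod k \to \PC(k)^\circ$ is an equivalence of categories whose quasi-inverse is also denoted $\bD$, and that the monoidal structure on $\PC(k)$ was defined precisely so that there is a natural isomorphism $\bD V \otimes \bD W \cong \bD(W \otimes_k V)$ (note the swap), together with $\bD k \cong k$. The proof should amount to checking that $\bD$ carries the structure diagrams of a comodule to the structure diagrams of a pseudocompact module, and is essentially formal once the monoidal bookkeeping is in place.

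First I would define the functor $\bD: \Comod\text{-}C \to \PC(A)$. Given a right $C$-comodule $(M,\rho)$ with $\rho: M \to M \otimes_k C$, apply $\bD$ to obtain $\bD\rho: \bD(M \otimes_k C) \to \bD M$, and compose with the natural isomorphism $\bD(M \otimes_k C) \cong \bD C \otimes \bD M = A \otimes \bD M$ to get a map $\mu: A \otimes \bD M \to \bD M$. Then I would verify that the coassociativity square for $\rho$ dualizes, via the coherence of the monoidal isomorphisms, to the associativity square for $\mu$, where the comultiplication $\Delta: C \to C \otimes_k C$ dualizes to the multiplication $m: A \otimes A \to A$ (this is precisely how the pseudocompact algebra structure on $A = \bD C$ arises). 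The counit diagram for $\rho$ likewise dualizes to the unit diagram for $\mu$, using $\bD \epsilon = e$ and $\bD k \cong k$. On morphisms, a comodule map $f: M \to M'$ goes to $\bD f: \bD M' \to \bD M$, and the compatibility diagram is the image under $\bD$ of the intertwining square defining $f$.

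Next I would construct a quasi-inverse $\bD: \PC(A) \to \Comod\text{-}C$ by running the same construction backwards: a pseudocompact left $A$-module $(N,\mu)$ gives $\bD\mu: \bD N \to \bD(A \otimes \bD \bD N) \cong \bD \bD N \otimes_k \bD A \cong N \otimes_k C$ after identifying $\bD\bD N$ with $N$ via the unit of the duality. The associativity/unit diagrams for $\mu$ dualize to the coassociativity/counit diagrams for the resulting $\rho$. The two functors are quasi-inverse because the underlying duality $\bD$ on $\Mod k$ and $\PC(k)$ is an equivalence and all the added structure maps are transported by natural isomorphisms.

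The main obstacle is purely notational rather than conceptual: one has to track the order-reversal in $\bD V \otimes \bD W \cong \bD(W \otimes_k V)$ carefully through the pentagon, so as to check that the dualized coassociativity triangle really lands on the associativity triangle (and not on some twisted variant), and that the convention ``right $C$-comodule becomes left pseudocompact $A$-module'' is forced by this swap. Once the coherence diagram from the definition of a 2-functor-style monoidal equivalence is invoked, all remaining verifications are diagram chases performed by applying $\bD$ to the defining squares of a comodule.
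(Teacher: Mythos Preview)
Your approach is exactly the paper's: the paper's proof is a one-line version of what you wrote, simply stating that the duality is induced by $\bD$, sending a right $C$-comodule $(M,\rho:M\to M\otimes_k C)$ to the left pseudocompact $A$-module $\bD M$ with action $A\otimes\bD M\cong\bD(M\otimes_k C)\xrightarrow{\bD\rho}\bD M$. Your expanded discussion of the diagram chases and the quasi-inverse is correct (modulo a small slip in the inverse construction, where the target should read $\bD N\otimes_k C$ rather than $N\otimes_k C$).
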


\begin{proof}
The duality is induced by $\bD$, mapping a right $C$-comodule $(M,\Delta:M \to M \otimes_k C)$ to a left pseudocompact $A$-module $(\bD M, \bD \Delta: \bD M \otimes \bD (M \otimes_k C) \cong \bD C \otimes \bD M)$.
\end{proof}

The following theorem is \cite[Theorem 5.1]{Takeuchi77}.

\begin{theorem}\label{Theorem:FiniteTypeClassification}
Every Grothendieck category $\AA$ of finite type is equivalent to the category $\operatorname{Comod-}C$ for some coalgebra $C$.  The coalgebra $C$ is given by $C \cong \Cohom(I,I)$ where $I$ is a quasi-finite injective cogenerator for $\AA$.
\end{theorem}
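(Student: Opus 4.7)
The plan is to produce the equivalence explicitly as $F := \Cohom(I, -): \AA \to \Comod\text{-}C$, following the strategy of \cite{Takeuchi77}, with quasi-inverse given by cotensoring with $I$ over $C$.

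First I would establish the existence of a quasi-finite injective cogenerator. Let $\{S_j\}_{j \in J}$ be a complete set of representatives of the isomorphism classes of simple objects in $\AA$, let $E(S_j)$ denote the injective envelope of $S_j$, and set $I = \bigoplus_j E(S_j)$. Then $I$ is injective by Theorem \ref{theorem:Matlis}, and is a cogenerator since every nonzero object of $\AA$ has a nonzero simple subobject, which embeds into a summand $E(S_j) \hookrightarrow I$. For $X \in \AA$ of finite length, $\Hom(X, E(S_j))$ is nonzero only when $S_j$ occurs as a composition factor of $X$, so $\Hom(X, I) = \bigoplus_j \Hom(X, E(S_j))$ is a finite sum of finite-dimensional spaces by the finite-type hypothesis, hence $I$ is quasi-finite.

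Next I would equip $C := \Cohom(I, I)$ with a coalgebra structure via the universal property of the canonical map $\theta_I: I \to C \otimes_k I$ described before the theorem. The comultiplication $\Delta: C \to C \otimes_k C$ is the unique map satisfying $(\Delta \otimes 1_I) \circ \theta_I = (1_C \otimes \theta_I) \circ \theta_I$, and the counit $\epsilon: C \to k$ is the unique map such that $(\epsilon \otimes 1_I) \circ \theta_I$ is the canonical isomorphism $I \cong k \otimes_k I$; coassociativity and counitality then follow from uniqueness in the universal property. The same universal property endows each $\Cohom(I, Y)$ with a natural right $C$-comodule structure, yielding a well-defined functor $F: \AA \to \Comod\text{-}C$.

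Finally, I would define the candidate quasi-inverse $G: \Comod\text{-}C \to \AA$ sending a comodule $M$ to the cotensor product $M \,\square_C\, I$, i.e.\ the equalizer of the two maps $M \otimes_k I \rightrightarrows M \otimes_k C \otimes_k I$ induced by the coaction of $M$ and the coaction of $I$. One checks that $F$ and $G$ form an adjoint pair, and then verifies that the unit $Y \to G(F(Y))$ is an isomorphism for $Y = I$ (where it reduces to the defining properties of $\theta_I$ and $\epsilon$), hence for all direct sums of summands of $I$, and finally for arbitrary $Y$ by choosing an injective coresolution $0 \to Y \to J^0 \to J^1$ with $J^0, J^1$ built from summands of $I$ (available via Theorem \ref{theorem:Matlis}) and applying the five lemma. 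The main obstacle is essential surjectivity of $F$: the finite-type hypothesis is crucial here, as it forces every $C$-comodule to be the filtered colimit of its finite-dimensional sub-comodules, thereby reducing the problem to finite-dimensional comodules, which are in turn matched with finite-length objects of $\AA$ via the duality between finite-dimensional quotients of $C$ and finite-length subobjects of $I$ together with a composition-series dévissage.
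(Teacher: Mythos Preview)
Your proposal is correct and follows exactly the strategy of Takeuchi \cite{Takeuchi77}; the paper itself does not supply an independent proof but simply records this result as \cite[Theorem 5.1]{Takeuchi77}, so your sketch is in line with the intended argument. One minor correction: the fact that every $C$-comodule is the filtered colimit of its finite-dimensional subcomodules holds for \emph{any} coalgebra over a field (the fundamental theorem of comodules) and does not depend on the finite-type hypothesis; that hypothesis is needed earlier, to ensure that the injective cogenerator $I$ is quasi-finite so that $\Cohom(I,-)$ is defined at all.
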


Expressing the previous theorem in terms of pseudocompact algebras gives us the following.
\begin{corollary}\label{Corollary:FiniteTypeClassification}
Every Grothendieck category $\AA$ of finite type is dual to the category $\PC(A)$ for some pseudocompact $k$-algebra $A$.  The pseudocompact algebra $A$ is given by $A = \End I$ where $I$ is a quasi-finite injective cogenerator for $\AA$.
\end{corollary}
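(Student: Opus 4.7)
The plan is to derive this corollary as a direct formal consequence of Theorem~\ref{Theorem:FiniteTypeClassification}, the preceding Proposition relating $\Comod\text{-}C$ and $\PC(A)$ via $\bD$, and the identification $\Cohom(X,Y)^* \cong \Hom_\BB(Y,X)$ stated earlier. There is essentially no new content to prove; the task is just to assemble the pieces and check that the pseudocompact algebra structure on $\End I$ obtained by dualization agrees with the usual composition.

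First, I apply Takeuchi's theorem (Theorem~\ref{Theorem:FiniteTypeClassification}) to the Grothendieck category $\AA$ of finite type. This yields an equivalence $\AA \simeq \Comod\text{-}C$, where $C = \Cohom(I,I)$ and $I$ is a quasi-finite injective cogenerator (which exists since $\AA$ is locally finite by Theorem~\ref{theorem:Matlis} and of finite type). The coalgebra structure on $C$ is the one described in the Remark on $\Cohom$, induced by the universal map $\theta: I \to \Cohom(I,I) \otimes I$.

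Next, I set $A := \bD C$. By the discussion of the duality $\bD: \Mod k \to \PC(k)^\circ$ and the fact that $\bD$ interchanges coalgebra and algebra objects in the respective monoidal categories (since $\bD$ is a monoidal duality between $\Mod k$ and $\PC(k)$), the coalgebra $C$ dualizes to a pseudocompact $k$-algebra $A$. Applying the preceding Proposition, which asserts $\Comod\text{-}C$ and $\PC(A)$ are dual categories, I compose the two equivalences to get
$$\AA \simeq \Comod\text{-}C \simeq \PC(A)^\circ,$$
i.e.\ $\AA$ is dual to $\PC(A)$, as desired.

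Finally, I need to identify $A$ with $\End I$ as pseudocompact algebras. As vector spaces this is immediate: the identity $\Cohom(X,Y)^* \cong \Hom_\BB(Y,X)$ recalled just before the subsection on coalgebras gives $A = \bD C = \Cohom(I,I)^* \cong \Hom_\BB(I,I) = \End I$. The mildly nontrivial check, and the only place where one actually has to unwind definitions, is that the multiplication on $A$ coming from dualizing the comultiplication $\Delta$ on $\Cohom(I,I)$ agrees with composition on $\End I$; this follows from the universal property of $\theta$, since $\Delta$ is constructed precisely so that its dual encodes composition of endomorphisms of $I$ (the coaction of $C$ on any comodule $\Hom_\BB(M,I)^*$ dualizes to the module structure over $\End I$ via composition). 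The pseudocompact topology on $\End I$ is then the one inherited from $\bD$, namely the topology with open basis given by annihilators of finite-dimensional subspaces of $C$, equivalently by the two-sided ideals $\{f \in \End I \mid f|_{I'} = 0\}$ for finite length subobjects $I' \subseteq I$. I expect the only step requiring care is this last verification that the dualized coalgebra structure is the composition algebra structure; everything else is bookkeeping.
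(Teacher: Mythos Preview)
Your proposal is correct and follows exactly the approach the paper intends: the paper presents this corollary simply as ``expressing the previous theorem in terms of pseudocompact algebras,'' i.e.\ dualizing Theorem~\ref{Theorem:FiniteTypeClassification} via $A=\bD C$ and the Proposition that $\Comod\text{-}C\simeq\PC(A)^\circ$, together with the identification $\Cohom(I,I)^*\cong\End I$. Your additional unwinding of why the dualized comultiplication gives composition and your description of the topology are just explicit versions of what the paper leaves implicit (and records in the subsequent Remark); one small slip is that the sets $\{f\in\End I:f|_{I'}=0\}$ are left ideals rather than two-sided ideals, but this does not affect the argument.
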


\begin{remark}
In the statement of Corollary \ref{Corollary:FiniteTypeClassification}, the topology on $A$ comes from $A \cong \bD \Cohom (I,I)$.  This topology corresponds with the usual topology on $A$, namely the topology generated by the kernels of $A \cong \Hom(I,I) \to \Hom(I_i,I)$ where $I_i$ ranges over the finite length subobjects of $I$.
\end{remark}

\begin{remark}
In \cite{Gabriel62} it has been shown that every locally finite Grothendieck category (thus not necessarily of finite type) is dual to the category $\PC(R)$ for a pseudocompact ring.  If the category $\AA$ is $k$-linear then $R$ will be a $k$-algebra, but $R$ might fail to be a pseudocompact $k$-algebra as the following example illustrates.
\end{remark}

\begin{example}
Let $K = k(x)$ as in Remark \ref{remark:RingIsNotAlgebra}.  With the discrete topology $K$ is a pseudocompact ring and hence $\PC(K)^\circ$ is a locally finite category.  It is however not of finite type over $k$ as $K$ is simple in $\PC(K)^\circ$ but $\dim_k (\End K) = \infty$.
Note that $\PC(K)^\circ$ is of finite type over $K$ and that $K$ with the discrete topology is a pseudocompact $K$-algebra (as $K$ is finite dimensional over $K$).
\end{example}

\subsection{Functors between Grothendieck categories of finite type}

Let $\AA, \BB$ be Grothendieck categories of finite type.  We will be interested in left exact functors $\AA \to \BB$ which commute with direct sums.  Since $\AA$ and $\BB$ are equivalent to categories of comodules of certain coalgebras, the description of such functors is given by a version of the Eilenberg-Watts Theorem for coalgebras.  We refer to \cite{BrzezinskiWisbauwer03} or \cite{Takeuchi77} for a proof.

\begin{theorem}\label{theorem:CoalgebraFunctors}
Let $\AA, \BB$ be Grothendieck categories of finite type.  Let $I$ and $J$ be quasi-finite injective cogenerators of $\AA$ and $\BB$, respectively.  Denote by $C$ and $D$ the $k$-coalgebras $\Cohom(I,I)$ and $\Cohom(J,J)$, respectively.  The category of all left exact functors $\AA \to \BB$ which commute with direct sums is equivalent to the category of $(C,D)$-bicomodules.
\end{theorem}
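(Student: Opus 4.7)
The plan is to prove this as an Eilenberg–Watts type theorem for coalgebras. By Theorem~\ref{Theorem:FiniteTypeClassification} we may replace $\AA$ and $\BB$ by $\Comod\text{-}C$ and $\Comod\text{-}D$, where $I$ and $J$ correspond to $C$ and $D$ (regarded as comodules over themselves). Under this reduction, the claim is that the category of left exact, direct sum preserving functors $\Comod\text{-}C \to \Comod\text{-}D$ is equivalent to the category of $(C,D)$-bicomodules.

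I would construct quasi-inverse functors in the two directions. In one direction, send a bicomodule $M$ to the cotensor functor $\Phi(M) = - \square_C M$. Left exactness in the first variable follows because the cotensor product is defined as the equalizer of a pair of maps between tensor products over $k$, and $- \otimes_k M$ is exact; preservation of direct sums holds because equalizers and coproducts of vector spaces commute with each other. In the other direction, send a functor $F$ to $\Psi(F) = F(C)$. As a right $D$-comodule this is given by the functor $F$ itself, while the left $C$-coaction is obtained by applying $F$ to the left coaction $\Delta: C \to C \otimes_k C$, viewed as a map of right $C$-comodules (coaction on the right-hand tensor factor). Using the right comodule decomposition $C \otimes_k C \cong \bigoplus_i C$ (coming from choosing a basis of the left factor) together with the assumption that $F$ preserves direct sums, one identifies $F(C \otimes_k C) \cong C \otimes_k F(C)$, yielding the desired coaction $F(C) \to C \otimes_k F(C)$. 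Coassociativity and counitality are inherited from $C$.

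Verifying $\Psi \circ \Phi \cong \id$ is easy: $C \square_C M \cong M$ as $(C,D)$-bicomodules by the standard unit isomorphism for cotensor product. The main obstacle is $\Phi \circ \Psi \cong \id$, i.e.\ showing that the natural comparison map
\[
\eta_X : X \square_C F(C) \longrightarrow F(X)
\]
is an isomorphism for every $X \in \Comod\text{-}C$. Both sides are left exact in $X$ and preserve direct sums, so they agree on $C$ (by the unit isomorphism and the tautology $F(C) = F(C)$), and hence on every direct sum $C^{(S)}$. To extend to all $X$, I would use that $C$ is an injective cogenerator of $\Comod\text{-}C$, so every $X$ embeds into a direct sum $C^{(S_0)}$; choosing a cokernel and repeating yields an injective copresentation
\[
0 \longrightarrow X \longrightarrow C^{(S_0)} \longrightarrow C^{(S_1)}
\]
by direct sums of copies of $C$. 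Applying both functors and using left exactness, $\eta_X$ is seen to be the kernel of a morphism between isomorphic maps, hence is itself an isomorphism. Naturality of $\eta$ in $X$ is immediate from the construction, and one checks easily that $\eta$ is compatible with the right $D$-coaction, so $\eta$ is in fact an isomorphism of right $D$-comodules natural in $X$.

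Finally, I would verify that the assignments $\Phi$ and $\Psi$ are themselves functorial (a morphism of bicomodules $M \to M'$ yields a natural transformation of cotensor functors, and a natural transformation $F \to F'$ yields a bicomodule morphism $F(C) \to F'(C)$) and that the natural isomorphisms above are compatible with these 2-cells, giving an equivalence of categories as claimed. The main technical point throughout is the compatibility of cotensor, tensor over $k$, and direct sums, and the isomorphism $F(C \otimes_k V) \cong V \otimes_k F(C)$ for arbitrary vector spaces $V$, which reduces to the coproduct-preserving property of $F$.
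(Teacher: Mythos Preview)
Your proposal is correct and follows essentially the same approach as the paper. The paper does not actually prove this theorem but cites it as \cite[Theorem~5.1]{Takeuchi77} (see also \cite{BrzezinskiWisbauwer03}); it then records exactly the correspondence you describe, namely $M \mapsto -\,\square_C M$ in one direction and $F \mapsto F(I)$ (with left $C$-coaction via $F(I) \to F(C \otimes_k I) \cong C \otimes_k F(I)$) in the other, so your argument simply supplies the details behind that citation.
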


Recall (Theorem \ref{Theorem:FiniteTypeClassification}) that $\AA$ and $\BB$ are equivalent to the categories of right $\Cohom(I,I)$ comodules and right $\Cohom(J,J)$ comodules, respectively.  The above correspondance between functors and bimodules is then given by mapping a $(C,D)$-bicomodule $M$ to the functor $- \square_C M$.  Here $- \square_C M: \Comod-C \to \Comod-D$ is the cotensor product (see \cite{BrzezinskiWisbauwer03}).  Conversely, given a left exact functor $F: \AA \to \BB$ which commutes with direct sums, the associated bicomodule is $F(I)$.  Note that $F(I)$ is a left $C$-comodule by $F(I) \to F(C \otimes_k I) \cong C \otimes_k F(I)$.

Thus in order to specify a left exact functor $\AA \to \BB$ which commutes with direct sums, we may give an object $M \in \BB$ and give it a left $C$-comodule structure.  This is equivalent to giving a coalgebra morphism $\varphi: E \to C$ where $E = \Cohom(M,M)$.

Indeed, such a morphism $\varphi: E \to C$ induces a coaction $M \to E \otimes_k M \stackrel{\varphi \otimes_k 1_M}{\longrightarrow} C \otimes_k M$.  Conversely given a coaction $M \to C \otimes_k M$, the universal property of the map $M \to E \otimes_k M$ gives a map $E \to C$ which we can check to be a coalgebra morphism.

In particular, a functor $F: \AA \to \BB$ which commutes with direct sums such that $F(I) = M$ induces a coalgebra morphism $E \to C$.  We will use the following corollary of Theorem \ref{theorem:CoalgebraFunctors}.

\begin{corollary}\label{corollary:bicomodules}
Let $\AA, \BB$ be Grothendieck categories of finite type.  Let $I$ be a quasi-finite injective cogenerator of $\AA$, and let $M$ be any quasi-finite object in $\BB$.  We will denote $C = \Cohom(I,I)$ and $D' = \Cohom(M,M)$.  For every coalgebra map $\varphi: D' \to C$ there is a left exact functor (unique up to natural equivalence) $F: \AA \to \BB$ which commutes with direct sums.
\end{corollary}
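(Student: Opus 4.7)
The plan is to realize $F$ via Theorem~\ref{theorem:CoalgebraFunctors} by equipping $M$ with a $(C,D)$-bicomodule structure, where $D=\Cohom(J,J)$ for some fixed quasi-finite injective cogenerator $J$ of $\BB$. Under Theorem~\ref{Theorem:FiniteTypeClassification}, $\BB\cong\Comod-D$, so $M$ carries a canonical right $D$-coaction $\rho_D:M\to M\otimes_k D$. On the other hand, because $M$ is quasi-finite, the universal arrow $\theta:M\to D'\otimes_k M$ of the $\Cohom$ adjunction endows $M$ with a canonical left $D'$-coaction, and these two structures are automatically compatible, so $M$ is in a natural way a $(D',D)$-bicomodule. (The compatibility is essentially a consequence of the universality of $\theta$ among maps of the form $M\to V\otimes_k M$.)

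The coalgebra map $\varphi:D'\to C$ now lets us corestrict scalars on the left: set
$$\rho := (\varphi\otimes_k 1_M)\circ\theta \;:\; M\to C\otimes_k M.$$
Counitality $(\epsilon_C\otimes 1_M)\circ\rho=1_M$ follows from $\epsilon_C\circ\varphi=\epsilon_{D'}$ together with the counit identity for $\theta$; coassociativity reduces to that of the $D'$-coaction via the identity $\Delta_C\circ\varphi=(\varphi\otimes\varphi)\circ\Delta_{D'}$; and the bicomodule axiom is inherited from the $(D',D)$-bicomodule structure on $M$ because $\varphi$ only touches the left tensor factor. Thus $M$ becomes a $(C,D)$-bicomodule.

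Applying Theorem~\ref{theorem:CoalgebraFunctors} to this $(C,D)$-bicomodule yields a left exact direct-sum-preserving functor $F:\AA\to\BB$, concretely $-\square_C M$ under the equivalences $\AA\cong\Comod-C$ and $\BB\cong\Comod-D$; uniqueness up to natural equivalence is the ``bijective'' part of the same theorem, since any two such functors produce isomorphic bicomodule structures on $M$ determined by the same $\varphi$. The one genuinely technical point is ensuring that the map built from $\theta$ is indeed a coaction of $D'=\Cohom(M,M)$, but this is Takeuchi's construction recalled in the remark preceding the Coalgebras subsection. Finally, the construction is independent of the auxiliary choice of $J$: another quasi-finite injective cogenerator gives a Morita--Takeuchi equivalent coalgebra $D$, and hence the same $F$ up to natural equivalence.
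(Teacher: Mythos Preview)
Your proof is correct and follows essentially the same approach as the paper: the paper does not give a separate proof environment for this corollary, but the two paragraphs immediately preceding it explain exactly the construction you carry out --- use the universal map $\theta:M\to D'\otimes_k M$ to make $M$ a left $D'$-comodule, push forward along $\varphi$ to obtain a left $C$-coaction, and then invoke Theorem~\ref{theorem:CoalgebraFunctors}. Your write-up simply fills in the verification of the comodule and bicomodule axioms and makes explicit the uniqueness and the independence of the auxiliary cogenerator $J$, which the paper leaves implicit.
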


Using the language of pseudocompact algebras, we find

\begin{corollary}\label{corollary:WhenEquivalent}
Let $\AA, \BB$ be Grothendieck categories of finite type.  Let $I$ be a quasi-finite injective cogenerator of $\AA$, and let $M$ be any quasi-finite object in $\BB$.  We will denote $A = \End(I)$ and $B' = \End(M)$ with the natural topologies.  For every continuous morphism $\varphi: A \to B'$ there is a left exact functor (unique up to natural equivalence) $F: \AA \to \BB$ which commutes with direct sums.
\end{corollary}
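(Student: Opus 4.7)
The plan is to deduce this corollary directly from Corollary \ref{corollary:bicomodules} by dualizing along $\bD$, which interchanges coalgebras and pseudocompact $k$-algebras. First I would identify $A$ and $B'$ with the pseudocompact algebras dual to the relevant coalgebras: by the remark following Corollary \ref{Corollary:FiniteTypeClassification}, the natural topology on $\End(I)$ is precisely the one coming from the isomorphism $A \cong \bD\Cohom(I,I) = \bD C$, and identically $B' \cong \bD\Cohom(M,M) = \bD D'$. (The quasi-finiteness of $I$ and $M$ guarantees that $\Cohom(I,I)$ and $\Cohom(M,M)$ exist and carry the coalgebra structure recalled earlier.)

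Next I would argue that the contravariant duality $\bD$ transports continuous $k$-algebra maps $\varphi: A \to B'$ to coalgebra morphisms $\bD\varphi: D' \to C$, and conversely. This is where one has to be a little careful: the duality $\bD$ was set up as a monoidal equivalence $\PC(k) \simeq (\Mod k)^\circ$ (with $V \otimes W = \bD(\bD W \otimes_k \bD V)$), so an algebra object in $\PC(k)$ corresponds, under $\bD$, to a coalgebra object in $\Mod k$, and morphisms of such structures correspond in the obvious way. Continuity of $\varphi$ is automatic from the fact that it lies in the image of $\bD$, and its compatibility with the algebra structure of $A$ and $B'$ translates exactly into compatibility of $\bD\varphi$ with the comultiplications and counits of $D'$ and $C$.

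Having produced the coalgebra map $\bD\varphi: D' \to C$ from $\varphi: A \to B'$, I would then invoke Corollary \ref{corollary:bicomodules} to obtain a left exact functor $F: \AA \to \BB$ commuting with direct sums, unique up to natural equivalence. Uniqueness of $F$ transfers along the duality as well, since the bijection between coalgebra morphisms $D' \to C$ and continuous algebra morphisms $A \to B'$ is bijective on the nose.

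The only genuine subtlety is checking that the bijection between continuous algebra morphisms and coalgebra morphisms is really implemented by $\bD$, i.e.\ that the monoidal duality $\bD$ is strong enough on morphisms to interchange multiplication diagrams with comultiplication diagrams in the required way. Once one has recorded that $\bD$ is a (contravariant) equivalence of monoidal categories and that both sides have finite-dimensional pieces (so no topological pathology arises), this is formal: the restatement is then just a dictionary translation of Corollary \ref{corollary:bicomodules}. I do not expect any real obstruction beyond making this translation explicit.
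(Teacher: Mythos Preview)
Your proposal is correct and follows exactly the paper's approach: the paper's proof is a one-line remark that this is a rephrasing of Corollary~\ref{corollary:bicomodules} obtained by replacing the coalgebra map $\Cohom(M,M)\to\Cohom(I,I)$ by its $\bD$-dual $\End(I)\to\End(M)$. Your write-up simply makes explicit the monoidal duality argument that the paper leaves implicit.
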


\begin{proof}
This is a rephrasing of Corollary \ref{corollary:bicomodules} obtained by replacing the coalgebra homomorphism $\Cohom(M,M) \to \Cohom(I,I)$ by the dual map $\End(I) \to \End(M)$.
\end{proof}

\subsection{Tubes}\label{subsection:Tubes}

Let $Q$ be an $\tilde{A}_n$-quiver with cyclic orientation.  The category $\AA = \nilp Q$ of finite dimensional nilpotent $k$-representations is an abelian hereditary Ext-finite uniserial length category with Serre duality.  We will call this category $\AA$ a \emph{tube}.

The Auslander-Reiten quiver of $\nilp \tilde{A}_n$ is of the form $\bZ A_\infty / \langle \t^{n+1} \rangle$ as in Figure \ref{fig:LargerTube}, where the peripheral objects correspond to the simple representations.

\begin{figure}[tb]
	\centering
		\includegraphics[totalheight=0.40\textheight]{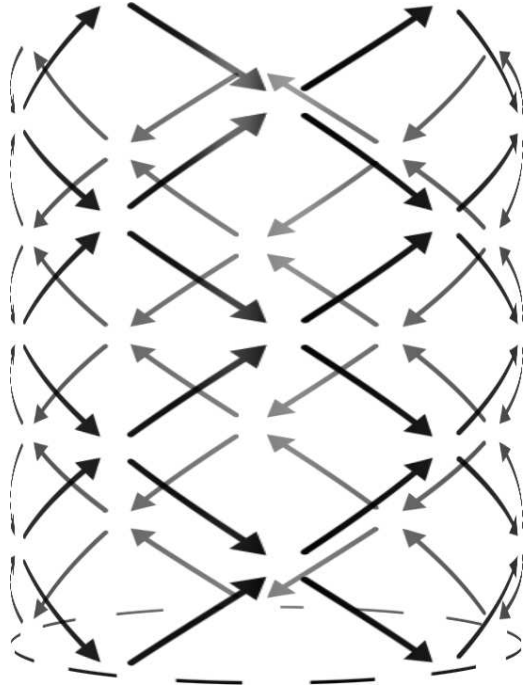}
	\caption{The Auslander-Reiten quiver of a standard tube}
	\label{fig:LargerTube}
\end{figure}

Since $\AA$ is a Hom-finite length category, $\Ind \AA$ is a Grothendieck category of finite type.  It follows from \cite{AmdalRingdal68} that the category $\Ind \AA$ has an injective cogenerator $I$ such that
$$A = \End I \cong \begin{pmatrix}
k[[x]] & xk[[x]] & \cdots & xk[[x]] & xk[[x]] \\
k[[x]] & k[[x]] & \cdots & xk[[x]] & xk[[x]]\\
\vdots & & \ddots & \vdots & \vdots \\
k[[x]] & k[[x]] &\cdots & k[[x]] &xk [[x]] \\
k[[x]] & k[[x]] &\cdots & k[[x]] &k [[x]]
\end{pmatrix}.$$
This is the completion of the path algebra of $\tilde{A}_n$ with cyclic orientation.  It follows from \cite{VanGastelVandenBergh97} that the topology is necessarily given by the product topology of all the entries in the matrix.  For each of the entries, the only pseudocompact topology is generated by the open sets $x^n k[[x]]$ for all $n \geq 1$.

If $\SS \subseteq \ind \AA$ is the set of all simple objects, then $I \cong \oplus_{S \in \SS} I(S)$ where $I(S)$ is an injective envelope of $S$.  Note that this injective cogenerator is multiplicity free and that this property determines $I$ up to isomorphism.

Another description of $\Ind \AA$ can be given using coalgebras (see \cite{{CuadraGomezTorrecillas04}}).  We can describe $\Cohom(I,I)$ as the path coalgebra of $\tilde{A}_n$, i.e. as a vector space $\Cohom(I,I)$ is generated by the paths in $\tilde{A}_n$, and the comultiplication and counit are given by
$$\Delta(a) = \sum_{pq = a} p \otimes q, \ \ \ \ \epsilon(a) = \left\{ \begin{array}{ll} 1 & \mbox{if $\operatorname{length} a = 0$} \\  0 & \mbox{if $\operatorname{length} a \not= 0$} \end{array} \right.$$
where $a$ is a path in $\tilde{A}_n$.  When $n = 0$, then $\Cohom(I,I)$ is isomorphic to the divided power coalgebra in one variable.

Note that $\End I$ is indeed dual to $\Cohom(I,I)$.
\section{Big tubes}\label{section:BigTubes}
A big tube is an infinite generalization of a tube as described in \S\ref{subsection:Tubes}.  We will define them by using an infinite version of a cyclic quiver $\tilde{A}_n$ called a big loop.  Big tubes can occur in the category of representations of certain thread quivers, an example will be provided in \S\ref{subsection:Threads}.  We start by fixing some definitions of representations of (small) preadditive categories.

\subsection{Representations of preadditive categories}\label{subsection:Representations}

Let $\aa$ be a small preadditive category.  A \emph{right $\aa$-module} is a contravariant functor from $\aa$ to $\Mod k$, the category of all vector spaces.  The category of all right $\aa$-modules is denoted by $\Mod \aa$.

With every object $A$ of $\aa$, we may associate a \emph{standard projective} $\aa(-,A)$ and a \emph{standard injective} $\aa(A,-)^*$.  It is clear that every finitely generated projective is a direct summand of a direct sum of standard projective.  If idempotents split in $\aa$ then every indecomposable projective is isomorphic to a direct sum of standard projectives, and finitely generated projectives are finite direct sums of standard projectives.  Dual notions hold for injective objects.

Let $M$ be in $\Mod(\frak{a})$. We will say that $M$ is \emph{finitely generated} if $M$ is a quotient object of a finitely generated projective object.  We say that  $M$ is \emph{finitely presented} if $M$ has a presentation
$$P\to Q\to M\to 0$$
where $P,Q$ are finitely generated projectives. It is easy to see that these notions coincides with the ordinary categorical ones.

Dually we will say that $M$ is \emph{cofinitely generated} if it is contained in a cofinitely generated injective. \emph{Cofinitely presented} is defined in a similar way.

The full subcategory of $\Mod \aa$ consisting of all objects which are finitely presented will be denoted by $\mod \aa$.  The full subcategory of $\Mod \aa$ with objects which are both finitely presented and cofinitely presented will be denoted by $\modc \aa$.

With an indecomposable object $A \in \ind \aa$, we may associate in a straightforward way the \emph{standard simple} object $S_A$ as $\aa(-,A) / \rad(-,A)$ where $\rad(-,-)$ is the usual radical.

A preadditive category $\aa$ will be called \emph{semi-hereditary} if $\mod \aa$ is abelian and hereditary.  The following theorem (\cite{vanRoosmalen06}, see also \cite{AuslanderReiten75}) characterizes semi-hereditary categories.

\begin{theorem}\label{theorem:SemiHereditary}
Let $\frak{a}$ be a small preadditive category such that any full subcategory of $\frak{a}$ with a finite number of objects is semi-hereditary. Then $\frak{a}$ is itself semi-hereditary.
\end{theorem}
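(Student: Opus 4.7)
I would prove Theorem \ref{theorem:SemiHereditary} by realizing $\mod \frak{a}$ as a filtered 2-colimit of the categories $\mod \frak{b}$, where $\frak{b}$ ranges over the (filtered) poset $\Lambda$ of finite full subcategories of $\frak{a}$, and then invoking Propositions \ref{proposition:AbelianColimit} and \ref{proposition:FilteredColimit}. For each inclusion $i_{\lambda\mu}: \frak{b}_\lambda \hookrightarrow \frak{b}_\mu$ I take as transition functor the left Kan extension $\Phi_{\lambda\mu} := (i_{\lambda\mu})_!: \Mod \frak{b}_\lambda \to \Mod \frak{b}_\mu$. Since $i_{\lambda\mu}$ is fully faithful, so is $\Phi_{\lambda\mu}$; and since Kan extension sends the standard projective $\frak{b}_\lambda(-,A)$ to $\frak{b}_\mu(-,A)$, it restricts to a fully faithful functor $\mod \frak{b}_\lambda \to \mod \frak{b}_\mu$.

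The main technical point is to check that each $\Phi_{\lambda\mu}$ is exact, and this I expect to be the main obstacle. Right exactness is automatic (it is a left adjoint), but left exactness relies on the semi-hereditary hypothesis on each $\frak{b}_\lambda$. The key observation is that in the hereditary abelian category $\mod \frak{b}_\lambda$ every subobject of a finitely generated projective is itself projective, hence a direct summand of the ambient projective. Using the horseshoe lemma, any short exact sequence $0 \to M_1 \to M_2 \to M_3 \to 0$ in $\mod \frak{b}_\lambda$ therefore admits compatible length-one projective resolutions in which the kernel inclusions are inclusions of direct summands. Since $\Phi_{\lambda\mu}$ preserves direct summands, idempotents, and cokernels of maps between standard projectives, it preserves the exactness of such resolutions, whence it is exact on $\mod \frak{b}_\lambda$.

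Once exactness of the transitions is in hand, the rest is formal. Every $M \in \mod \frak{a}$ has a finite presentation involving only finitely many objects and morphisms of $\frak{a}$, so $M$ is isomorphic to $\sigma_\lambda(M_\lambda)$ for some $\lambda \in \Lambda$ and $M_\lambda \in \mod \frak{b}_\lambda$, and similarly any morphism between two such modules lifts to some enlarged $\frak{b}_\mu$. By Proposition \ref{proposition:FilteredColimit} this yields an equivalence $\mod \frak{a} \simeq 2\colim_\lambda \mod \frak{b}_\lambda$, and Proposition \ref{proposition:AbelianColimit} then implies that $\mod \frak{a}$ is abelian. For heredity, given any $M \simeq \sigma_\lambda(M_\lambda)$, I pick a length-one projective resolution of $M_\lambda$ in $\mod \frak{b}_\lambda$ (available by semi-heredity of $\frak{b}_\lambda$) and apply the exact functor $\sigma_\lambda$, which preserves projectives because it sends standard projectives to standard projectives and preserves direct summands, to obtain a length-one projective resolution of $M$ in $\mod \frak{a}$.
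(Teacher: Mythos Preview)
The paper does not prove this theorem; it is quoted from \cite{vanRoosmalen06} and \cite{AuslanderReiten75}, so there is no proof here to compare against. Your overall plan---realizing $\mod\frak{a}$ as the filtered $2$-colimit of the $\mod\frak{b}_\lambda$ via left Kan extension and then invoking Propositions~\ref{proposition:AbelianColimit} and~\ref{proposition:FilteredColimit}---is reasonable, but the exactness step contains a genuine error. You write that in the hereditary category $\mod\frak{b}_\lambda$ ``every subobject of a finitely generated projective is itself projective, hence a direct summand of the ambient projective.'' The first clause is correct, the second is false: a projective subobject need not split off. Already for $\frak{b}_\lambda=kA_2$ (linear orientation $1\to 2$) the simple projective $P_2$ sits as a non-split subobject of the indecomposable projective $P_1$. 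Hence the length-one resolutions you build via the horseshoe lemma do \emph{not} have split kernel inclusions, and your argument that $\Phi_{\lambda\mu}$ preserves their exactness collapses.

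The exactness of $\Phi_{\lambda\mu}$ can be salvaged, but one must use heredity of the \emph{target} $\mod\frak{b}_\mu$, not only of the source. Given $0\to Q\to P\to M\to 0$ in $\mod\frak{b}_\lambda$ with $Q,P$ projective, set $K=\ker(\Phi_{\lambda\mu}Q\to\Phi_{\lambda\mu}P)$. Since $\mod\frak{b}_\mu$ is hereditary the image of this map is projective, so the sequence $0\to K\to\Phi_{\lambda\mu}Q\to\operatorname{im}\to 0$ splits and $K$ is a direct summand of $\Phi_{\lambda\mu}Q$. By Yoneda and fullness of $\frak{b}_\lambda\subset\frak{b}_\mu$ the corresponding idempotent on $\Phi_{\lambda\mu}Q$ is the image under $\Phi_{\lambda\mu}$ of an idempotent on $Q$, so $K\cong\Phi_{\lambda\mu}K'$ for some summand $K'$ of $Q$; applying the (exact) restriction $i_{\lambda\mu}^*$ gives $K'\cong i_{\lambda\mu}^*K=\ker(Q\to P)=0$, whence $K=0$. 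With exactness established this way, the remainder of your argument goes through.
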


Note that if $\aa$ is semi-hereditary, then so is the opposite category.  We find that $\modc \aa$ is abelian and hereditary when $\aa$ is semi-hereditary.

For a quiver $Q$, we will write $kQ$ for the associated additive $k$-linear path category, and define $\rep Q$ and $\repc Q$ to be the categories $\mod kQ$ and $\modc kQ$.  Similar conventions hold for a poset $\LL$.

\subsection{Construction of big loops and tubes}

We start with the definition of a big loop.  If $\LL$ is a linearly ordered (small) set, then we may define a (small) category $\LL^\bullet$ where the object set is given by elements of $\LL$, the morphisms by

$$\Hom_{\LL^\bullet} (i,j) = \left\{
\begin{array}{ll}
\bN & \mbox{if $i \leq j$} \\
\bN \setminus \{0\} & \mbox{if $i > j$}
\end{array} \right.$$

and where the composition is given by addition.  Note that the identity morphism in $\Hom_{\LL^\bullet}(i,i)$ is given by $0 \in \bN$.  Also, the category $\LL^\bullet$ is not $k$-linear.

The (additive) linearization $k\LL^\bullet$ of the above category may be described as follows: the objects are formal (finite) direct sums of objects of $\LL^\bullet$ and the morphisms are given by

$$\Hom_{k\LL^\bullet} (i,j) = \left\{
\begin{array}{ll}
k[x] & \mbox{if $i \leq j$} \\
xk[x] & \mbox{if $i > j$}
\end{array} \right.$$

The objects of the path completed category $\widehat{k\LL^\bullet}$ are the same as those of $k\LL^\bullet$, while the morphisms are given by

$$\Hom_{\widehat{k\LL^\bullet}} (i,j) = \left\{
\begin{array}{ll}
k[[x]] & \mbox{if $i \leq j$} \\
xk[[x]] & \mbox{if $i > j$}
\end{array} \right.$$

Composition is given by multiplication.  If $\LL$ is locally discrete without a minimum or a maximum, thus if every element in $\LL$ has a direct predecessor and successor, then the additive category $\widehat{k \LL^\bullet}$ is called a \emph{big loop}.

If $\aa$ is a big loop, then we will call the category $\modc \aa$ of finitely presented and cofinitely presented modules (cfr. \ref{subsection:Representations}) a \emph{big tube}.  Theorem \ref{theorem:SemiHereditary} implies that this category is abelian and hereditary. 

\begin{remark}
We have $k\tilde{A}_{n} \cong kA_{n+1}^\bullet$ (where the quiver $A_{n+1}$ has linear orientation) and $\nilp k\tilde{A}_{n} \cong \modc \widehat{k\tilde{A}_{n}}$.  However, with the above definitions, $k\tilde{A}_{n}$ is not a big loop and hence $\nilp \tilde{A}_n$ is not a big tube.
\end{remark}

\begin{remark}
The category $\LL^\bullet$ has been called $\LL_{cyc}$ in \cite{Drinfeld04}.  There is a natural action of the monoid $\bN$ on $\End 1_{\LL^\bullet}$ where $1_{\LL^\bullet}$ is the identity functor $\LL^\bullet \to \LL^\bullet$.  A category together with this action has been called a $\bZ_+$-category in \cite{Drinfeld04}.  Likewise, $k\LL^\bullet$ and $\widehat{k\LL^\bullet}$ can also be endowed with the structure of a $\bZ_+$-category in a natural way.
\end{remark}

\subsection{Description of a big tube}

We will now discuss the objects and morphisms occurring in such a big tube.  Since every object in $\modc \aa$ is finitely presented, it suffices to discuss the objects and morphisms of $\modc \bb$ for a well-chosen preadditive subcategory $\bb$ of $\aa$ with finitely many (indecomposable) objects.  In this case, $\modc \bb \cong \nilp \tilde{A}_n$.  Note that this implies that $\modc \aa$ is an Ext-finite abelian category (and hence Krull-Schmidt).

The indecomposable objects of $\nilp \tilde{A}_n$ are easily understood.  The simple objects are the standard simples, thus with every vertex $x$ of $\tilde{A}_n$ we associate the simple representation $S$ by $S(x) \cong k$, $S(y) \cong 0$ when $x \not= y$, and $S(\alpha)=0$ for every arrow $\alpha$ in $\tilde{A}_n$.  An indecomposable nilpotent module $M$ is uniquely determined by a simple top $T$, a simple socle $S$, and a \emph{winding number} $n \in \bN$ where $n = \dim \Hom(M,M)-1$.  Thus an indecomposable object is simple if and only if the top and the socle are isomorphic and the winding number is 0.

Likewise, in $\modc \aa$, the simple representations are given by the standard simples, and every $M \in \Ob \modc \aa$ is uniquely determined by a simple socle $S$, a simple top $T$, and a \emph{winding number} $n \in \bN$ where $n = \dim \Hom(M,M)-1$.  A module $M$ with above properties will be written as $M(s,t;n)$ where $s,t$ are indecomposable objects of $\aa$ with $S(s) \not= 0$ and $T(t) \not= 0$.  Note that the object $M(s,t;n)$ is the image of a map $\aa(-,t) \to \aa(s,-)^*$.

The category $\modc \aa$ has Serre duality.  Indeed, since it has no projectives or injectives, it suffices to check it has almost split sequences \cite[Theorem A]{ReVdB02}.  Let $M(s,t;n)$ be an indecomposable module.  It is straightforward to check that
$$\t M(s,t;n) = M(s-1,t-1;n)$$
does indeed define an Auslander-Reiten translate, where $s-1$ and $t-1$ are the direct predecessors of $s$ and $t$, respectively.

All irreducible maps are one of the following form
$$\begin{array}{rcll}
M(s,t;n) &\to& M(s+1,t;n)&\mbox{with $s+1 \not= t$} \\
M(s,t;n) &\to& M(s,t+1;n)&\mbox{with $s\not= t+1$} \\
M(s-1,s;n) &\to& M(s,s;n+1)& \\
M(s,s-1;n+1) &\to& M(s,s;n)&
\end{array}$$
where $s,t \in \Ob \aa$ and $n \in \bN$.  Every indecomposable, not of the form $M(s,s;0)$, has thus two direct successors and two direct predecessors; modules of the form $M(s,s;0)$ are simple and have only one direct successor and one direct predecessor.  The Auslander-Reiten components are thus of the form $\bZ A_\infty$ if the component has simple objects, and of the form $\bZ A_\infty^\infty$ otherwise.  We may sketch this situation as in Figure \ref{figure:BigTube} where we follow the conventions of \cite{Ringel02, Ringel02b, vanRoosmalen06} and draw Auslander-Reiten components of the form $\bZ A_\infty$ and $\bZ A_\infty^\infty$ as triangles and squares, respectively.

\begin{figure}[p]
	\centering
		\includegraphics[totalheight=.95\textheight]{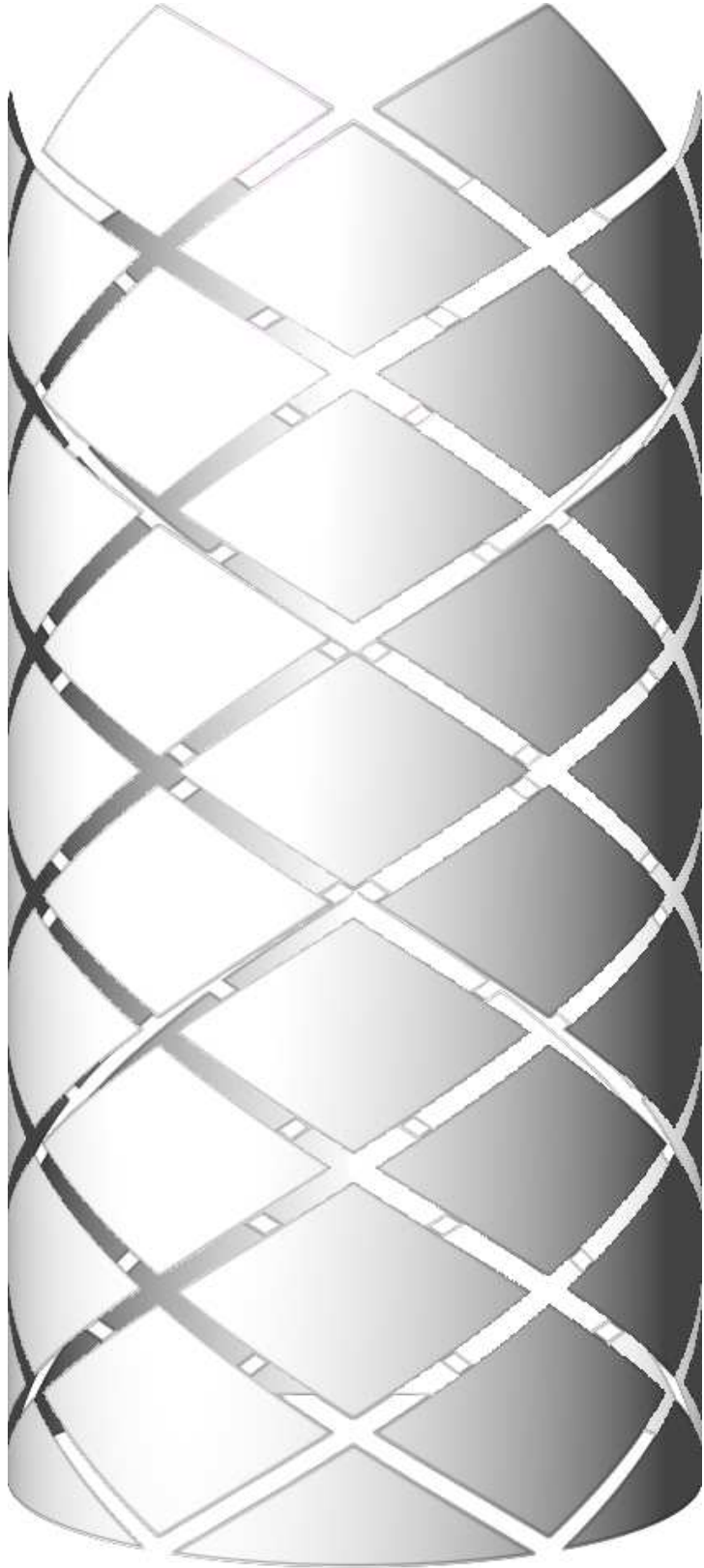}
	\caption{Sketch of the Auslander-Reiten quiver of a big tube}
	\label{figure:BigTube}
\end{figure}

\begin{remark}
Whereas an ordinary tube is a single component in the Auslander-Reiten quiver, in contrast a big tube is the union of an infinite number of components.  The objects of finite length lie in the $\bZ A_\infty$-components and the objects of infinite length lie in the $\bZ A_\infty^\infty$-components.
\end{remark}

\subsection{Big tubes in the representation of thread quivers}\label{subsection:Threads}

A big tube can also occur as a subcategory of an abelian Ext-finite hereditary category $\AA$ with Serre duality (such that the embedding commutes with the Auslander-Reiten translate), more specifically when $\AA$ is the category of finitely presented representations of a thread quiver (we refer the reader to \cite{BergVanRoosmalen09} for more information about thread quivers, although this section can be read independently).  We will discuss an example of this.

Denote by $Q_n$ ($n \in \bN$) the quiver
$$\xymatrix@R=5pt{&&&&&&&\\ 
a_0 \ar[r] \ar `u /4pt[urrrrrrr] `/4pt[rrrrrrr] [rrrrrrr] & a_1 \ar@{..}[r] & a_{n-1} \ar[r] & a_{n} \ar[r]& b_n \ar[r]& b_{n-1} \ar@{..}[r]& b_1 \ar[r]& b_0}$$
Note that $Q_0$ is the Kronecker quiver.  There is an obvious functor $f:kQ_n \to \widehat{k\tilde{A}_{2n-2}}$ mapping $a_0$ and $b_0$ to the same indecomposable in $k\tilde{A}_{2n-2}$, mapping the radical maps in the lower branch of $kQ_n$ to radical maps in $k\tilde{A}_{2n-2}$, and mapping the morphism corresponding to the upper arrow in $Q_n$ to the identity of $F(a_0) = F(b_0)$.  Thus if the upper arrow and the lower branch correspond to the morphisms $\alpha, \beta \in \Hom_{kQ_n}(a_0,b_0)$, then $F(\alpha) = 1_{F(a_0)}$ and $F(\beta)$ is a generator of the unique maximal ideal in $\Hom_{\widehat{k\tilde{A}_{2n-2}}}(F(a_0),F(a_0)) \cong k[[x]]$. 

The functor $f:kQ_n \to \widehat{k\tilde{A}_{2n-2}}$ induces a restriction functor $\modc \widehat{k\tilde{A}_{2n-2}} \to \mod kQ_n$.  The essential image of this functor is given by all objects $M$ of $\mod kQ_n$ where $M(\alpha)$ is an isomorphism and $M(\alpha)^{-1} \circ M(\beta)$ is nilpotent.  One verifies that the essential image is the tube in $\mod kQ_n$ containing the module $M$ with $M(a_0) = M(b_0) = k$, $M(\alpha) = 1$, and $M(\beta) = 0$.  If $n \not= 0$, then this is the unique nonhomogeneous tube in $\mod kQ_n$.

Denote by $f_n:kQ_n \to kQ_{n+1}$ the embedding implied by the numbering of the vertices.  This gives a filtered system and the 2-colimit (in the 2-category of small categories) is the path category $kQ$ of the thread quiver $Q=\xymatrix@1{a\ar@<-2pt>@{..>}[r]\ar@<2pt>[r]&b}$ (see \cite{BergVanRoosmalen09}).  The indecomposable objects in $kQ$ are denoted by $a_i$ and $b_i$ (for all $i \in \bN$) in a natural way.

As before, there is an obvious functor $kQ \to \widehat{k\bZ^\bullet}$ mapping $a_i$ to $i$ and $b_i$ to $-i$.  This functor induces a restriction functor $\modc \widehat{k\bZ^\bullet} \to \mod kQ$.  The description of the essential image is similar to the above description.

To see that the restriction functor commutes with Auslander-Reiten translations, we offer an alternative description of the above restriction functor.  Consider the following commutative diagram
$$\xymatrix{
kQ_0 \ar[r] \ar[d]& kQ_1 \ar[r]\ar[d]& kQ_2 \ar[r]\ar[d] & \cdots \\
\widehat{k\tilde{A}_0} \ar[r] & \widehat{k\tilde{A}_2} \ar[r] & \widehat{k\tilde{A}_4} \ar[r] & \cdots 
}$$

The 2-colimit of the upper sequence is $kQ$, of the lower sequence is $\widehat{k\bZ^\bullet}$, and the diagram induces the functor $f:kQ \to \widehat{k\bZ^\bullet}$.  We find the following sequence

$$\xymatrix{
\mod kQ_0 \ar[r] & \mod kQ_1 \ar[r]& \mod kQ_2 \ar[r] &\cdots \ar[r]& \mod kQ \\
\modc \widehat{k\tilde{A}_0} \ar[r]\ar[u] & \modc \widehat{k\tilde{A}_2} \ar[r]\ar[u] & \modc \widehat{k\tilde{A}_4} \ar[r]\ar[u] & \cdots \ar[r]& \modc \widehat{k\bZ^\bullet} \ar@{-->}[u]
}$$
where the functors in the top and bottom row are given by tensor products, and are hence exact (\cite{BergVanRoosmalen09}).  Since all objects in $\mod kQ$ are finitely presented, it is follows that the 2-colimit of the top row is $\mod kQ$.  Likewise, the 2-colimit of the bottom row is $\modc \widehat{k\bZ^\bullet}$.  The solid vertical arrows are restriction functors discussed above and hence commute with Auslander-Reiten translations.  The dashed arrow is given by the universal property of 2-colimits and is thus so that the diagram commutes (up to natural equivalence).

The Auslander-Reiten translate of an indecomposable $M(k,l;n)$ in the big tube $(k,l,n \in \bZ)$ is given by $M({k-1},{l-1};n)$.  We find an $m \in \bN$ such that these two indecomposables lie in the subcategory $\modc \widehat{k\tilde{A}_{2m-2}}$ of $\widehat{k\bZ^\bullet}$.  Since the diagram essentially commutes, this shows that the restriction $\modc \widehat{k\bZ^\bullet} \to \mod kQ$ commutes with the Auslander-Reiten translation.

\begin{remark}\label{remark:InfiniteWeight}
The previous example may be interpreted as (being derived equivalent to) a weighted projective line (\cite{GeigleLenzing87}) of weight type $(\infty)$, thus there is one point with weight $\infty$; the other points have weight one.  This notation is not unambiguous as one could replace $\bZ$ by any other (larger) linearly ordered locally discrete set.
\end{remark}
\section{Paths in hereditary categories}

In this section, we will assume the reader is familiar with the concept of a bounded derived category of an abelian category.

Let $\AA$ be an Ext-finite abelian category, so that the bounded derived category $\Db \AA$ is a Krull-Schmidt category.  A \emph{suspended path} in $\Db \AA$ is a sequence $X_0, X_1, \ldots, X_n$ of indecomposable objects such that for all $i=0, 1, \ldots, n-1$ we have $\Hom(X_i,X_{i+1}) \not= 0$ or $X_{i+1} \cong X_i[1]$.  If $\AA$ is indecomposable and not semi-simple, then it follows from \cite{Ringel05} that there is a suspended path from $X$ to $Y$ in $\Db \AA$ if and only if there is a path from $X$ to $Y$.  More generally (\cite[Lemma 5]{Ringel05}), $\AA$ is indecomposable if and only if for every $X,Y \in \ind \Db \AA$ there is an $n \in \bZ$ and a suspended path from $X$ to $Y[n]$.  In this case, $\Db \AA$ is also called a \emph{block}.

It was shown in \cite[Theorem 2.5]{HappelZacharia08} that any suspended path from $A$ to $B$ in $\Db \AA$ reduces itself to a path $A \to X \to B[n]$ for some $n \leq 0$.  If we assume that $\AA$ is hereditary, then we have the following consequence.

\begin{theorem}\label{theorem:Paths}
Let $\AA$ be an indecomposable Ext-finite hereditary category, and let $A,B \in \Ob \AA$ be indecomposables.  There is an unoriented path of length at most two from $A$ to $B$.  If there is an oriented path from $A$ to $B$, then there is an oriented path from $A$ to $B$ of length at most two.
\end{theorem}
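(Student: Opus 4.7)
My plan is to reduce both statements to a two-step oriented path in $\Db\AA$ via Ringel's indecomposability criterion (\cite[Lemma~5]{Ringel05}) and the Happel--Zacharia reduction (\cite[Theorem~2.5]{HappelZacharia08}), and then to convert the residual extension information into honest morphisms in $\AA$. Concretely, Ringel produces, for any $A,B\in\ind\AA$, a suspended path $A\to\cdots\to B[n]$ in $\Db\AA$ for some $n\in\bZ$, and Happel--Zacharia collapses it to a path $A\to X\to B[k]$ in $\Db\AA$ with $X$ indecomposable, both $\Hom$'s nonzero, and $k\leq n$. Heredity then forces $X\in\AA[j]$ with $j\in\{0,1\}$ and $k-j\in\{0,1\}$, leaving only the four configurations $(j,k)\in\{(0,0),(0,1),(1,1),(1,2)\}$.

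The workhorse of the argument is the following sublemma: if $0\to Y\to E\to X\to 0$ is a non-split short exact sequence in $\AA$ with $X,Y$ indecomposable, then for every indecomposable summand $E_i$ of $E$ one has $\Hom(Y,E_i)\neq 0$ and $\Hom(E_i,X)\neq 0$. Indeed, a zero component $Y\to E_i$ would let $Y$ embed in the complementary summand, exhibiting $E_i$ as a direct summand of $X$, hence either $E_i=0$ (excluded) or the sequence splits (also excluded); the opposite component is handled dually.

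For part (b), an oriented path in $\AA$ is already suspended in $\Db\AA$, so Happel--Zacharia yields $A\to X\to B[k]$ with $k\leq 0$, and combining $k\in\{j,j+1\}\subseteq\{0,1,2\}$ with $k\leq 0$ and $j\geq 0$ forces $j=k=0$, producing an oriented path $A\to X\to B$ in $\AA$. For part (a), the semi-simple indecomposable case forces $A=B$ (the only indecomposable of the category); otherwise the four-case analysis applies. Case $(0,0)$ is immediate. In case $(0,1)$, where $\Hom(A,X)\neq 0$ and $\Ext^1(X,B)\neq 0$, I take a non-split SES $0\to B\to E\to X\to 0$ and apply $\Hom(A,-)$: either $\Ext^1(A,B)\neq 0$, and the sublemma applied to a non-split SES $0\to B\to E'\to A\to 0$ supplies the needed intermediate, or the connecting map vanishes and a chosen nonzero $A\to X$ lifts to $A\to E$ with a nonzero component $A\to E_i$, which combined with $\Hom(B,E_i)\neq 0$ from the sublemma yields the length-two path $A\to E_i\leftarrow B$. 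Case $(1,1)$ is dual, using $\Hom(-,B)$ on the SES coming from $\Ext^1(A,X')\neq 0$.

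The main obstacle is case $(1,2)$, where $\Ext^1(A,X')\neq 0$ and $\Ext^1(X',B)\neq 0$ simultaneously. Here I plan to pick a non-split SES $0\to B\to E''\to X'\to 0$ and apply $\Hom(A,-)$; because $\AA$ is hereditary, $\Ext^2(A,B)=0$ and the long exact sequence terminates, yielding a surjection $\Ext^1(A,E'')\twoheadrightarrow\Ext^1(A,X')$, so some indecomposable summand $E''_k$ satisfies $\Ext^1(A,E''_k)\neq 0$, while $\Hom(B,E''_k)\neq 0$ by the sublemma. Choosing a non-split SES $0\to E''_k\to F\to A\to 0$ and composing the monic inclusion $E''_k\hookrightarrow F$ with a nonzero map $B\to E''_k$ produces a nonzero map $B\to F$, which has a nonzero component $B\to F_l$ into some indecomposable summand; the sublemma then also supplies $\Hom(F_l,A)\neq 0$, giving the oriented length-two path $B\to F_l\to A$ in $\AA$. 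This case is the crux since it is the only one requiring an iterated use of the sublemma together with the essential vanishing of $\Ext^2$ to transport an extension class along a short exact sequence.
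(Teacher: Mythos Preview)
Your proof is correct and follows the same strategy as the paper: use Ringel's block criterion and the Happel--Zacharia reduction to land on a two-step path $A\to X'[j]\to B[k]$ with $(j,k)\in\{(0,0),(0,1),(1,1),(1,2)\}$, then resolve each case by splicing short exact sequences. Your explicit sublemma is exactly what the paper uses implicitly (e.g.\ the clause ``note that $\Hom(B,Y')\neq 0$ for every direct summand $Y'$ of $Y$''). The only substantive difference is in case $(1,2)$: the paper first forms $0\to X'\to Y\to A\to 0$, pushes $\Ext^1(X',B)\neq 0$ to some summand $Y'$ via heredity, takes a further extension $0\to B\to Z\to Y'\to 0$, and thus obtains an oriented path $B\to Z'\to Y'\to A$ of length \emph{three}, which it then shortens by invoking the already-established oriented-path statement; you instead compose a nonzero $B\to E''_k$ with the monic $E''_k\hookrightarrow F$ and project to a summand $F_l$, reaching a length-two path directly. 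This is a mild streamlining rather than a different method. A minor bookkeeping point: the paper disposes of the trivial cases $\Hom(A,B),\Hom(B,A),\Ext^1(A,B),\Ext^1(B,A)\neq 0$ at the outset, so in its $(0,1)$ case it can assume $\Ext^1(A,B)=0$ and lift immediately, whereas you branch on whether $\Ext^1(A,B)$ vanishes; both are fine.
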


\begin{proof}
The statement is trivial if $\Hom(A,B) \not= 0$, $\Hom(B,A) \not= 0$, $\Ext(A,B) \not= 0$ and $\Ext(B,A) \not= 0$.  We will exclude these cases.

It is well-known that every object in the bounded derived category $\Db \AA$ of $\AA$ is isomorphic to a direct sum of its homologies.  In particular, we find that there cannot be a path from $A$ to $B[-n]$ for $n>0$ (see \cite{Ringel05}).

Since $\AA$ is indecomposable, we know (by \cite{Ringel05}) that there is a path from $A$ to $B[n]$ in $\Db \AA$ for a certain $n \gg 0$.  Using \cite{HappelZacharia08} this reduces to a path $A \to X[k] \to B[l]$ where $l \leq n$ and where $X \in \Ob \AA$ is indecomposable.  Since $\AA$ is hereditary, we find that $0 \leq k \leq 1$ and $0\leq l \leq k+1 \leq 2$.

The case where $k=l=0$ is trivial; a path of length two is then given by $A \to X \to B$.  If there is a path from $A$ to $B$, then we are in this case.

Assume now $l = 1$.  We will furthermore assume $k=0$, the case where $k=1$ is analogous.  In this case $\Hom(A,X) \not= 0$ and $\Ext(X,B) \not= 0$.  Let $0 \to B \to Y \to X \to 0$ be a nonsplit short exact sequence and note that $\Hom(B,Y') \not= 0$ for every direct summand $Y'$ of $Y$.  Applying the functor $\Hom(A,-)$, and using that $\Ext(A,B) = 0$ and $\Hom(A,X) \not= 0$, we find that $\Hom(A,Y) \not= 0$.  Thus there is an indecomposable direct summand $Y'$ of $Y$ and nonzero morphisms $A \to Y'$ and $B \to Y'$.  This gives an unoriented path between $A$ and $B$ of length two.

The last case we need to consider is where $l=2$ and thus $k=1$.  Since $\Ext(A,X) \not=0$ we may consider a nonsplit exact sequence $0 \to X \to Y \to A \to 0$.  Thus $\Hom(Y',A) \not= 0$ for every indecomposable direct summand $Y'$ of $Y$.  From $\Ext(X,B) \not= 0$, we obtain that $\Ext(Y,B) \not= 0$.  In particular, $\Ext(Y',B) \not= 0$ for some indecomposable direct summand $Y'$ of $Y$.  Consider a nonsplit exact sequence $0 \to B \to Z \to Y' \to 0$.  We have thus a path from $B$ to $A$ in $\AA$.  As before, this path can be shortened to a path of length at most two.
\end{proof}
\section{Some properties of uniserial categories}\label{section:Properties}

An Ext-finite abelian category $\AA$ is said to be \emph{uniserial} if, for every $X \in \ind \AA$, the subobjects of $X$ are linearly ordered.  This property is self-dual, thus the dual of an abelian hereditary uniserial category with Serre duality is again an abelian hereditary uniserial category with Serre duality.

An object $S \in \ind \AA$ is called \emph{peripheral} if there is an Auslander-Reiten sequence starting or ending at $S$ with an indecomposable middle term.  In particular, projective-injective objects are never peripheral.

In this section, we shall establish some facts about uniserial categories which we will need to complete the proof of the classification (Theorem \ref{theorem:Classification}).  Our main result in this section is Proposition \ref{proposition:FiniteShape} which states that every hereditary uniserial category with Serre duality is --in some way-- built up from length categories.  The proof of Proposition \ref{proposition:FiniteShape} goes in different steps.  We will start this section by proving that every indecomposable object $X$ has a simple top and simple socle, and then use these simple objects to construct (perpendicular) subcategories.

\begin{proposition}\label{proposition:Uniserial}
Let $\AA$ be an indecomposable Ext-finite abelian hereditary uniserial category with Serre duality.  Then the following hold:
\begin{enumerate}
\item All peripheral objects are simple.
\item Every object has a simple top and a simple socle.
\item The middle term of every Auslander-Reiten sequence has at most two direct summands.
\item If $X \to M_1 \oplus M_2$ is a left almost split map, then $X \to M_1$ is an epimorphism and $X \to M_2$ is a monomorphism, or vice versa.
\end{enumerate}
If $\AA$ is not semi-simple, then the simple objects are exactly the peripheral objects.
\end{proposition}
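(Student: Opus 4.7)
The plan is to prove the four assertions in sequence, extracting (2) first as the engine for the rest, then deducing (1), then (3) and (4) together, and finally the closing addendum. The overarching principle is that uniseriality of an indecomposable $X$ forces every nonzero subobject and every nonzero quotient of $X$ to itself be indecomposable (a decomposition $Y_1\oplus Y_2$ of a subobject would place two incomparable nonzero subobjects inside the chain of subobjects of $X$). Combined with the AR sequences provided by Serre duality and the path result of Theorem~\ref{theorem:Paths}, this is the basic toolkit.

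For (2) I work with simple socles; the simple top then follows by the self-duality of uniserial hereditary categories with Serre duality. Uniqueness is immediate from uniseriality (two distinct simple subobjects would be incomparable). For existence I argue that for a non-projective indecomposable $X$, any proper subobject $Y\subsetneq X$ embeds into the middle term $M$ of the AR sequence $0\to\tau X\to M\to X\to0$: the inclusion $Y\hookrightarrow X$ is not a split epimorphism, so it factors through the right almost split map $M\to X$, and the factoring $Y\to M$ is forced to be a monomorphism. A descending chain of subobjects of $X$ therefore propagates to $M$, and an induction on the number of indecomposable summands of the AR middles reduces existence of a simple socle in $X$ to the peripheral case, handled alongside (1). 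Indecomposable projectives come with a simple top (and dually injectives with a simple socle) by the very definition of Serre duality quoted in the preliminaries.

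For (1), let $S$ be peripheral; by self-duality I may assume the AR sequence $0\to\tau S\to M\to S\to0$ has indecomposable middle $M$. Suppose $S$ is not simple, so by (2) it has a proper simple subobject $S_0$. The inclusion $S_0\hookrightarrow S$ is not a split epimorphism, hence factors as $S_0\xrightarrow{j} M\to S$ with $j$ a monomorphism. Then $M$ is indecomposable and contains the two nonzero subobjects $\tau S$ and $j(S_0)$; uniseriality of $M$ and simplicity of $S_0$ force $j(S_0)\subseteq\tau S$, whence the composition $S_0\to M\to S$ vanishes, contradicting that it equals the nonzero inclusion. For (3) and (4) I analyze the AR sequence $0\to X\to M_1\oplus\cdots\oplus M_r\to\tau^{-1}X\to 0$ with components $f_i:X\to M_i$: uniseriality of $X$ makes $\{\ker f_i\}$ a chain, and injectivity of the left map forces the minimum of this chain to be zero, so some $f_{i_0}$ is a monomorphism; a dual analysis of the components $M_i\to\tau^{-1}X$ identifies one as an epimorphism, and any further summand is ruled out by examining how its image interacts with the simple top of $X$ from (2), giving $r\leq 2$ together with the dichotomy of (4).

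The closing addendum follows: (1) gives peripheral $\Rightarrow$ simple. Conversely, for a simple $S$ in a non semi-simple $\AA$, parts (3)/(4) applied to the AR middle through $S$ rule out multiple indecomposable summands (each component $M_i\to S$ into the simple $S$ would have to be either zero or a split epimorphism, incompatible with the AR property unless $\AA$ is semi-simple), whence $S$ is peripheral. The step I expect to be the main obstacle is the existence part of (2): indecomposables in this setting may have infinite length (as in the big tubes of Section~\ref{section:BigTubes}), so uniseriality alone does not provide a minimal nonzero subobject, and taming the chain requires the AR sequence together with the induction bottoming out in the peripheral case of (1). The other parts then reduce to case analyses that are routine once the AR machinery and the linear order on subobjects are in hand.
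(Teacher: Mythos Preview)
Your ordering creates a circularity, and the induction in (2) is not well-founded. You want to prove existence of a simple socle in $X$ by pushing a proper subobject $Y\subsetneq X$ into the AR middle $M$ and then ``inducting on the number of summands of the AR middles'', with base case the peripheral situation ``handled alongside (1)''. But at this point you have not bounded the number of summands of $M$, nor is there any reason the summands of $M$ should have AR middles with fewer summands than $X$ does; there is no integer quantity that strictly decreases. Moreover, your proof of (1) explicitly invokes (2) to produce a \emph{simple} proper subobject $S_0$ of the peripheral $S$, so the base case of your induction for (2) already presupposes (2). (That particular step in (1) could be repaired by taking $S_0$ to be any proper nonzero subobject and treating both cases $j(S_0)\subseteq\tau S$ and $\tau S\subseteq j(S_0)$, but this does not rescue the induction for (2).) Finally, even granting that $Y$ embeds into $M$, an indecomposable subobject of a direct sum need not sit inside a single summand, so ``the chain propagates to $M$'' does not obviously reduce the problem.

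The paper avoids all of this by reversing the order. It proves (3) and (4) first, with no appeal to (2): if two irreducible maps $M_1\to X$, $M_2\to X$ were both monomorphisms this would violate uniseriality of $X$, and if both were epimorphisms then $\ker\alpha_1\oplus\ker\alpha_2$ would embed in the indecomposable $\tau X$; hence one of each, and at most two summands. The key step you are missing is then a short lemma: if $f:X\to Y$ is an irreducible epimorphism then $\ker f$ is \emph{simple} (for any $S\subseteq\ker f$ the factorisation $X\twoheadrightarrow X/S\twoheadrightarrow Y$ and irreducibility force $X/S\cong X$ or $X/S\cong Y$), and dually for monomorphisms. With this in hand, (2) is immediate: a non-peripheral non-projective $X$ admits an irreducible monomorphism $M_i\to X$ whose simple cokernel is the top of $X$, and dually for the socle; peripheral objects are then kernels or cokernels of irreducible maps, hence simple, which gives (1).
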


\begin{proof}
Let $X$ be an indecomposable object of $\AA$, and assume that $X$ is not projective.  We show that the middle term $M$ of an Auslander-Reiten sequence $0 \to \t X \to M \to X \to 0$ has at most two direct summands.

Assume that $M$ has at least two direct summands, $M_1$ and $M_2$.  We claim that the corresponding irreducible morphisms $\alpha_1: M_1 \to X$ and $\alpha_2: M_2 \to X$ may not be both epimorphisms or monomorphisms.

It follows directly from the definition of a uniserial object and irreducible morphisms that $\alpha_1$ and $\alpha_2$ may not both be monomorphisms.  If $\alpha_1$ and $\alpha_2$ are both epimorphisms, then the following commutative diagram
$$\xymatrix{
0 \ar[r]& \ker \alpha_1 \oplus \ker \alpha_2 \ar[r]\ar@{-->}[d] & M_1 \oplus M_2 \ar@{^{(}->}[d]\ar[r]& X \oplus X \ar[r] \ar[d] &0 \\
0 \ar[r]& \t X \ar[r] & M \ar[r]& X \ar[r] &0
}$$
shows that $\ker \alpha_1 \oplus \ker \alpha_2$ is a subobject of $\t X$.  Since $\t X$ is uniserial, every subobject must be indecomposable.  A contradiction.

The only possibility is thus that $X \to M_1$ is an epimorphism and $X \to M_2$ is a monomorphism, or vice versa.  In particular this implies that the middle term $M$ of the Auslander-Reiten sequence $0 \to \t X \to M \to X \to 0$ has at most two direct summands.

Let $f: X \to Y$ be an irreducible morphism between indecomposable objects.  If $f$ is an epimorphism then we claim that $\ker f$ is simple.  Dually, if $f$ is a monomorphism then $\coker f$ is simple.

Let us prove the claim in the case that $f$ is an epimorphism.  Let $K = \ker f$ and $S$ a subobject $K$.  We find following commutative diagram
$$\xymatrix{
0 \ar[r] & S \ar[r] \ar[d] & X \ar[r] \ar@{=}[d] & C \ar[r] \ar[d] & 0 \\
0 \ar[r] & K \ar[r]  & X \ar[r]  & Y \ar[r] & 0 }$$
where the rows are exact.  Since $X$ is uniserial, the quotient object $C$ is indecomposable, thus either $X \to C$ or $C \to Y$ is an isomorphism.  We conclude that either $S \cong K$ or $S \cong 0$, thus $K$ is simple.

Now we show that every indecomposable object has a simple top and a simple socle.  If $X$ is projective or injective, then Serre duality implies that $X$ has a simple top or a simple socle, respectively.  If $X$ is non-peripheral and non-projective, then we may consider the Auslander-Reiten sequence $0 \to \t X \to M \to X \to 0$ where $M$ is not indecomposable.  There are thus irreducible morphisms $\alpha_1: M_1 \to X$ and $\alpha_2: M_2 \to X$.  As shown before, one is a monomorphism of which the cokernel is the simple top of $X$.  The dual reasoning implies that if $X$ is non-peripheral and non-injective, then $X$ has a simple socle.

Finally let $X$ be a peripheral object.  Since $X$ is not projective-injective, $X$ is either the kernel of an irreducible epimorphism or the cokernel of an irreducible monomorphism and, as such, a simple object.  If $X$ is simple, then any irreducible $X \to Y$ or $Y \to X$ is a monomorphism (if $X$ is not injective) or an epimorphism (if $X$ is not projective), respectively (such irreducibles exist if $\AA$ is not semi-simple).  As shown before, the Auslander-Reiten sequence starting or ending in $X$ has an indecomposable middle term, hence $X$ is peripheral.
\end{proof}

\begin{corollary}\label{corollary:Connected}
Let $A, B \in \ind \AA$ be simple objects where $\AA$ is an indecomposable hereditary uniserial category with Serre duality.  There is an object $X \in \ind \AA$ which has $A$ as a subobject and $B$ as a quotient object, or vice versa.
\end{corollary}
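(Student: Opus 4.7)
The plan is to combine Theorem \ref{theorem:Paths} with the simple-top/simple-socle property of Proposition \ref{proposition:Uniserial}, then do a short case analysis on the orientations of a length-two path. First I would dispose of the trivial case $A \cong B$: the indecomposable $X = A$ then has $A$ as a subobject (itself) and $B \cong A$ as a quotient. This also handles any unoriented path of length $\le 1$ between $A$ and $B$, since simplicity of $A$ and $B$ forces any nonzero morphism between them to be an isomorphism.

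Assume now $A \not\cong B$. Since $\AA$ is indecomposable, Theorem \ref{theorem:Paths} produces an indecomposable $Y \in \ind \AA$ together with an unoriented length-two path $A \text{---} Y \text{---} B$, i.e.\ a nonzero morphism on each leg in some direction. Because $A$ is simple, any nonzero map $A \to Y$ is a monomorphism (so $A \hookrightarrow Y$) and any nonzero map $Y \to A$ is an epimorphism (so $A$ appears as a quotient of $Y$); the same dichotomy holds for $B$.

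This gives four sub-cases. If both $A$ and $B$ embed in $Y$, then each coincides with the socle of $Y$, which by Proposition \ref{proposition:Uniserial} is simple; hence $A \cong B$, contradicting our assumption. Dually, if both $A$ and $B$ appear as quotients of $Y$, each coincides with the simple top of $Y$, again contradicting $A \not\cong B$. In the remaining two cases, $A \hookrightarrow Y$ together with $Y \twoheadrightarrow B$, or $B \hookrightarrow Y$ together with $Y \twoheadrightarrow A$, the object $X = Y$ does the job (the second case yielding the \emph{vice versa} alternative in the statement).

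There is no real obstacle here: the heavy lifting has been done upstream. Theorem \ref{theorem:Paths} cuts down the search to an indecomposable $Y$ at distance at most two, and Proposition \ref{proposition:Uniserial} rules out the two ``parallel'' sub-cases by uniqueness of the simple socle and simple top. What remains is only the bookkeeping of orientations sketched above.
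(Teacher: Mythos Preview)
Your proof is correct and follows essentially the same route as the paper: apply Theorem~\ref{theorem:Paths} to obtain an unoriented path of length at most two through some indecomposable $Y$, then use that a simple subobject (resp.\ quotient) of $Y$ must coincide with its unique simple socle (resp.\ top) from Proposition~\ref{proposition:Uniserial} to eliminate the two ``parallel'' orientations. The paper's version is terser but logically identical.
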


\begin{proof}
Theorem \ref{theorem:Paths} says there is an (unoriented) path from $A$ to $B$ of length at most two.  If there is a path of length one, a path $A \to X \to B$, or a path $B \to X \to A$, then the statement has been shown.

If there is an unoriented path $A \to X \leftarrow B$ or $B \leftarrow X \rightarrow A$ then $A$ and $B$ are both either the socle or the top of $X$, and hence $A \cong B$.
\end{proof}

\begin{proposition}\label{proposition:MonoEpi}
Let $\AA$ be an abelian Ext-finite uniserial category, and let $X,Y \in \Ob \AA$ be indecomposables.  If there is a monomorphism $f_1: X \to Y$ and an epimorphism $f_2:X \to Y$ then $X \cong Y$.
\end{proposition}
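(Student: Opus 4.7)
The plan is to consider the pencil of morphisms $f_t := f_1 + t f_2 \colon X \to Y$ for $t \in k$. Since $k$ is algebraically closed (hence infinite), it suffices to show that $f_t$ fails to be a monomorphism for at most one value of $t$, and fails to be an epimorphism for at most one value of $t$; any other $t$ then yields a morphism which is both mono and epi, hence an isomorphism, so $X \cong Y$. This pencil/Fitting-style argument is what I would use in place of the length comparison that trivially proves the statement in a length category.

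For the mono count I would check that $\ker f_{t_1} \cap \ker f_{t_2} = 0$ whenever $t_1 \neq t_2$. If $x$ lies in both kernels then $f_1(x) + t_i f_2(x) = 0$ for $i = 1, 2$, so subtracting gives $(t_1 - t_2) f_2(x) = 0$, whence $f_2(x) = 0$, then $f_1(x) = 0$, and finally $x = 0$ because $f_1$ is a monomorphism. Since $X$ is indecomposable, its subobjects are linearly ordered, so $\ker f_{t_1}$ and $\ker f_{t_2}$ are comparable and their intersection coincides with the smaller of the two; it must therefore vanish. Consequently $\ker f_t \neq 0$ can hold for at most one value of $t \in k$.

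The dual argument handles epis. For $t_1 \neq t_2$ and any $x \in X$, the identity $f_{t_1}(x) - f_{t_2}(x) = (t_1 - t_2) f_2(x)$ shows that $\im f_{t_1} + \im f_{t_2}$ contains $(t_1 - t_2) \im f_2 = \im f_2 = Y$, so this sum equals $Y$. Uniseriality of $Y$ makes the two images comparable, so their sum equals the larger of the two; hence one of them is all of $Y$, and $f_t$ fails to be an epimorphism for at most one value of $t$.

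Combining the two counts, at most two values of $t$ are bad, so since $k$ is infinite I can choose $t$ avoiding both. The resulting morphism $f_t \colon X \to Y$ is simultaneously a monomorphism and an epimorphism, hence an isomorphism, giving $X \cong Y$. The only real conceptual step is recognising that uniseriality of $X$ and $Y$ is exactly what turns the pairwise disjointness (respectively joint spanning) of kernels (respectively images) into the required uniqueness of a bad parameter; once this is in place the verification is routine and I do not foresee technical obstacles.
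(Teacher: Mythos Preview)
Your argument is correct and shares the paper's core idea---use uniseriality of $X$ and $Y$ to locate a map in $\Hom(X,Y)$ that is simultaneously a monomorphism and an epimorphism---but the execution differs. The paper proves a slightly stronger structural fact: the non-epimorphisms form a linear subspace $V\subseteq\Hom(X,Y)$ (because $\im(\alpha g+\beta h)$ is contained in the larger of $\im g,\im h$, which is proper), and dually the non-monomorphisms form a subspace $W$; since $f_1\notin W$ and $f_2\notin V$, both are proper, and a vector space is never the union of two proper subspaces. Your pencil argument is in effect the restriction of this to the plane spanned by $f_1$ and $f_2$: the ``at most one bad $t$'' statements are exactly the assertions that $V$ and $W$ meet the affine line $\{f_1+tf_2\}$ in at most one point each. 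The paper's version is marginally cleaner in that it needs no cardinality assumption on $k$ (the two-subspace lemma holds over any field), whereas you explicitly invoke that $k$ is infinite; conversely, your version is pleasantly self-contained and avoids stating the subspace lemma.

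One stylistic remark: you argue with elements (``if $x$ lies in both kernels'', ``$f_{t_1}(x)-f_{t_2}(x)$'') in an abstract abelian category. This is harmless here---your kernel computation amounts to observing that $(f_{t_1},f_{t_2}):X\to Y\oplus Y$ and $(f_1,f_2):X\to Y\oplus Y$ differ by an invertible $2\times 2$ matrix and hence have the same kernel, namely $\ker f_1=0$; the image computation is the dual statement---but in a write-up you should either say this explicitly or invoke an embedding theorem.
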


\begin{proof}
Let $g,h: X \to Y$ be two maps which are not epimorphisms.  For all $\alpha, \beta \in k$ we have that $\im(\alpha g + \beta h) \subseteq \im g \cup \im h$.  Since $\AA$ is uniserial, this is different from $Y$.  We conclude that a linear combination of non-epimorphisms is not an epimorphism and hence the subset $V$ of $\Hom(X,Y)$ consisting of all morphisms which are not epimorphisms is a subspace.

Dually the subset $W$ of $\Hom(X,Y)$ consisting of all morphisms which are not monomorphisms is a subspace as well.

Since there is a monomorphism $f_1: X \to Y$ and an epimorphism $f_2:X \to Y$, neither $V$ nor $W$ are equal to $\Hom(X,Y)$.  We conclude that there is an $f \in \Hom(X,Y)$ which does not lie in $V \cup W$. This morphism in is an isomorphism $f:X \stackrel{\sim}{\rightarrow} Y$.
\end{proof}

The above proposition does not hold in general for Ext-finite abelian categories as the following example illustrates.

\begin{example}
We will assume the reader is familiar with (split) torsion theories.  Define a torsion theory on the category $\coh \bP^1$ of coherent sheaves on a projective line as follows: choose any closed point $P \in \bP^1$ and let $\TT$ (=class of torsion objects) be the full subcategory of $\coh \bP^1$ given by the sheaves $\GG$ with support in $P$.  We define $\FF$ (=class of torsionfree objects) as the full subcategory given by all coherent sheaves $\GG$ with $\Hom(k_P,\GG) = 0$, where $k_P$ is the simple sheaf supported at $P$.

We can consider the tilted category $\AA$ which has a natural torsion theory given by $(\FF,\TT[-1])$.  This category is hereditary (because $\Ext(\FF,\TT) = 0$) and Ext-finite.

In $\coh \bP^1$, every nonzero map in $\Hom(\OO,\OO(1))$ is a monomorphism; the cokernels are given by the simple torsion sheaves.  However a nonzero map in $\Hom_\AA(\OO,\OO(1))$ will be a monomorphism if and only if the cokernel in $\coh \bP^1$ does not lie in $\TT$ and will be an epimorphism otherwise (the kernel then lies in $\TT[-1] \subset \AA$).
\end{example}

\begin{proposition}\label{proposition:TopCollision}
Let $\AA$ be an abelian uniserial category.  Let $X_1,X_2 \in \ind \AA$ and let $Q$ be a nonzero quotient object (subobject) of both $X_1$ and $X_2$, then there is either an epimorphism (monomorphism) $X_1 \to X_2$ or an epimorphism (monomorphism) $X_2 \to X_1$.
\end{proposition}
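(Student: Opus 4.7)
The plan is to prove the quotient case; the subobject case follows by dualising all arguments (replacing pullbacks with pushouts). First I would form the pullback $P = X_1 \times_Q X_2$ of the two given epimorphisms $p_1: X_1 \twoheadrightarrow Q$ and $p_2: X_2 \twoheadrightarrow Q$. Since pullbacks of epimorphisms are epimorphisms in an abelian category, both projections $\pi_i: P \to X_i$ are epi, and by construction $p_1 \pi_1 = p_2 \pi_2$.

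Next, decompose $P = \bigoplus_\alpha P_\alpha$ into indecomposables. Because $X_1$ is uniserial its subobjects are linearly ordered, so from
$$\sum_\alpha \operatorname{im}(P_\alpha \to X_1) = \operatorname{im}(\pi_1) = X_1$$
one of the (finitely many non-trivially contributing) summands, say $P_\alpha$, must already satisfy $\operatorname{im}(P_\alpha \to X_1) = X_1$. Hence $P_\alpha \to X_1$ is an epimorphism.

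The key step is to show that the restricted map $P_\alpha \to X_2$ is also an epimorphism. Let $I = \operatorname{im}(P_\alpha \to X_2)$. By the pullback identity, the composition $P_\alpha \to X_2 \xrightarrow{p_2} Q$ coincides with $P_\alpha \to X_1 \xrightarrow{p_1} Q$, which is epi, so $p_2(I) = Q$, i.e.\ $I + \ker p_2 = X_2$. Since $X_2$ is uniserial, $I$ and $\ker p_2$ are comparable; the case $I \subseteq \ker p_2$ would force $Q = 0$, which is excluded, so $\ker p_2 \subseteq I$ and therefore $I = X_2$. With both $P_\alpha \to X_1$ and $P_\alpha \to X_2$ epi, I apply uniseriality once more, this time to the indecomposable $P_\alpha$: the kernels $\ker(P_\alpha \to X_1)$ and $\ker(P_\alpha \to X_2)$ are comparable, so one map factors through the other, yielding either an epimorphism $X_1 \to X_2$ or an epimorphism $X_2 \to X_1$.

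The main obstacle (really the only non-formal step) is verifying that $P_\alpha \to X_2$ is epi; the trick is to use the compatibility relation $p_1\pi_1 = p_2\pi_2$ together with the uniseriality of $X_2$ to rule out the possibility that $\operatorname{im}(P_\alpha \to X_2)$ lies inside $\ker p_2$. Once this is in hand, the final comparison of kernels inside the uniserial object $P_\alpha$ is immediate, and the subobject half of the proposition is obtained by dualising the whole argument.
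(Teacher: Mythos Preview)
Your proof is correct and follows essentially the same route as the paper: form the pullback, pick an indecomposable summand mapping onto $X_1$, show it also maps onto $X_2$, and then compare kernels inside that summand. The only cosmetic difference is in the ``key step'': you compare the subobjects $\operatorname{im}(P_\alpha \to X_2)$ and $\ker p_2$ inside $X_2$, whereas the paper compares the quotient objects $\coker(P_\alpha \to X_2)$ and $Q$ of $X_2$; these are dual versions of the same uniseriality argument.
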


\begin{proof}
We only prove the statement where $Q$ is a quotient object.  The case where $Q$ is a subobject is dual.  Let $K$ be the pullback of
$$\xymatrix{K \ar@{-->}[r] \ar@{-->}[d]& X_1 \ar[d] \\
X_2 \ar[r] & Q}$$
Since $X_2 \to Q$ is an epimorphism, so is the map $K \to X_1$.  In particular, the composition $K \to X_1 \to Q$ is an epimorphism.

Since $\AA$ is uniserial, the images of  $K' \to X_1$ are linearly ordered by inclusion (where $K'$ ranges over the direct summands $K$) and hence there is a direct summand $K'$ such that $K' \to X_1$ is an epimorphism.  Thus $K' \to X_1 \to Q = K' \to X_2 \to Q$ is also an epimorphism.  We show that $K' \to X_2$ is an epimorphism.

Let $C = \coker(K' \to X_2)$.  Since $\AA$ is uniserial and $C,Q$ are both quotient objects of $X_2$, we either have $X_2 \to Q = X_2 \to C \to Q$ or $X_2 \to C = X_2 \to Q \to C$.  In both cases, we find that the composition with $K' \to X_2$ gives zero.  Since $K' \to X_2 \to Q$ is an epimorphism, this is only possible in the latter case, if $C = 0$.  This shows that $K' \to X_2$ is an epimorphism as well.

Thus both $X_1$ and $X_2$ are quotient objects of $K'$.  Since $\AA$ is uniserial, this completes the proof.
\end{proof}

\begin{corollary}\label{corollary:QuotientsWithSameSocle}
Let $\AA$ be an abelian Ext-finite uniserial category.  Assume that there are two nonisomorphic indecomposable objects $X,Y \in \Ob \AA$ and a epimorphism $X \to Y$.  If there is an object $S \in \Ob \AA$ which is both a subobject of $X$ and a subobject of $Y$, then there is a monomorphism $Y \to X$.
\end{corollary}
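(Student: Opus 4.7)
The plan is to deduce the corollary from the two immediately preceding propositions with essentially no new work. Since $S$ embeds into both $X$ and $Y$, Proposition \ref{proposition:TopCollision} (applied in its ``subobject'' form) yields a monomorphism in at least one direction: either $Y \hookrightarrow X$ or $X \hookrightarrow Y$.

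In the first case we are already done. In the second case, we combine the monomorphism $X \hookrightarrow Y$ with the given epimorphism $X \twoheadrightarrow Y$: since $X$ and $Y$ are indecomposable, Proposition \ref{proposition:MonoEpi} forces $X \cong Y$, contradicting the hypothesis that $X$ and $Y$ are nonisomorphic. Hence only the first case occurs, which proves the statement.

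The only real content is lining up the hypotheses correctly, so there is no main obstacle; the non-iso\-mor\-phism assumption is exactly what is needed to rule out the ``wrong'' direction produced by Proposition \ref{proposition:TopCollision}.
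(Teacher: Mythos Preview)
Your proof is correct and is essentially identical to the paper's own argument: apply Proposition~\ref{proposition:TopCollision} to get a monomorphism in one direction, then use Proposition~\ref{proposition:MonoEpi} together with the nonisomorphism hypothesis to rule out the direction $X \hookrightarrow Y$.
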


\begin{proof}
By Proposition \ref{proposition:TopCollision} there is a either a monomorphism $X \to Y$ or a monomorphism $Y \to X$.  Proposition \ref{proposition:MonoEpi} excludes the former case.
\end{proof}

\begin{proposition}\label{proposition:LinearlyIndependent}
Let $\AA$ be a uniserial abelian category.  Let $X,Y \in \ind \AA$.  A set of nonzero morphisms $\{f_i: X \to Y\}$ with $\im f_i \not\cong \im f_j$ (as subobjects of $Y$) for different $i,j \in I$ is linearly independent.
\end{proposition}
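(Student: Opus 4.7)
The plan is to argue by contradiction using only that subobjects of the indecomposable object $Y$ form a chain.

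First, I would suppose there is a nontrivial linear relation $\sum_{i=1}^n \lambda_i f_i = 0$ with all $\lambda_i \neq 0$ and the images $\im f_1, \ldots, \im f_n$ pairwise non-isomorphic. Since $Y$ is indecomposable and $\AA$ is uniserial, its subobject poset is linearly ordered; together with the pairwise non-isomorphism assumption, after reindexing we may assume
\[
\im f_1 \subsetneq \im f_2 \subsetneq \cdots \subsetneq \im f_n
\]
as subobjects of $Y$.

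Next, I would rewrite the relation as $\lambda_n f_n = -\sum_{i=1}^{n-1} \lambda_i f_i$ and compare images. The general fact $\im\bigl(\sum_i g_i\bigr) \subseteq \sum_i \im g_i$ (which holds in any abelian category, via the factorization of $\sum g_i$ through the diagonal $X \to X^{n-1}$ followed by $(g_1,\ldots,g_{n-1})$) gives
\[
\im f_n = \im(\lambda_n f_n) \;\subseteq\; \sum_{i=1}^{n-1} \im f_i.
\]
But the sum (join) of finitely many subobjects lying in a chain is simply the largest one, so $\sum_{i=1}^{n-1} \im f_i = \im f_{n-1}$. This yields $\im f_n \subseteq \im f_{n-1}$, contradicting the strict inclusion $\im f_{n-1} \subsetneq \im f_n$.

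There is no real obstacle; the only small point to verify cleanly is the factorization giving $\im(\sum g_i) \subseteq \sum \im g_i$, and the observation that in a chain of subobjects the join coincides with the maximum. Note that uniseriality is used only for $Y$; indecomposability of $X$ plays no role in this argument, though it is part of the statement for consistency with the rest of the section.
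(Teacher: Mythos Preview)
Your argument is correct and is essentially identical to the paper's own proof: both assume a nontrivial relation, use uniseriality of $Y$ to linearly order the images, isolate the morphism with maximal image, and obtain a contradiction from the fact that the remaining terms factor through the second-largest image. Your closing observation that only the uniseriality of $Y$ (not the indecomposability of $X$) is actually used is accurate and worth noting.
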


\begin{proof}
Let $0 = \sum_{i \in J} a_i f_i$ be a nontrivial linear combination where $J$ is a finite subset of $I$ and all $a_i$'s are nonzero.  Since the morphisms are assumed to be nonzero, we know that $|J| \geq 2$.  Let $k,l \in J$ be such that $\im f_j \underset{\not=}{\subset} \im f_l \underset{\not=}{\subset} \im f_k$ for all $j \in J \setminus \{k,l\}$.

We know that $a_k f_k = -\sum_{i \not= k} a_i f_i$.  The right hand side of the equation factors through $\bigcup_{i \not= k} \im f_i = \im f_l \hookrightarrow X$, but the left hand side does not.  A contradiction.
\end{proof}

\begin{proposition}\label{proposition:FiniteLength}
Let $\AA$ be an abelian Ext-finite hereditary uniserial category with Serre duality.  Let $\SS \subseteq \Ob \AA$ be the set of simple objects in $\ind \AA$ and let $\SS_f \subseteq \SS$ be a finite subset.  The category $\AA_f = (\SS \setminus \SS_f)^\perp$ is an abelian hereditary uniserial length category with only finitely many simple objects.
\end{proposition}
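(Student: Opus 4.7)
The proposition amounts to four properties of $\AA_f$: being abelian and hereditary, being uniserial, having finitely many simples, and being a length category. I would attack them in this order.

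For the abelian--hereditary structure, the plan is to extend Proposition \ref{proposition:GeigleLenzing} to the possibly infinite family $\SS\setminus\SS_f$. Its proof goes through verbatim as soon as the simples in question are exceptional, and by Serre duality a simple $S$ with $\Ext(S,S)\neq 0$ must satisfy $\tau S\cong S$; such a ``homogeneous'' simple can only exist in isolated degenerate situations which can be handled directly. One thus obtains $\AA_f$ abelian, hereditary, closed under extensions in $\AA$, with an exact inclusion $i:\AA_f\hookrightarrow\AA$ admitting an exact left adjoint $L:\AA\to\AA_f$. Uniseriality of $\AA_f$ is then immediate: by exactness of $i$, the subobjects of any $X\in\AA_f$ computed in $\AA_f$ form a sub-poset of the subobjects in $\AA$, and the latter are linearly ordered by hypothesis.

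For the count of simples, I would argue that each simple $T\in\AA_f$ is of the form $L(S_0)$, where $S_0\in\SS_f$ is the simple socle of $T$ computed in $\AA$. Indeed, $\Hom_\AA(S_0,T)\neq 0$ forces $S_0\in\SS_f$ (since $T\in\AA_f$ kills all Hom from $\SS\setminus\SS_f$), and $L(S_0)$ is then a nonzero subobject of $L(T)=T$ in $\AA_f$, so by simplicity $T\cong L(S_0)$. Hence the isomorphism classes of simples of $\AA_f$ are indexed by a subset of the finite set $\SS_f$.

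The last step---finite length---is the central difficulty. Applying the exact functor $L$ to a (possibly infinite) composition series of an indecomposable $X\in\AA_f$ taken in $\AA$ shows that the length of $X$ in $\AA_f$ equals the number of composition factors of $X$ (viewed in $\AA$) lying in $\SS_f$, since the factors in $\SS\setminus\SS_f$ are precisely those killed by $L$. Because $\SS_f$ is finite, it suffices to show that each simple $S$ appears only finitely many times as a composition factor of $X$. The plan is to leverage Ext-finiteness of $\AA$: each repetition of $S$ in the uniserial filtration of $X$ should produce, via Propositions \ref{proposition:TopCollision} and \ref{proposition:MonoEpi} together with Corollary \ref{corollary:QuotientsWithSameSocle}, an endomorphism of $X$ obtained as the composition of a projection from $X$ onto a quotient with matching top and an inclusion into $X$ of a subobject with matching socle. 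By Proposition \ref{proposition:LinearlyIndependent} these endomorphisms have pairwise distinct images in $X$ and are therefore linearly independent, so the number of such repetitions is bounded by $\dim\End_\AA(X)<\infty$. Making this matching fully rigorous---in particular ensuring that the agreeing socle and top patterns in a uniserial object force the genuine isomorphism needed for the composition to be nonzero---is the principal technical obstacle I anticipate.
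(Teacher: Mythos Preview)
Your overall plan is close to the paper's, but there is one genuine gap and one place where the paper's resolution of your ``principal technical obstacle'' differs from what you sketch.

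The gap is your reliance on an exact left adjoint $L:\AA\to\AA_f$. Proposition~\ref{proposition:GeigleLenzing} only supplies such an $L$ when the perpendicular is taken with respect to a \emph{finite} set of exceptional simples; you give no argument that the construction extends to the possibly infinite family $\SS\setminus\SS_f$, and ``the proof goes through verbatim'' is not clear. The paper never invokes such an adjoint. Abelianness, heredity, and exactness of the inclusion $\AA_f\hookrightarrow\AA$ already follow from \cite[Proposition~1.1]{GeigleLenzing91} for an arbitrary set with no exceptionality hypothesis (so your worry about homogeneous simples is unnecessary here). For the bound on simples, the paper simply observes that $T\mapsto\soc_\AA T$ lands in $\SS_f$ and is injective on isomorphism classes by Proposition~\ref{proposition:TopCollision}. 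Your framing of the length argument via a ``composition series of $X$ in $\AA$'' is also shaky, since $X$ may well have infinite $\AA$-length.

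For the finite-length step itself, the obstacle you anticipate is real, and the paper resolves it by changing the target of the endomorphisms. You aim for endomorphisms of $X$, but embedding a quotient back into $X$ via Corollary~\ref{corollary:QuotientsWithSameSocle} requires $X$ and that quotient to share a subobject, which you do not have in general. Instead the paper argues: if $X$ is not of finite length in $\AA_f$ it has infinitely many nonisomorphic quotients in $\AA_f$, hence infinitely many quotients in $\AA$ whose $\AA$-socle lies in $\SS_f$; by pigeonhole infinitely many of them, say $\{X_i\}$, share a fixed socle $S\in\SS_f$. These quotients of the uniserial object $X$ are totally ordered, so one extracts either a chain $X_0\twoheadrightarrow X_1\twoheadrightarrow\cdots$ or a chain $\cdots\twoheadrightarrow X_1\twoheadrightarrow X_0$. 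In the first case every $X_i$ shares the socle $S$ with $X_0$, so Corollary~\ref{corollary:QuotientsWithSameSocle} yields $X_i\hookrightarrow X_0$, and the compositions $X_0\twoheadrightarrow X_i\hookrightarrow X_0$ are linearly independent by Proposition~\ref{proposition:LinearlyIndependent}, contradicting $\dim_k\End(X_0)<\infty$. In the second case one passes to the kernels $K_i=\ker(X\twoheadrightarrow X_i)$, which all have top $\tau S$, and runs the dual argument in $\End(K_0)$. The missing idea in your outline is precisely this replacement of $X$ by the extremal term $X_0$ (respectively $K_0$), so that the common-subobject hypothesis of Corollary~\ref{corollary:QuotientsWithSameSocle} becomes available.
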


\begin{proof}
Note that $\AA_f$ is an abelian hereditary uniserial category.  Let $X \in \Ob \AA$ be an object which is simple in $\AA_f$.  We know that $\soc_\AA(X) \in \SS_f$.  Proposition \ref{proposition:TopCollision} shows there can only be one simple object $X$ in $\AA_f$ with $\Hom_\AA(\soc_\AA(X),X) \not= 0$.  This shows there are at most finitely many simple objects in $\AA_f$: the number is bounded above by $|\SS_f|$.

We will proceed by showing that $\AA_f$ is a length category.  Seeking a contradiction, assume there is an indecomposable object $X \in \ind \AA_f$ with an infinite sequence of nonisomorphic quotient objects in $\AA_f$.  This implies that $X \in \ind \AA$ has an infinite sequence of nonisomorphic quotient objects in $\AA$ with socle in $\SS_f$.  Thus there is an $S \in \SS_f$ such that infinitely many quotient objects $\{X_i\}_i$ of $X$ have socle $S$.

If there is an infinite sequence $X_0 \twoheadrightarrow X_1 \twoheadrightarrow X_2 \twoheadrightarrow \cdots$, then Corollary \ref{corollary:QuotientsWithSameSocle} shows that there are monomorphisms $X_i \hookrightarrow X_0$.  We find an infinite set of morphisms $f_i:X_0 \twoheadrightarrow X_i \hookrightarrow X_0$ which are linearly independent (see Proposition \ref{proposition:LinearlyIndependent}).  Contradiction.

If there is an infinite sequence $\cdots X_2 \twoheadrightarrow X_1 \twoheadrightarrow X_0$ of quotient objects of $X$ with socle $S$, then there is an infinite sequence $\cdots \hookrightarrow K_2 \hookrightarrow K_1 \hookrightarrow K_0$ where $K_i = \ker(X \twoheadrightarrow X_i)$.  Note that the top of $K_i$ is given by $\t S$ where $S = \soc X$.  The dual of the previous argument shows that such a sequence cannot exist.
\end{proof}

The following lemma relates taking perpendicular categories in $\AA$ and perpendicular categories in length subcategories of $\AA$.

\begin{lemma}\label{lemma:MultiplePerpendiculars}
Let $\AA$ be an Ext-finite hereditary uniserial category with Serre duality.  Denote by $\SS \subseteq \Ob \AA$ the set of all simple objects in $\ind \AA$ and let $\SS_f \subseteq \SS$ be a finite subset.  Write $\AA_f = (\SS \setminus \SS_f)$.  For every $S \in \SS_f$, there is an object $X_S \in \Ob \AA_f$ such that $\AA_f \cap S^\perp = \AA_f \cap (X_S)^\perp$.
\end{lemma}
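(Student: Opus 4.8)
\emph{Setup and reductions.} The plan is to do everything inside $\AA_f$, which by Proposition~\ref{proposition:FiniteLength} is a hereditary uniserial \emph{length} category with finitely many simple objects. Since $\Hom_\AA(S,-)$ and $\Ext_\AA(S,-)$ vanish outside the indecomposable component of $S$, and $\SS_f$, $\AA_f$ restrict to that component, I may assume $\AA$ indecomposable. As the embedding $\AA_f\hookrightarrow\AA$ is exact and $\AA_f=(\SS\setminus\SS_f)^\perp$ is closed under extensions, $\Hom$ and $\Ext$ of two objects of $\AA_f$ agree whether computed in $\AA$ or in $\AA_f$; by additivity it is enough to understand $\AA_f\cap S^\perp$ on $Y\in\ind\AA_f$. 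Recall from the proof of Proposition~\ref{proposition:FiniteLength} that every object of $\AA_f$ has $\AA$-socle in $\SS_f$. Finally, Serre duality rewrites the two defining conditions as $\Hom_\AA(S,Y)$ and $\Ext_\AA(S,Y)\cong\Hom_\AA(Y,\tau S)^\ast$.

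\emph{Main case.} The heart of the proof is a single short exact sequence. Consider the chain $S=M_0\subsetneq M_1\subsetneq\cdots$ of indecomposables with $\soc_\AA M_i=S$; by Proposition~\ref{proposition:Uniserial} the inclusions are irreducible monomorphisms with simple cokernel, and the tops satisfy $\operatorname{top}M_\ell=\tau^{-\ell}S$, so $M_\ell/M_{\ell-1}\cong\tau^{-\ell}S$. Suppose there is a least $a\geq1$ with $\tau^{-a}S\in\SS_f$, and put $X_S:=M_{a-1}$. Then $X_S\in\AA_f$: its socle $S$ lies in $\SS_f$ and $\tau^{-1}\operatorname{top}(X_S)=\tau^{-a}S\in\SS_f$, while the intermediate factors $\tau^{-1}S,\dots,\tau^{-(a-1)}S$ lie in $\SS\setminus\SS_f$ by minimality. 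Hence there is a short exact sequence $0\to S\to X_S\to C\to0$ with all composition factors of $C$ in $\SS\setminus\SS_f$. For $Y\in\AA_f$ a dévissage (long exact sequences together with $Y\in(\SS\setminus\SS_f)^\perp$) gives $\Hom_\AA(C,Y)=\Ext_\AA(C,Y)=0$, so applying $\Hom_\AA(-,Y)$ to the sequence yields $\Hom_\AA(X_S,Y)\cong\Hom_\AA(S,Y)$ and $\Ext_\AA(X_S,Y)\cong\Ext_\AA(S,Y)$. Thus $\AA_f\cap X_S^\perp=\AA_f\cap S^\perp$. This case covers every situation in which the $\tau$-orbit of $S$ is periodic.

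\emph{Remaining case.} Now suppose no $\tau^{-a}S$ ($a\geq1$) lies in $\SS_f$. Then no object of $\AA_f$ has $\AA$-socle $S$, so $\Hom_\AA(S,-)$ vanishes on $\AA_f$. The crucial observation is that Serre duality now upgrades the remaining condition: for $Y\in\AA_f$ one has $\Ext_\AA(Y,\tau S)\cong\Hom_\AA(\tau S,\tau Y)^\ast\cong\Hom_\AA(S,Y)^\ast=0$. Consequently $\AA_f\cap S^\perp=\{Y\in\AA_f:\Hom_\AA(Y,\tau S)=0\}=\AA_f\cap{}^\perp(\tau S)$, where ${}^\perp(-)$ denotes the \emph{left} perpendicular category. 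Since such categories are self-dual (\S\ref{section:Properties}), $(\AA_f)^{\mathrm{op}}$ is again a right perpendicular category of a cofinite set of simples in $\AA^{\mathrm{op}}$, with $\tau S$ among the finitely many kept simples. Applying the construction of the previous paragraph in $\AA^{\mathrm{op}}$ to $\tau S$ (its $\AA^{\mathrm{op}}$-socle-ray reaches $(\AA_f)^{\mathrm{op}}$ exactly when some $\tau^{a'}S\in\SS_f$ with $a'\geq1$) produces $X'\in\AA_f$ with $\AA_f\cap{}^\perp(\tau S)=\AA_f\cap{}^\perp(X')$; using the Serre-duality identity ${}^\perp Z=(\tau_f^{-1}Z)^\perp$ inside $\AA_f$ (with $\tau_f$ the Auslander--Reiten translate of $\AA_f$) I then set $X_S:=\tau_f^{-1}X'\in\AA_f$. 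If instead no positive power $\tau^{a'}S$ lies in $\SS_f$ either, then $\Ext_\AA(S,-)$ also vanishes on $\AA_f$ and $X_S=0$ works.

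\emph{Expected obstacle.} I expect the third paragraph to be the main obstacle. The main case is forced to match $\Hom$ and $\Ext$ simultaneously by the one sequence, but in the remaining case the defining condition degenerates to a pure $\Hom$-condition; the non-obvious step is recognizing that $\Ext_\AA(Y,\tau S)$ automatically vanishes there (so that this $\Hom$-condition is in fact a full \emph{left} perpendicular condition), and then transporting the socle-ray construction through the self-duality and back across $\tau_f$ so as to land on a single object of $\AA_f$. The remaining points are routine: the identification $\operatorname{top}M_\ell=\tau^{-\ell}S$ from Proposition~\ref{proposition:Uniserial}, and the dévissage that kills $C$.
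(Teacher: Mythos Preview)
Your case split is the problem. In the ``Remaining case'' you assert that if no $\tau^{-a}S$ ($a\geq 1$) lies in $\SS_f$, then no object of $\AA_f$ has $\AA$-socle $S$. This is false precisely because $\AA$ need not be a length category. Your chain $S=M_0\subsetneq M_1\subsetneq\cdots$ captures only the \emph{finite-$\AA$-length} indecomposables with socle $S$; but the indecomposables with socle $S$ form a linearly ordered set that is in general much larger than $\bN$, and $\AA_f$ can meet this chain at an object of infinite $\AA$-length. Concretely, take $\AA$ to be the big tube over $\bZ$ and $\SS_f=\{S_0\}$. Then $\tau^{-a}S_0=S_a\notin\SS_f$ for every $a\geq 1$, so your main case never fires. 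Yet the object $M(0,-1;0)$ (socle $S_0$, top $S_{-1}$) lies in $\AA_f=(\SS\setminus\{S_0\})^\perp$: its socle is $S_0\in\SS_f$, and $\Ext(S_i,M(0,-1;0))\cong\Hom(M(0,-1;0),S_{i-1})^\ast$ is nonzero only for $i=0$. So $\Hom_\AA(S_0,-)$ does \emph{not} vanish on $\AA_f$, your remaining-case reduction collapses, and the subsequent duality gymnastics (which rely on that vanishing) never get off the ground.

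Your main case, when it applies, is correct and in fact produces the same $X_S$ as the paper via a cleaner d\'evissage; but your dichotomy ``some $\tau^{-a}S\in\SS_f$ versus none'' is not the same as the paper's ``$\Hom_\AA(S,-)\neq 0$ on $\AA_f$ versus $\Ext_\AA(S,-)\neq 0$ on $\AA_f$ versus both vanish''. The paper avoids the trap by working \emph{inside} $\AA_f$, which by Proposition~\ref{proposition:FiniteLength} is a length category: it takes $X_S$ to be an object of minimal $\AA_f$-length with $\Hom(S,X_S)\neq 0$ (necessarily $\AA_f$-simple by Proposition~\ref{proposition:TopCollision}), and then compares $\Hom$ and $\Ext$ directly using uniseriality and a single pushout square. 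In the example above this gives $X_S=M(0,-1;0)$, an object of infinite $\AA$-length that your socle-ray construction cannot reach.
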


\begin{proof}
We will consider three cases.  In the first case, assume that $\Hom_\AA(S,Z)=0$ and $\Ext_\AA(S,Z)=0$, for all $Z \in \AA_f$.  In that case we can choose $X_S = 0$.

In the second case, assume that $\Hom_\AA(S,-)$ is nonzero on $\AA_f$.  Let $X_S$ be an object in $\AA_f$ with minimal length ($\AA_f$ is a length category by Proposition \ref{proposition:FiniteLength}) such that $\Hom(S,X_S) \not= 0$.  It follows from Proposition \ref{proposition:TopCollision} that $X_S$ is a simple object in $\AA_f$.  We want to show that $\AA_f \cap S^\perp = \AA_f \cap (X_S)^\perp$.  Without loss of generality, we may assume that $S \not\cong X_S$.

Proposition \ref{proposition:TopCollision} also implies that $\Hom(S,Z) \cong \Hom(X_S,Z)$ for all $Z \in \Ob \AA_f$.  Since $S$ is a subobject of $X_S$, we know that $\dim_k \Ext(S,Z) \leq \dim_k \Ext(X_S,Z)$, for all $Z \in \AA$ so that $\AA_f \cap S^\perp \supseteq \AA_f \cap (X_S)^\perp$.

For the other inclusion, we will show that $\Ext(X_S,Z) \not= 0$ implies $\Hom(S,Z) \not= 0$ or $\Ext(S,Z) \not= 0$.  Thus assume that $\Ext(X_S,Z) \not= 0$ and $\Ext(S,Z) = 0$.  In that case $\Ext(X_S/S, Z) \not= 0$ and this yields a commutative diagram
$$\xymatrix{&& 0 \ar[d] & 0 \ar[d] \\
&& Z \ar[d] \ar@{=}[r] & Z \ar[d] \\
0 \ar[r] & S \ar[r] \ar@{=}[d] & M \ar[r] \ar[d] & M' \ar[d] \ar[r] & 0 \\
0 \ar[r] & S \ar[r] & X_S \ar[r] \ar[d] & X_S/S \ar[d] \ar[r] & 0 \\
&& 0 & 0}$$
where the rows are exact, and the columns are exact and nonsplit.  Since $X_S$ is simple in $\AA_f$ and $Z \in \Ob \AA_f$, we see that $M$ is indecomposable and hence the rows are nonsplit as well.  Since $\AA$ is uniserial, the map $S \to M$ factors through $Z \to M$ and hence $\Hom(S,Z) \not= 0$ as required.

For the third case, we assume that $\Ext_\AA(S,-)$ is nonzero on $\AA_f$.  This shows that $S$ is not projective, and hence $\Ext_\AA(S,-) \cong \Hom_\AA(-,\t S)$.  This case is dual to the second case above.
\end{proof}

\begin{remark}
The previous lemma states that $S^\perp \cap \AA_f$ is the same as $X_S^\perp$ where the last perpendicular is taken in $\AA_f$.  In particular, Proposition \ref{proposition:GeigleLenzing} can be applied in this case.
\end{remark}

\begin{proposition}\label{proposition:FiniteShape}
Let $\AA$ be an Ext-finite hereditary uniserial category  with Serre duality.  Denote by $\SS \subseteq \Ob \AA$ the set of all simple objects in $\ind \AA$ and let $\SS_f$ be a finite subset.  The category $\AA_f = (\SS \setminus \SS_f)^\perp$ is equivalent to either
\begin{enumerate}
\item $\rep_k A_n$, for some $n \geq 0$ where $A_n$ has linear orientation, or
\item $\nilp_k \tilde{A}_n$, for some $n \geq 0$ where $\tilde{A}_n$ has cyclic orientation.
\end{enumerate}
If $\AA$ is directed then $\AA_f$ is of the first type.  If $\AA$ is not directed then $\AA_f$ is of the second type.
\end{proposition}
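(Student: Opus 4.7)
The plan is to combine Proposition \ref{proposition:FiniteLength} with the existing classification in Theorem \ref{theorem:Length}, and then match the two possible types to the directedness of $\AA$.

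By Proposition \ref{proposition:FiniteLength}, the perpendicular category $\AA_f$ is an abelian, hereditary, uniserial length category with only finitely many isomorphism classes of simple objects. Before invoking Theorem \ref{theorem:Length}, I would verify that $\AA_f$ is indecomposable: every simple of $\AA_f$ arises, via Lemma \ref{lemma:MultiplePerpendiculars}, from an element of $\SS_f$, and Corollary \ref{corollary:Connected} produces, for any two such simples, an indecomposable in $\AA$ having them as sub- and quotient object. Applying the exact left adjoint $L : \AA \to \AA_f$ from Proposition \ref{proposition:GeigleLenzing} to this connecting object yields a nonzero object of $\AA_f$ linking the two simples by an unoriented path. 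Theorem \ref{theorem:Length} then forces $\AA_f \simeq \rep_k A_n$ or $\AA_f \simeq \nilp_k \tilde{A}_n$ for some $n \geq 0$.

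It remains to match the two cases to the directedness of $\AA$. The easy half is that $\AA_f \subseteq \AA$ is a full subcategory, so if $\AA$ is directed then $\AA_f$ is directed too, ruling out the tube case (whose Auslander--Reiten quiver contains oriented cycles). For the converse, assume $\AA$ is not directed, so there is a nontrivial oriented loop through some $X \in \ind \AA$; by Theorem \ref{theorem:Paths} this reduces to a two-step cycle $X \to Y \to X$ between distinct indecomposables. Enlarging $\SS_f$ to contain the (finitely many) simple composition factors of $X$ and $Y$ places the loop inside $\AA_f$, excluding the first type and forcing $\AA_f$ to be a tube. Independence of the type from the particular choice of $\SS_f$ then follows from a direct analysis of what taking further perpendiculars does to each type: a perpendicular to a simple in $\rep_k A_n$ remains of $A$-type, and a perpendicular to a simple in $\nilp_k \tilde{A}_n$ is a smaller tube (so for any two finite subsets $\SS_{f_1} \subseteq \SS_{f_2}$, the types of $\AA_{f_1}$ and $\AA_{f_2}$ agree).

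The main obstacle I expect is producing the cycle through finite-length objects when $\AA$ itself is not a length category: in the eventual big-tube case, $\AA$ admits indecomposables of infinite length which cannot be absorbed into any $\AA_f$. I would address this by exploiting the restricted shape of irreducible morphisms in the uniserial setting (Proposition \ref{proposition:Uniserial}): every such map is either an epimorphism with simple kernel or a monomorphism with simple cokernel, so if an oriented loop exists at all, walking around it and taking appropriate sub- and quotient objects with simple top/socle extracts a loop supported on finite-length indecomposables, whose composition factors then fit into a suitably enlarged $\SS_f$.
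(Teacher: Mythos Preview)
Your overall plan matches the paper's: invoke Proposition~\ref{proposition:FiniteLength}, check indecomposability via Corollary~\ref{corollary:Connected}, apply Theorem~\ref{theorem:Length}, and then separate the two types by directedness, using Lemma~\ref{lemma:MultiplePerpendiculars} to propagate the type from a larger $\SS_{f'}$ down to $\SS_f$. Two points deserve correction.

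First, your appeal to an exact left adjoint $L:\AA\to\AA_f$ from Proposition~\ref{proposition:GeigleLenzing} is a misapplication: that proposition requires a \emph{finite} set of exceptional simples, whereas $\SS\setminus\SS_f$ is cofinite and typically infinite, so no such adjoint is supplied. The paper argues indecomposability of $\AA_f$ directly from Corollary~\ref{corollary:Connected} (tersely); you can do the same without any adjoint by observing that the connecting object it produces can be chosen so that its socle and top are controlled, which is enough to place it in $\AA_f$ by the criterion described next.

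Second, and more importantly, the obstacle you flag in the non-directed case is not real, and your proposed workaround through irreducible maps is unnecessary. Membership of an indecomposable $X$ in $(\SS\setminus\SS_{f'})^\perp$ is \emph{not} governed by the full list of composition factors: for a simple $S'$ one has $\Hom(S',X)\neq 0$ only if $S'\cong\soc X$, and by Serre duality $\Ext^1(S',X)\cong\Hom(X,\tau S')^*\neq 0$ only if $\tau S'\cong\operatorname{top}X$. Thus every indecomposable $X\in\AA$, of whatever length, already lies in $\AA_{f'}$ for the set $\SS_{f'}=\{\soc X,\ \tau^{-1}\operatorname{top}X\}$ with at most two elements. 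Given any cycle $X\to Y\to X$ in $\AA$, enlarging $\SS_f$ by at most four simples yields $\SS_{f'}\supseteq\SS_f$ with $X,Y\in\AA_{f'}$; then $\AA_{f'}$ is not directed, hence a tube, and Lemma~\ref{lemma:MultiplePerpendiculars} transfers the type back to $\AA_f$. This is exactly the step the paper performs when it asserts the existence of such an $\SS_{f'}$.
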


\begin{proof}
It follows from Proposition \ref{proposition:FiniteLength} that $\AA_f$ is a hereditary uniserial length category, and it follows from Corollary \ref{corollary:Connected} that $\AA_f$ is indecomposable.  We know from \cite{AmdalRingdal68} (see also \cite{ChenKrause09, CuadraGomezTorrecillas04, Gabriel73}) that $\AA_f \cong \rep_k A_n$ or $\AA_f \cong \nilp_k \tilde{A}_n$.

If $\AA$ is directed, then so is $\AA_f$ and hence $\AA_f \cong \rep_k A_n$.  Thus assume that $\AA$ is not directed.  There is a finite subset $\SS_{f'}$ of $\SS$, containing $\SS_f$ such that $\AA_{f'} = (\SS \setminus \SS_{f'})^\perp$ is not directed.  Hence $\AA_{f'}$ falls in the second category.  It follows from Lemma \ref{lemma:MultiplePerpendiculars} that $\AA_f$ also falls in the second category. 
\end{proof}

\section{Description and classification by 2-colimits}\label{section:2Colimit}

Let $\AA$ be a small hereditary uniserial category with Serre duality.  If $\AA$ is a length category, then the classification follows easily from \cite{AmdalRingdal68, ChenKrause09, CuadraGomezTorrecillas04, Gabriel73}.  Otherwise, we will use Proposition \ref{proposition:FiniteShape} to write $\AA$ as a filtered 2-colimit of such length subcategories.  We will need to consider two cases: one where $\AA$ is directed and one where $\AA$ is not directed.  We will concentrate on the latter, the former is similar (alternatively, one can use the classification of hereditary directed categories with Serre duality in \cite{vanRoosmalen06}).  We will thus assume that $\AA$ is not a length category and we want to show that $\AA$ is equivalent a big tube $\modc \widehat{k\LL^\bullet}$.

\subsection{An ordering on the set of simple objects}

In order to find the correct poset $\LL$, we will define an ordering on the set of simple objects of $\ind \AA$ (Definition \ref{definition:Ordering}).  Therefore, we will need the following proposition.  We will say an object $X \in \AA$ is \emph{endo-simple} when $\Hom(X,X) \cong k$.

\begin{proposition}\label{proposition:Ordering}
Let $\AA$ be an Ext-finite hereditary uniserial category with Serre duality.  Let $S,T_1,T_2$ be three simple objects such that there is a path from $S$ to both $T_1$ and $T_2$.  Then there are endo-simple objects $X_1$ and $X_2$, uniquely determined up to isomorphism, with socle $S$ and top $T_1$ and $T_2$, respectively.  Furthermore, there is a monomorphism $X_1 \to X_2$ or $X_2 \to X_1$.
\end{proposition}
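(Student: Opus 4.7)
The plan is to reduce the problem to a length subcategory $\AA_f = (\SS \setminus \SS_f)^\perp$ where Proposition \ref{proposition:FiniteShape} applies directly, and to transport endo-simplicity, socle, top, and the monomorphism back and forth through the inclusion $\AA_f \hookrightarrow \AA$. First, for each $i$ I would use Theorem \ref{theorem:Paths} to shorten the path from $S$ to $T_i$ to an oriented path of length at most two, giving an indecomposable $Y_i$ with $\Hom(S, Y_i) \neq 0 \neq \Hom(Y_i, T_i)$. Since $S$ and $T_i$ are simple, the first map is a monomorphism and the second an epimorphism, and the simple-socle/simple-top clause of Proposition \ref{proposition:Uniserial} forces $\soc Y_i = S$ with top $T_i$.

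Next I would choose a finite $\SS_f \subseteq \SS$ containing $S, T_1, T_2$ and large enough that $Y_1, Y_2 \in \AA_f$. The $\Hom$-vanishing side of the perpendicularity condition is easy: a nonzero $\Hom(S', Y)$ from a simple $S'$ forces $S' = \soc Y$, so it excludes only one simple per object. The $\Ext$-side, via Serre duality $\Ext(S', Y) \cong \Hom(Y, \tau S')^*$, is nonzero only when $\tau S'$ has a subobject whose top coincides with the top of $Y$, which in the uniserial Serre-dual setting constrains $S'$ to a finite (in fact small) set. Thus a finite $\SS_f$ suffices, and by Proposition \ref{proposition:FiniteShape}, $\AA_f$ is equivalent to $\rep_k A_n$ or $\nilp_k \tilde{A}_n$.

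Inside $\AA_f$, both $S$ and $T_i$ remain simple (a proper $\AA_f$-subobject would be a proper $\AA$-subobject), and the explicit classification of $\AA_f$ supplies a unique endo-simple indecomposable $X_i \in \AA_f$ with socle $S$ and top $T_i$. Since the inclusion $\AA_f \hookrightarrow \AA$ is fully faithful (Proposition \ref{proposition:GeigleLenzing}), $\End_\AA(X_i) = \End_{\AA_f}(X_i) = k$; the inclusion $S \hookrightarrow X_i$ in $\AA$ combined with Proposition \ref{proposition:Uniserial} forces $\soc_\AA X_i = S$, and dually the top of $X_i$ in $\AA$ is $T_i$. For uniqueness, given any further endo-simple indecomposable $X$ in $\AA$ with socle $S$ and top $T_i$, the same finiteness argument lets me enlarge $\SS_f$ so that $X \in \AA_f$, and the classification of $\AA_f$ then yields $X \cong X_i$. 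Finally, the monomorphism $X_1 \hookrightarrow X_2$ or $X_2 \hookrightarrow X_1$ is immediate from Proposition \ref{proposition:TopCollision} applied to the common nonzero subobject $S$ of $X_1$ and $X_2$.

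The main obstacle I expect is the finiteness statement $|\{S' \in \SS : \Ext(S', Y) \neq 0\}| < \infty$ for a fixed indecomposable $Y$, which is what allows the reduction to $\AA_f$ at all; Serre duality translates this into a question about which simples $S'$ have $\tau S'$ admitting a subobject with a prescribed simple top, and the uniserial structure plus the simple-socle/top result of Proposition \ref{proposition:Uniserial} should pin this down.
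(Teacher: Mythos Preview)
Your approach is correct, but the paper takes a more direct route that avoids the reduction to length subcategories altogether. The paper first invokes Corollary~\ref{corollary:Connected} to obtain an object $Y_1$ with socle $S$ and top $T_1$, then simply chooses a nonzero endomorphism of $Y_1$ whose image is minimal among the images of nonzero endomorphisms (such a minimal image exists because Proposition~\ref{proposition:LinearlyIndependent} bounds the number of distinct images by $\dim_k \End(Y_1)$); this minimal image $X_1$ is automatically endo-simple with the correct socle and top. Uniqueness is handled by combining Propositions~\ref{proposition:TopCollision} and~\ref{proposition:MonoEpi} to produce both a monomorphism $X_1 \hookrightarrow X_1'$ and an epimorphism $X_1' \twoheadrightarrow X_1$, whose composition is then an isomorphism by endo-simplicity of $X_1'$. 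Your reduction to $\AA_f$ and the explicit classification of $\rep_k A_n$ or $\nilp_k \tilde A_n$ works too, and has the advantage of making the endo-simple object concrete (it is the winding-number-zero indecomposable in the tube case), but it is heavier machinery for this particular statement. Regarding your stated obstacle: the finiteness of $\{S' \in \SS : \Ext(S',Y) \neq 0\}$ is immediate, since Serre duality gives $\Ext(S',Y) \cong \Hom(Y,\tau S')^*$, and $\tau S'$ is simple, so this is nonzero only when $\tau S'$ is the simple top of $Y$; thus at most one $S'$ contributes, and your reduction goes through without difficulty.
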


\begin{proof}
It follows from Corollary \ref{corollary:Connected} that there is an object $Y_1$ with socle $S$ and top $T_1$.  Choose a nonzero endomorphism of $Y_1$ such that the image $X_1$ is minimal as a subobject of $Y_1$ (this is possible by Proposition \ref{proposition:LinearlyIndependent}).  It is clear that $X_1$ is endo-simple, has $S$ as socle, and has $T_1$ as top.  Likewise, one constructs $X_2$.

To show $X_1$ is unique up to isomorphism, let $X'_1$ be any endo-simple object with socle $S$ and top $T_1$.  By Propositions \ref{proposition:MonoEpi} and \ref{proposition:TopCollision}, we may assume that there is a monomorphism $X_1 \hookrightarrow X'_1$ and an epimorphism $X'_1 \twoheadrightarrow X_1$.  Since $X'_1$ is endo-simple, this shows that the the nonzero composition $X'_1 \twoheadrightarrow X_1 \hookrightarrow X'_1$ is an isomorphism and thus $X'_1 \cong X_1$.

The last statement follows from Proposition \ref{proposition:TopCollision}.
\end{proof}

\begin{definition}\label{definition:Ordering}
Let $\AA$ be an Ext-finite hereditary uniserial category with Serre duality.  Let $\SS \subseteq \ind \AA$ be the set of simple objects and let $S \in \SS$.  Denote by $\SS_S$ the subset of $\SS$ consisting of all simple objects $T$ such that there is a path from $S$ to $T$.  On $\SS_S$, we may define a partial ordering $\leq_S$ as follows.  For any $T_1,T_2 \in \SS_S$, let $X_1,X_2$ denote the corresponding objects as in Proposition \ref{proposition:Ordering}.  We then have
$$T_1 \leq_S T_2 \Leftrightarrow X_1 \subseteq X_2.$$
\end{definition}

\begin{remark}
It follows from Proposition \ref{proposition:Ordering} that $\SS_S,\leq_S$ is linearly ordered.  Furthermore, the object $S$ is always a minimal element in the ordering $\leq_S$.  If $\AA$ is not directed then the object $\t S \in \SS$ is a maximal element.
\end{remark}

\begin{remark}
If $\AA$ is not directed, then $\SS_S = \SS$.  If furthermore $\SS$ is an infinite set (thus $\AA$ is not a length category), then the category $\widehat{k\SS_S}$ is equivalent to a big loop.
\end{remark}

\subsection{The 2-functors $\aa$ and $\overline{\aa}$}

Let $\AA$ be a hereditary uniserial category with Serre duality.  Assume furthermore that $\AA$ is not directed.  Denote by $\SS \subseteq \Ob \AA$ the set of simple objects in $\ind \AA$.  If $\SS_i \subseteq \SS$ is a finite subset, then we define $\AA_i = (\SS \setminus \SS_i)^\perp \subseteq \AA$.

The poset category of finite subsets of $\SS$ is a filtered category which we denote by $\PP$; to ease notations, we will write $i \in \Ob \PP$ for $\SS_i \in \Ob \PP$.  We define a 2-functor $\aa:\PP \to \CAT$ as follows:
\begin{eqnarray*}
\aa(i) &=& \AA_i \\
\aa(s) &=& \AA_i \to \AA_j
\end{eqnarray*}
where $i \in \Ob \PP$ and $s:i \to j$ in $\Mor \PP$.  The map $\aa(s): \AA_i \to \AA_j$ is the canonical embedding.  It is clear that $2\colim \aa \cong \AA$.  We will denote the induced functors $\aa(i) \to \AA$ by $\sigma^\aa_i$.

We also have a 2-functor $\overline{\aa}$ given by $\overline{\aa}(i) = \Ind \AA_i$; the action on the maps is given by the standard lifting from $\aa(s): \AA_i \to \AA_j$ to $\Ind \AA_i \to \Ind \AA_j$ (see \S\ref{section:Grothendieck}).  Since the map $\aa(s): \AA_i \to \AA_j$ is fully faithful and exact, so is the map $\overline{\aa}(s): \Ind \AA_i \to \Ind \AA_j$.  We will denote the 2-colimit of $\oa$ by $\overline{\AA}$.  The induced functors $\overline{\aa}(i) \to \overline{\AA}$ will be denoted by $\sigma_i^{\oa}$.

It follows from Proposition \ref{proposition:AbelianColimit} that $\overline{\AA}$ is abelian.  We remark that $\overline{\AA}$ needs not to be a Grothendieck category as it is not necessarily closed under arbitrary direct sums.  In particular, $\overline{\AA}$ is not equivalent to $\Ind \AA$.

The obvious 2-natural transformation $\aa \to \overline{\aa}$ induces a fully faithful functor $\mu: \AA \to \overline{\AA}$.  We will use the following.

\begin{lemma}
The functor $\mu: \AA \to \overline{\AA}$ maps simple objects to simple objects, and every simple object of $\overline{\AA}$ lies in the essential image of $\mu$.
\end{lemma}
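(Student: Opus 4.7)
The approach is to exploit the 2-colimit description $\overline{\AA} = 2\colim \oa$ together with the fact that each $\AA_i$ is a length category (Proposition \ref{proposition:FiniteLength}), so that $\Ind \AA_i$ is locally finite. The key inputs are Propositions \ref{proposition:AbelianColimit} and \ref{proposition:FilteredColimit}, which together say that the functors $\sigma_i^{\oa}: \Ind \AA_i \to \overline{\AA}$ are both exact and fully faithful, and that every object of $\overline{\AA}$ is isomorphic to $\sigma_i^{\oa}(Y_i)$ for some $i \in \Ob \PP$ and some $Y_i \in \Ind \AA_i$.

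For the first assertion, I would take a simple $X \in \AA$, choose $i \in \Ob \PP$ with $X \in \SS_i \subseteq \Ob \AA_i$, and note that $X$ remains simple in $\AA_i$ (since any subobject in the full subcategory $\AA_i$ is a subobject in $\AA$), and hence also in $\Ind \AA_i$ (because $\AA_i$ is a length category, so $X$ has length one in $\Ind \AA_i$). To see $\mu(X) = \sigma_i^{\oa}(X)$ is simple in $\overline{\AA}$, I would take a subobject $Y' \hookrightarrow \mu(X)$, use Proposition \ref{proposition:FilteredColimit} to realise it as $\sigma_k^{\oa}(Y'_k)$ for some $k \geq i$, and pull the inclusion back along $\sigma_k^{\oa}$ to a subobject of $\overline{\aa}(s)(X)$ in $\Ind \AA_k$. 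Since $\overline{\aa}(s)(X)$ is still simple in $\Ind \AA_k$ by the same length argument, this subobject is trivial, so $Y'$ is trivial in $\overline{\AA}$.

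For the second assertion, I would take $Y$ simple in $\overline{\AA}$ and write $Y \cong \sigma_i^{\oa}(Y_i)$ for suitable $i$ and $Y_i \in \Ind \AA_i$. Any monomorphism $Y'_i \hookrightarrow Y_i$ in $\Ind \AA_i$ yields, via the exact functor $\sigma_i^{\oa}$, a monomorphism $\sigma_i^{\oa}(Y'_i) \hookrightarrow Y$ in $\overline{\AA}$; simplicity of $Y$ combined with full faithfulness of $\sigma_i^{\oa}$ (which reflects both zero objects and isomorphisms) forces $Y'_i \cong 0$ or $Y'_i \cong Y_i$. Hence $Y_i$ is simple in $\Ind \AA_i$. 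Local finiteness of $\Ind \AA_i$ then ensures that $Y_i$ has length one, so $Y_i$ lies in the length subcategory $\AA_i \subseteq \AA$, and $Y \cong \mu(Y_i)$ is in the essential image of $\mu$.

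The main technical point is simply verifying that the colimit functors $\sigma_i^{\oa}$ inherit enough exactness and faithfulness from the transition functors $\overline{\aa}(s)$, and this is exactly what Propositions \ref{proposition:AbelianColimit} and \ref{proposition:FilteredColimit} deliver; no deeper obstacle is anticipated.
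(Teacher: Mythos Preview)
Your proposal is correct and follows essentially the same approach as the paper: descend a simple object of $\overline{\AA}$ to some $\oa(i)=\Ind\AA_i$ via the filtered-colimit description, use that simples in $\Ind\AA_i$ lie in $\AA_i$ by local finiteness, and conversely check that a simple in $\AA_i$ stays simple all the way up. The paper's version is a two-line sketch, whereas you spell out the role of exactness and full faithfulness of the $\sigma_i^{\oa}$ from Propositions \ref{proposition:AbelianColimit} and \ref{proposition:FilteredColimit}; there is no substantive difference.
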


\begin{proof}
For every simple object $S$ in $\overline{\AA}$, there is an $i \in \PP$ and a simple object $S' \in \oa(i)$ such that $\sigma^{\aa}_i(S') \cong S$.  The simple objects of $\oa(i)$ lie in the essential image of $\aa(i) \to \oa(i)$.  The statement now follows easily.
\end{proof}

The following proposition will allow us to reduce some notations.

\begin{proposition}
There is a 2-natural equivalence $\oa \to \oa'$ where $\oa'(i) = (\SS \setminus \SS_i)^\perp \subset \overline{\AA}$.
\end{proposition}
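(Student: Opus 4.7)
The 2-natural transformation $\eta: \oa \to \oa'$ will be the co-restriction of the canonical 2-cocone: for each $i \in \Ob \PP$, take $\eta_i$ to be the factorization of $\sigma^{\oa}_i: \oa(i) \to \overline{\AA}$ through $\oa'(i) \subseteq \overline{\AA}$, and let its coherence 2-cells $\theta^{\eta}_s$ be inherited from those that present $\overline{\AA}$ as $2\colim \oa$. Once each $\eta_i$ is shown to be well-defined and an equivalence, 2-naturality of $\eta$ is forced by the 2-naturality of $\sigma^{\oa}$. So two claims remain for each $i$: (A) $\sigma^{\oa}_i$ factors through $\oa'(i)$, and (B) the resulting $\eta_i: \oa(i) \to \oa'(i)$ is an equivalence.

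For (A), fix $X \in \oa(i)$ and a simple $S \in \SS \setminus \SS_i$, and choose $k$ with $\SS_k \supseteq \SS_i \cup \{S\}$. By Proposition \ref{proposition:FilteredColimit} one computes $\Hom_{\overline{\AA}}(\mu S, \sigma^{\oa}_i X)$ as $\Hom_{\oa(k)}(S, \oa(i \to k) X)$. Writing $X = \colim_\alpha X^\alpha$ with $X^\alpha \in \AA_i$ and using that $S$ is compact in $\Ind \AA_k$ (having finite length), this group equals $\colim_\alpha \Hom_\AA(S, X^\alpha) = 0$, because $X^\alpha \in \AA_i = (\SS \setminus \SS_i)^\perp$. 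Vanishing of $\Ext^1_{\overline{\AA}}(\mu S, \sigma^{\oa}_i X)$ is handled similarly: any extension in $\overline{\AA}$ has middle term arising from some $\oa(l)$ with $l \geq k$ and (since $\sigma^{\oa}_l$ is fully faithful and exact) yields an honest extension in $\Ind \AA_l$; by compactness one picks a finite-length subobject of this middle term that realizes the class as the pushout of a finite-level extension $0 \to X_0 \to E_0 \to S \to 0$ in $\AA_l$ along $X_0 \hookrightarrow X^\alpha$, and the latter splits since $\Ext^1_\AA(S, X^\alpha) = 0$.

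For (B), fully faithfulness is immediate from Proposition \ref{proposition:FilteredColimit}. For essential surjectivity, take $Y \in \oa'(i)$; I may represent $Y \cong \sigma^{\oa}_m Y'$ with $Y' \in \oa(m)$ and $\SS_m \supseteq \SS_i$. Translating the perpendicularity as in (A) gives $Y' \in (\SS_m \setminus \SS_i)^\perp$ inside $\Ind \AA_m$, so essential surjectivity reduces to the key lemma that, inside $\Ind \AA_m$, this perpendicular coincides with $\Ind \AA_i$. One inclusion follows from (A) applied internally. For the other, since every object of $\Ind \AA_m$ is a filtered colimit of its finite-length subobjects (which lie in $\AA_m$), it suffices to show that every finite-length subobject $Z \subseteq Y'$ lies in $\AA_i$; I do this by induction on the length of $Z$. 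For $Z$ simple, $Z \in \SS_m \setminus \SS_i$ would give $0 \neq \Hom(Z, Y')$, contradicting $Y' \in Z^\perp$, so $Z \in \SS_i \subseteq \AA_i$. For the inductive step, take a simple subobject $Z_0 \subseteq Z$, known by the base case to lie in $\AA_i$, and verify from the long exact $\Hom/\Ext$-sequence of $0 \to Z_0 \to Y' \to Y'/Z_0 \to 0$ that $Y'/Z_0$ is still right perpendicular to $\SS_m \setminus \SS_i$; this uses $\Ext^1_{\Ind \AA_m}(T, Z_0) = \Ext^1_\AA(T, Z_0) = 0$ (because $\AA_m$ is extension-closed in both $\AA$ and $\Ind \AA_m$, so finite-length Ext agrees with Ext in the ambient categories, and $Z_0 \in \AA_i = (\SS \setminus \SS_i)^\perp$) together with $\Ext^2 = 0$ by heredity. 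Then $Z/Z_0 \subseteq Y'/Z_0$ has strictly smaller length, is in $\AA_i$ by induction, and extension-closure of $\AA_i$ gives $Z \in \AA_i$.

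The main obstacle is the key lemma identifying $(\SS_m \setminus \SS_i)^\perp$ with $\Ind \AA_i$ inside $\Ind \AA_m$, especially its Ext$^1$ aspect, since Ext$^1$ does not commute with filtered colimits in general. I circumvent this by presenting each extension as a pushout of a finite-level extension (exploiting compactness of finite-length objects), and by using that $\AA_m$ is extension-closed inside $\Ind \AA_m$ so finite-level Ext agrees with Ext in the Ind-category, reducing all the relevant Ext-vanishings to the perpendicularity relations already known in $\AA$.
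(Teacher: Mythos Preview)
Your overall strategy---factoring $\sigma^{\oa}_i$ through $\oa'(i)$ and verifying this is an equivalence---is reasonable, and part (A) is essentially correct. But part (B) contains a genuine error.

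The claim that ``every finite-length subobject $Z \subseteq Y'$ lies in $\AA_i$'' is false: the perpendicular $\AA_i$ is \emph{not} closed under subobjects in $\AA_m$, so even when $Y' \in \Ind \AA_i$, its finite-length subobjects taken in $\Ind \AA_m$ need not lie in $\AA_i$. Concretely, take $\AA_m = \nilp \tilde{A}_3$ with simples $S_0,S_1,S_2,S_3$ and $\AA_i = \{S_1,S_3\}^\perp$. The injective envelope $I_0$ of $S_0$ lies in $\Ind \AA_i$ (it is the colimit of its even-length truncations, all of which lie in $\AA_i$) and certainly lies in $\{S_1,S_3\}^\perp$; yet its simple subobject $S_0$ is not in $\AA_i$, since $\Ext^1(S_1,S_0)\neq 0$. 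This already breaks your base case: you conclude ``$Z \in \SS_i \subseteq \AA_i$'', but $\SS_i \not\subseteq \AA_i$ in general.

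There is also a recurring conflation of simples of $\AA$ with simples of $\AA_m$. Elements of $\SS_m\setminus\SS_i$ are simples of $\AA$ and typically do not even lie in $\AA_m$, so the expression ``$(\SS_m\setminus\SS_i)^\perp$ inside $\Ind\AA_m$'' is not well-posed as written; one must first invoke Lemma \ref{lemma:MultiplePerpendiculars} to replace each $S$ by the corresponding simple $X_S$ of $\AA_m$.

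A correct route to essential surjectivity is to use the exact left adjoint $L:\AA_m\to\AA_i$ supplied by Proposition \ref{proposition:GeigleLenzing} (via Lemma \ref{lemma:MultiplePerpendiculars}): lift it to $\overline{L}:\Ind\AA_m\to\Ind\AA_i$ and show that the unit $Y'\to \overline{i}\,\overline{L}Y'$ is an isomorphism whenever $Y'\in\{X_S\}^\perp$, by observing that its kernel and cokernel lie in the localizing subcategory generated by the $X_S$ and hence admit some $X_S$ as a simple subobject, contradicting perpendicularity. The paper's two-line proof (``restrict to compact objects and lift'') is pointing at the same identification $\Ind\AA_i \simeq \oa'(i)$, but leaves exactly this verification implicit; your attempt to supply it is worthwhile, it just needs the argument above rather than the induction on length.
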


\begin{proof}
There is an obvious 2-natural equivalence $\oa \to \oa'$ when restricting to the compact objects.  This lifts to a 2-natural transformation as required.
\end{proof}

In what follows, we may thus assume that $\oa$ satisfies the following properties.
\begin{enumerate}
\item For every two composable morphisms $s:i \to j$ and $t:j \to k$, we have $\oa(t \circ s) = \oa(t) \circ \oa(s)$.
\item For every morphism $s:i \to j$, we have that $\sigma^{\oa}_i = \sigma^{\oa}_j \circ \oa(s)$.
\end{enumerate}

\subsection{Injectives in $\overline{\AA}$}

The category of injectives of $\overline{\aa}(i) = \Ind \AA_i$ has been described in \S\ref{subsection:Tubes}.  We will now describe the category of injectives of $\overline{\AA}$.  First note that Proposition \ref{proposition:GeigleLenzing} shows that the embedding $\aa(s):\aa(i) \to \aa(j)$ has an exact left adjoint (as $\aa(i)$ is the perpendicular subcategory on finitely many exceptional simple objects in $\aa(j)$ by Lemma \ref{lemma:MultiplePerpendiculars}) and thus $\overline{\aa}(s):\overline{\aa}(i) \to \overline{\aa}(j)$ maps injective objects to injective objects by Proposition \ref{proposition:ToInjectives}.

We infer that $\sigma^{\oa}_i: \overline{\aa}(i) \to \overline{\AA}$ also maps injective objects to injective objects.  Indeed, if $I \in \oa(i)$ would be an injective object such that $\sigma^{\oa}_i(I) \in \overline{\AA}$ is not injective, then there is a nonsplit monomorphism $\sigma_i^{\oa}(I) \to X$ for some $X \in \overline{\AA}$.  This implies there is an $s: i \to j$ in $\PP$ and an object $X' \in \oa(j)$ such that $\oa(s)(I) \to X'$ is a nonsplit monomorphism.  Contradiction.

\begin{proposition}
The category $\overline{\AA}$ has enough injectives and the results of Theorem \ref{theorem:Matlis} holds.  The indecomposable injectives are given by the injective envelopes of the simple objects.
\end{proposition}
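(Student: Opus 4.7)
The plan is to reduce each claim to a statement about one of the Grothendieck categories $\oa(i) = \Ind \AA_i$: by Proposition~\ref{proposition:FiniteShape} each $\AA_i$ is a tube (or of type $A_n$), so $\oa(i)$ is a locally finite Grothendieck category and Theorem~\ref{theorem:Matlis} applies in it. Throughout, $\sigma^{\oa}_i: \oa(i) \to \overline{\AA}$ is exact by Proposition~\ref{proposition:AbelianColimit}, fully faithful by Proposition~\ref{proposition:FilteredColimit}, and sends injectives to injectives by the discussion immediately preceding this proposition.

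For \emph{enough injectives}, represent a given $X \in \overline{\AA}$ by Proposition~\ref{proposition:FilteredColimit} as $X \cong \sigma^{\oa}_i(X')$ with $X' \in \oa(i)$, embed $X' \hookrightarrow I'$ into an injective of $\oa(i)$, and apply $\sigma^{\oa}_i$ to obtain a monomorphism $X \hookrightarrow \sigma^{\oa}_i(I')$ into an injective of $\overline{\AA}$.

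For the \emph{identification of indecomposable injectives}, for each simple $S \in \overline{\AA}$ pick $i$ with $S \cong \sigma^{\oa}_i(S_0)$ and set $I(S) := \sigma^{\oa}_i(I_{\oa(i)}(S_0))$. I claim this is independent of $i$: for every $s: i \to j$ in $\PP$ the transition $\oa(s)$ is exact, fully faithful and preserves injectives, so $\oa(s)(I_{\oa(i)}(S_0))$ is an injective of $\oa(j)$ with local endomorphism ring (preserved by the fully faithful $\oa(s)$), hence indecomposable; Theorem~\ref{theorem:Matlis} then forces it to be the injective envelope of its simple socle, which must contain (and so equal) $S_0$. The object $I(S)$ is injective in $\overline{\AA}$ and indecomposable for the same endomorphism-ring reason, and $S$ is essential in it: a nonzero subobject $Y \hookrightarrow I(S)$ is represented by a nonzero monomorphism $Y' \hookrightarrow I_{\oa(k)}(S_0)$ in some $\oa(k)$ (monomorphisms and nonzero objects lift through the exact fully faithful $\sigma^{\oa}_k$), where essentiality of $S_0$ in $I_{\oa(k)}(S_0)$ yields $Y' \cap S_0 \neq 0$; exactness of $\sigma^{\oa}_k$ pushes this to $Y \cap S \neq 0$ in $\overline{\AA}$.

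For the \emph{Matlis-type decomposition}, given an injective $J \in \overline{\AA}$ write $J \cong \sigma^{\oa}_i(J_0)$, embed $J_0 \hookrightarrow \tilde J_0$ into an injective of $\oa(i)$, and split the resulting monomorphism $J \hookrightarrow \sigma^{\oa}_i(\tilde J_0)$. Decomposing $\tilde J_0 \cong \bigoplus_\alpha I_\alpha$ via Theorem~\ref{theorem:Matlis}, a short calculation based on Proposition~\ref{proposition:FilteredColimit} together with the fact that each $\oa(s)$ preserves direct sums (being the Ind-extension of an exact functor, hence preserving filtered colimits) shows that $\sigma^{\oa}_i$ carries this direct sum to a direct sum in $\overline{\AA}$; hence $J$ is a direct summand of $\bigoplus_\alpha \sigma^{\oa}_i(I_\alpha)$, and a Krull--Schmidt / Azumaya argument, available because each $\sigma^{\oa}_i(I_\alpha)$ has local endomorphism ring, produces the decomposition $J \cong \bigoplus_\beta I(S_\beta)$ unique up to permutation. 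The same reduction shows that any direct sum of injectives in $\overline{\AA}$ can be computed inside a single $\oa(i)$ and is therefore injective. The main obstacle I anticipate is that $\overline{\AA}$ need not be cocomplete, so an arbitrary infinite family of indecomposable injectives may fail to admit a direct sum in $\overline{\AA}$; this is resolved by the observation that the direct sums which do exist in $\overline{\AA}$ all arise from some single $\oa(i)$, reducing every Matlis-type statement to the well-behaved Grothendieck setting.
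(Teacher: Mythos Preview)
Your proof is correct and follows the same strategy as the paper's (very terse) argument: transfer each Matlis-type claim from the Grothendieck categories $\oa(i)$ to the 2-colimit via the exact, fully faithful, injective-preserving functors $\sigma^{\oa}_i$. One small simplification for the decomposition step: since $\sigma^{\oa}_i$ is fully faithful and exact, any preimage $J_0$ of an injective $J \in \overline{\AA}$ is already injective in $\oa(i)$ (a monomorphism out of $J_0$ stays monic under $\sigma^{\oa}_i$, splits there, and the retraction descends by full faithfulness), so you can apply Theorem~\ref{theorem:Matlis} directly to $J_0$ and avoid the detour through $\tilde J_0$ and the Krull--Schmidt/Azumaya argument.
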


\begin{proof}
These properties are true for each category $\overline{\aa}(i)$ where $i \in \PP$ and they carry over through the 2-colimit.
\end{proof}

For each simple object $S \in \overline{\AA}$, we will denote its injective hull by $I(S)$.  Thus $I(S)$ is the unique (up to isomorphism) indecomposable injective in $\overline{\AA}$ such that there is a monomorphism $S \to I(S)$.  For every $S \in \SS$, we will fix such an object $I(S)$.

\begin{remark}
When the set $\SS$ is infinite, there is no object ``$\oplus_{S \in \SS} I(S)$'' in $\overline{\AA}$.
\end{remark}

\begin{remark}\label{remark:injectives}
Given two injective objects $I_1,I_2 \in \overline{\AA}$, there is a $j \in \PP$ such that
$$\Hom_{\overline{\AA}}(I_1, I_2) \cong \Hom_{\overline{\aa}(j)}(J_1,J_2)$$
where $\sigma^{\oa}_j J_1 \cong I_1$ and $\sigma^{\oa}_j J_2 \cong I_2$ (see Proposition \ref{proposition:FilteredColimit}).
\end{remark}

We will describe the Hom-spaces between indecomposable injectives in a similar way as in \cite{AmdalRingdal68} (see \S\ref{subsection:Tubes}).  We will fix an $S \in \SS$ and for every $T \in \SS$ we will write $A_T = \End_{\overline{\AA}} I(T)$.
\begin{enumerate}
	\item We know that $A_S \cong k[[x]]$ as algebras (see Remark \ref{remark:injectives}).  We will fix such an isomorphism and identify $A_S$ with $k[[x]]$ using this isomorphism.
	\item\label{enumerate:ST} For every $T \in \SS$, we have that $\Hom(I(S),I(T)) \cong k[[x]]$ as right $A_S$-modules (see Remark \ref{remark:injectives}).  Again, we will fix such an isomorphism and identify $\Hom(I(S),I(T))$ with $k[[x]]$.  To avoid confusion, we will sometimes write $1_{S,T}$ and $x_{S,T}$ for $1,x \in k[[x]] = \Hom(I(S),I(T))$.
	Note that there is a map $\varphi: \Hom(I(S),I(T)) \to A_S$ (which is an isomorphism of right $A_S$-modules) which is defined by
$$1_{S,T} \circ \varphi(f) = f.$$
	\item For every $T_1,T_2 \in \SS$, we define a map $\psi:\Hom(I(T_1),I(T_2)) \to \Hom(I(S),I(T_2))$ of left $A_{T_2}$-modules by
$$\psi(g) = g \circ 1_{S,T_1}.$$
Remark \ref{remark:injectives} implies $\psi$ will always be monomorphism and as such defines an isomorphism with a submodule.
If $T_1 \leq T_2$ (in the ordering given in Definition \ref{definition:Ordering}) then it follows from Remark \ref{remark:injectives} that every morphism in $\Hom(I(S),I(T_2))$ factors through $I(T_1)$ and thus through $1_{S,T_1}$ such that $\varphi$ is an isomorphism.
If $T_1 > T_2$ then no map in the image of $\psi$ will factor through $1_{S,T_1}$.  However, in this case Remark \ref{remark:injectives} shows that the image of $\psi$ is isomorphic to $xk[[x]]$ as a submodule of $\Hom(I(S),I(T_2))$.
We will use these isomorphisms to identify $\Hom(I(T_1),I(T_2))$ with $k[[x]]$ or $xk[[x]]$, respectively.  
\end{enumerate}

\begin{lemma}
With the above identifications, for all $T_1,T_2,T_3 \in \SS$ the composition
$$\Hom(I(T_2),I(T_3)) \otimes_k \Hom(I(T_1),I(T_2)) \to \Hom(I(T_1),I(T_3))$$
corresponds to the multiplication.
\end{lemma}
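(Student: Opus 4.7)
The plan is to reduce the claim to a direct associativity calculation by unwinding the defining relations of $\varphi$ and $\psi$. Fix morphisms $f : I(T_1) \to I(T_2)$ and $g : I(T_2) \to I(T_3)$, and let $a, b \in A_S \cong k[[x]]$ denote the elements with which $f$ and $g$ are identified under the composite isomorphisms $\varphi \circ \psi$. Unwinding the definitions, these identifications amount to
$$f \circ 1_{S,T_1} \;=\; \psi(f) \;=\; 1_{S,T_2} \circ a \qquad \text{and} \qquad g \circ 1_{S,T_2} \;=\; \psi(g) \;=\; 1_{S,T_3} \circ b.$$
The lemma then asks us to show that the element of $A_S$ corresponding to $g \circ f$ under the analogous identification of $\Hom(I(T_1),I(T_3))$ is the product $ba$.

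The verification is immediate from associativity of composition:
\begin{align*}
\psi(g \circ f) &= (g \circ f) \circ 1_{S,T_1} \;=\; g \circ (f \circ 1_{S,T_1}) \\
&= g \circ 1_{S,T_2} \circ a \;=\; 1_{S,T_3} \circ b \circ a.
\end{align*}
Applying $\varphi$, which by its defining relation $1_{S,T_3} \circ \varphi(h) = h$ is the right $A_S$-module isomorphism sending $1_{S,T_3} \circ \alpha$ to $\alpha$, yields $\varphi(\psi(g \circ f)) = b \cdot a$. This is exactly the multiplication in $A_S \cong k[[x]]$ of the elements identified with $g$ and $f$.

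The only bookkeeping is to check that the statement is compatible with the case analysis in item (\ref{enumerate:ST}) of the setup: when $T_1 > T_3$ the target $\Hom(I(T_1),I(T_3))$ is identified with the submodule $xk[[x]] \subset k[[x]]$, and one verifies in each of the four sub-cases (according to whether $T_1 \leq T_2$ and $T_2 \leq T_3$) that the product $ba$ indeed lands in $xk[[x]]$ whenever the target demands it; the three situations in which at least one of $a,b$ lies in $xk[[x]]$ cover precisely the cases where $T_1 > T_3$. There is no genuine obstacle here — the real work was carried out in setting up $\varphi$ and $\psi$ as right $A_S$-module isomorphisms, after which the lemma is essentially formal.
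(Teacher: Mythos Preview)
Your proof is correct and follows essentially the same route as the paper: both arguments introduce the composite $\varphi \circ \psi$ (the paper calls it $\theta$) and verify $\theta(g \circ f) = \theta(g)\,\theta(f)$ via the identical associativity chain $(g \circ f)\circ 1_{S,T_1} = g \circ 1_{S,T_2} \circ \theta(f) = 1_{S,T_3} \circ \theta(g) \circ \theta(f)$. Your additional bookkeeping paragraph about $ba$ landing in $xk[[x]]$ when $T_1 > T_3$ is a small consistency check the paper leaves implicit.
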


\begin{proof}
Consider the map $\theta = \varphi \circ \psi: \Hom(I(T_1),I(T_2)) \to A_S$ given by $f \circ 1_{S,T_1} = 1_{S,T_2} \circ \theta(f)$.  Using the above identifications, we have that $f = \varphi(f) \in k[[x]]$.  As a slight abuse of notation, we will denote by $\theta$ also corresponding maps $\Hom(T_2,T_3) \to A_S$ and $\Hom(T_1,T_3) \to A_S$.

To prove the statement of the lemma, one should prove that $\theta(g \circ f) = \theta(g) \circ \theta(f)$, for all $f \in \Hom(I(T_1),I(T_2))$ and $g \in \Hom(I(T_2),I(T_3))$ as composition is multiplication in $A_S$.  Since $1_{S,T_1}$ is a monomorphism in the category of injectives of $\overline{\AA}$, it suffices to show that $1_{S,T_1} \circ \theta(g \circ f) = 1_{S,T_1} \circ \theta(g) \circ \theta(f)$.  we have
\begin{eqnarray*}
1_{S,T_3} \circ \theta (g \circ f) &=& (g \circ f) \circ 1_{S,T_1} \\
&=& g \circ (f \circ 1_{S,T_1}) \\
&=& g \circ 1_{S,T_2} \circ \theta(f) \\
&=& 1_{S,T_3} \circ \theta(g) \circ \theta(f)
\end{eqnarray*}
which yields that $\theta(g \circ f) = \theta(g) \circ \theta(f)$.
\end{proof}

Recall that, for each $i \in \PP$, the category $\oa(i)$ is the subcategory $(\SS \setminus \SS_i)^\perp \subset \overline{\AA}$.  For every $i \in \PP$, we will choose a basic injective cogenerator $\Ja_i \cong \bigoplus_{I(S) \in \oa(i)} I(S) \in \oa(i)$.  We have that $\Hom(\Ja_i,\Ja_i) \cong \oplus_{j,k} \Hom(I_j,I_k)$ and this last one can be written as
$$\begin{pmatrix}
k[[x]] & xk[[x]] & \cdots & xk[[x]] & xk[[x]] \\
k[[x]] & k[[x]] & \cdots & xk[[x]] & xk[[x]]\\
\vdots & & \ddots & \vdots & \vdots \\
k[[x]] & k[[x]] &\cdots & k[[x]] &xk [[x]] \\
k[[x]] & k[[x]] &\cdots & k[[x]] &k [[x]]
\end{pmatrix}$$
using the choices made above.  The algebra $\Hom(\Ja_i,\Ja_i)$ has the structure of a pseudocompact algebra when endowed with the usual topology, i.e. generated by the kernels of the maps $\Hom(\Ja_i,\Ja_i) \to \Hom(X,\Ja_i)$ where $X$ ranges over the finite length subobjects of $\Ja_i$.  Furthermore, this is the unique pseudocompact topology (see \S\ref{subsection:Tubes}).  For any arrow $s:i \to j$ in $\PP$, the algebra map $\Hom(\Ja_i,\Ja_i) \to \Hom(\Ja_j,\Ja_j)$ induced by $\oa(s)$ is continuous.

\subsection{The 2-functors $\bb$ and $\overline{\bb}$}

We had assumed that $\AA$ is not directed and has infinitely many nonisomorphic simple objects.  Choose an $S \in \SS$ and write $\LL = \SS_S,\leq_S$, the poset from Definition \ref{definition:Ordering}.  We define the big tube $\BB$ as $\modc \widehat{k\LL^\bullet}$ and identify the set of isomorphism classes of simple objects of $\BB$ with $\SS$ in the obvious way.

Similarly as with $\AA$, we will define $\BB_i = (\SS \setminus \SS_i)^\perp \subseteq \BB$, and 2-functors $\bb, \overline{\bb}:\PP \to \CAT$ by $\bb(i) = \BB_i$, and $\overline{\bb}(i) = \Ind \BB_i$.  We will write $2\colim \overline{\bb} = \overline{\BB}$ and identify $\ob(i)$ with $(\SS \setminus \SS_i)^\perp \subseteq \overline{\BB}$.  Also, for every object $S \in \SS$ there is a corresponding indecomposable injective $I(S) \in \overline{\BB}$.

As above, we will identify $\Hom(T_1,T_2)$ with $k[[x]]$ (if $T_1 \leq T_2$) or $xk[[x]]$ (if $T_1 > T_2$) such that composition corresponds with multiplication.  For each $i \in \PP$, we will choose an injective cogenerator $\Jb_i \cong \oplus_{I(S) \in \ob(i)} I(S)$ of $\ob(i)$.

We wish to show that $\AA \cong \BB$ by showing that $\aa \cong \bb$.  The last 2-natural equivalence will be induced by a 2-natural equivalence $\rho:\oa \to \ob$ given by functors $(\rho_i: \oa(i) \to \ob(i))_{i \in \PP}$ which map $\Ja_i$ to $\Jb_i$ and by the mapping
\begin{eqnarray*}
\Hom(\Ja_i,\Ja_i) &\to& \Hom(\Jb_i, \Jb_i) \\
f = \sum_{S,S' \in \SS_i} e_S f e_{S'} &\mapsto& \sum_{S,S' \in \SS_i} e_S f e_{S'}
\end{eqnarray*}
where $e_S$ is the idempotent corresponding with the direct summand $I(S)$ of $\Ja_i$ or $\Jb_i$.  Recall that for every $S,S' \in \SS$ we have that both $e_S \Hom(\Ja_i,\Ja_i) e_{S'}$ and $e_S \Hom(\Jb_i,\Jb_i) e_{S'}$ have been identified with $k[[x]]$ or $xk[[x]]$ so that the above map is indeed well-defined .  It follows from Corollary \ref{corollary:WhenEquivalent} that the functors $\rho_i$ are well-defined (they are left exact functors commuting with direct sums).  Each $\rho_i$ has an obvious quasi-inverse and hence they are equivalences.

\begin{lemma}\label{lemma:oaob}
The functors $(\rho_i: \oa(i) \to \ob(i))_{i \in \PP}$ defined above give a 2-natural equivalence $\oa \to \ob$.
\end{lemma}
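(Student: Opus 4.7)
The plan is to construct, for each morphism $s: i \to j$ in $\PP$, a natural equivalence $\theta^\rho_s: \ob(s) \circ \rho_i \to \rho_j \circ \oa(s)$ by appealing to Corollary \ref{corollary:WhenEquivalent}, and then to check the coherence condition for composable morphisms. Since each individual $\rho_i$ is already known to be an equivalence, this upgrades $\rho$ to a 2-natural equivalence.

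First I would verify that both $\ob(s) \circ \rho_i$ and $\rho_j \circ \oa(s)$ are left exact functors $\oa(i) \to \ob(j)$ commuting with arbitrary direct sums. For $\oa(s)$ and $\ob(s)$ this is because they are embeddings of perpendicular subcategories with exact left adjoints (Proposition \ref{proposition:GeigleLenzing} together with Lemma \ref{lemma:MultiplePerpendiculars}); for $\rho_i$ and $\rho_j$ it is by construction. Next I would compute both functors on the quasi-finite injective cogenerator $\Ja_i$ of $\oa(i)$. The functor $\oa(s)$ sends $\Ja_i$ to the direct summand of $\Ja_j$ indexed by the simples $\SS_i \subseteq \SS_j$, and $\rho_j$ sends this summand to the corresponding summand of $\Jb_j$. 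Dually, $\rho_i$ sends $\Ja_i$ to $\Jb_i$, and $\ob(s)(\Jb_i)$ is the summand of $\Jb_j$ indexed by $\SS_i$. Both resulting objects are canonically isomorphic to $\bigoplus_{S \in \SS_i} I(S) \in \ob(j)$.

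The main obstacle is the next step: showing that the two compositions induce the same continuous algebra map on $\End$ of this image, so that Corollary \ref{corollary:WhenEquivalent} yields the required natural equivalence. This is where the globally consistent identifications $\Hom(I(T_1),I(T_2)) \cong k[[x]]$ (or $xk[[x]]$) on both the $\overline{\AA}$-side and the $\overline{\BB}$-side are essential. Because $\oa(i)$ and $\oa(j)$ sit as full subcategories of $\overline{\AA}$, the inclusion $\oa(s)$ induces the identity under these identifications; likewise on the $\overline{\BB}$-side for $\ob(s)$. The map $\rho_i$ (resp.\ $\rho_j$) was defined by the identity on each matrix entry under these identifications, hence both $\End(\Ja_i) \to \End(\Jb_j\text{-summand})$ obtained via the two routes coincide with the matrix map $f = \sum e_S f e_{S'} \mapsto \sum e_S f e_{S'}$. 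Continuity with respect to the unique pseudocompact topology is automatic. Corollary \ref{corollary:WhenEquivalent} then provides $\theta^\rho_s$, unique up to natural equivalence.

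Finally, I would verify the coherence condition. Because we have reduced to the strict case in which $\oa(t \circ s) = \oa(t) \circ \oa(s)$ and $\ob(t \circ s) = \ob(t) \circ \ob(s)$, the diagram in the definition of a 2-natural transformation collapses to the assertion that $\theta^\rho_{t \circ s}$ equals the horizontal composite $(\theta^\rho_t \bullet 1_{\oa(s)}) \circ (1_{\ob(t)} \bullet \theta^\rho_s)$. Both sides are natural transformations between the same pair of left exact, direct-sum-preserving functors $\oa(i) \to \ob(k)$, and by the preceding calculation they induce the same map on the endomorphism algebra of the image of $\Ja_i$; the uniqueness clause of Corollary \ref{corollary:WhenEquivalent} gives the identification. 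Combining these equivalences with the fact that each $\rho_i$ is an equivalence of categories yields the 2-natural equivalence $\rho: \oa \to \ob$.
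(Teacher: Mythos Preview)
Your proposal is correct and follows essentially the same strategy as the paper: verify that $\ob(s)\circ\rho_i$ and $\rho_j\circ\oa(s)$ agree on the injective cogenerator $\Ja_i$ and induce the same continuous algebra map on its endomorphism ring, then invoke Corollary \ref{corollary:WhenEquivalent}. The paper packages the identification $\oa(s)(\Ja_i)\cong\bigoplus_{S\in\SS_i}I(S)\subset\Ja_j$ via an idempotent $e\in\End(\Ja_j)$ and a short commutative diagram of kernels, whereas you argue it directly from the global identifications of the $\Hom(I(T_1),I(T_2))$; this is only a cosmetic difference. You are in fact more careful than the paper in two places: you spell out why both composites are left exact and commute with direct sums, and you attempt to verify the coherence condition for composable $s,t$, which the paper omits entirely. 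One small caution on that last point: the ``uniqueness up to natural equivalence'' in Corollary \ref{corollary:WhenEquivalent} tells you the two composite functors are isomorphic, but does not by itself pin down the natural transformation; to make the coherence square commute on the nose you should appeal to Theorem \ref{theorem:CoalgebraFunctors} (the equivalence of categories between functors and bicomodules), under which both sides of the square correspond to the identity bicomodule map.
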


\begin{proof}
We need only to show that that $\ob(s) \circ \rho_i:\oa(i) \to \ob(j)$ and $\rho_j \circ \oa(s):\oa(i) \to \ob(j)$ are naturally equivalent for any $s:i \to j$ in $\Mor \PP$.

There is an idempotent $e = \sum_{I(S) \in \oa(i)} e_{I(S)} \in \Hom(\Ja_j,\Ja_j)$ such that $\oa(s)(\Ja_i) \cong \ker e$.  It follows from the definition of $\rho_j$ that the diagram
$$\xymatrix{0 \ar[r] & \rho_j \circ \oa(s) \Ja_i \ar[r] & \rho_j \Ja_j \ar[r]^{\rho_j e}\ar@{=}[d] & \rho_j \Ja_j\ar@{=}[d] \\
0 \ar[r] & \ob(s) \circ \rho_i \Ja_i \ar[r] & \Jb_j \ar[r]^e & \Jb_j}$$
commutes (the rows are exact).  This shows that $\rho_j \circ \oa(s) \Ja_i \cong \ob(s) \circ \rho_i \Ja_i$ and thus by Corollary \ref{corollary:WhenEquivalent}, so that $\rho_j \circ \oa(s) \cong \ob(s) \circ \rho_i$.
\end{proof}

\begin{proposition}\label{proposition:AAisBigTube}
The categories $\AA$ and $\BB$ are equivalent.
\end{proposition}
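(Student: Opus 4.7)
The plan is to deduce this proposition from the 2-natural equivalence $\rho: \oa \to \ob$ established in Lemma \ref{lemma:oaob}, together with the identifications $2\colim \aa \cong \AA$ and $2\colim \bb \cong \BB$.

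First, I would verify that each functor $\rho_i: \oa(i) \to \ob(i)$ restricts to an equivalence $\aa(i) \to \bb(i)$. The subcategory $\aa(i) = \AA_i$ sits inside $\oa(i) = \Ind \AA_i$ as the full subcategory of compact (equivalently finite-length) objects of the locally finite Grothendieck category $\oa(i)$, and similarly for $\bb(i) \subseteq \ob(i)$. Because $\rho_i$ is an equivalence of categories, it preserves this intrinsic property and hence restricts to an equivalence $\rho_i|_{\aa(i)}: \aa(i) \to \bb(i)$.

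Next, I would check that the natural equivalences $\theta^\rho_s: \ob(s) \circ \rho_i \to \rho_j \circ \oa(s)$ restrict to natural equivalences between the corresponding functors $\aa(i) \to \bb(j)$; this is automatic once the underlying 1-cells restrict, as one observes that $\oa(s)$ and $\ob(s)$ themselves preserve compactness (each $\aa(s)$ and $\bb(s)$ is an embedding between length categories). The coherence diagrams in the definition of a 2-natural transformation are then inherited directly from those for $\rho$, producing a 2-natural equivalence $\aa \to \bb$.

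Finally, 2-colimit functoriality---the fact that a 2-natural equivalence between 2-functors $\PP \to \Cat$ induces an equivalence of their 2-colimits---gives $\AA \cong 2\colim \aa \cong 2\colim \bb \cong \BB$, which is the desired equivalence. The serious work has already been carried out in constructing $\rho$ via the pseudocompact algebra description of Corollary \ref{corollary:WhenEquivalent}, so the present proposition is essentially a formal consequence; the main (minor) obstacle is the bookkeeping needed to confirm that the restrictions of the $\rho_i$ really are compatible with the 2-functorial structure of $\aa$ and $\bb$, but this is forced by the naturality data already verified for $\rho$.
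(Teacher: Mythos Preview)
Your proposal is correct and follows exactly the same approach as the paper: use Lemma \ref{lemma:oaob} to obtain the 2-natural equivalence $\oa \to \ob$, restrict to compact objects to get a 2-natural equivalence $\aa \to \bb$, and conclude $\AA \cong 2\colim \aa \cong 2\colim \bb \cong \BB$. The paper states this in three sentences, and your version simply spells out the restriction step in more detail.
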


\begin{proof}
It follows from Lemma \ref{lemma:oaob} that $\oa$ and $\ob$ are 2-naturally equivelent.  Restricting to the compact objects, this gives a 2-natural equivalence $\aa \to \bb$.  Hence $\AA \cong \BB$.
\end{proof}

\subsection{Classification} Let $\AA$ an indecomposable hereditary abelian Ext-finite uniserial category with Serre duality.  We have shown above that $\AA$ is equivalent to a big tube when $\AA$ is not directed.  To complete the classification, it suffices to consider the case where the category $\AA$ is directed.  This case can be handled in a similar way: let $\SS \subseteq \ind \AA$ be the set of simple objects, then $\SS$ has a linear ordering given by $S \leq T$ if there is a path from $S$ to $T$.

We can then construct 2-functors $\aa$, $\oa$ as above (note that $\aa(s):\aa(i) \to \aa(j)$ will map injective objects to injective objects).  The category $\overline{\AA} = 2\colim \oa$ has enough injectives, and we can find an identification
$$\Hom(I(S),I(T)) = \left\{ \begin{array}{ll} k & S \leq T \\ 0 & S > T \end{array} \right.$$
such that composition corresponds to multiplication.

We then define $\BB = \repc \LL$ where $\LL = \SS$ and the 2-functors $\bb, \ob$, and show that $\overline{\BB} = 2\colim \ob$ as above.  We can then show that $\oa \cong \ob$ so that $\AA \cong \BB$ as required.  We leave the details to the reader.

\begin{theorem}\label{theorem:Classification}
Let $\AA$ be an essentially small $k$-linear abelian Ext-finite uniserial hereditary category with Serre duality.  Then $\AA$ is equivalent to one of the following
\begin{enumerate}
\item the category $\repc \LL$ where $\LL$ is a locally discrete linearly ordered poset, either without minimal or maximal elements, or with both a minimal and a maximal element,
\item a (big) tube.
\end{enumerate}
\end{theorem}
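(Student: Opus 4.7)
The proof plan is to assemble the results already developed in \S\ref{section:Properties} and \S\ref{section:2Colimit} and to dispatch the three possible cases: $\AA$ a length category with finitely many simples, $\AA$ not directed and not a length category, and $\AA$ directed and not a length category. Throughout, I would first reduce to the indecomposable case by writing $\AA = \prod_\alpha \AA_\alpha$ as a product of indecomposable blocks; uniseriality, hereditariness, Ext-finiteness, and Serre duality all pass to each block, and the description in the statement is closed under taking disjoint unions of the underlying posets $\LL$ (recall a disjoint union of linearly ordered posets without extrema is permitted by interpreting each component separately) and under taking disjoint copies of tubes, so it suffices to classify indecomposable $\AA$.

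If $\AA$ is a length category with finitely many simple objects then Theorem \ref{theorem:Length} already gives the classification: either $\AA \cong \rep_k A_n$, which is $\repc \LL$ for $\LL = \{1 < 2 < \cdots < n\}$ (a locally discrete linear order with both extrema), or $\AA \cong \nilp_k \tilde A_n$, which is a (non-big) tube. So from now on I may assume $\AA$ is not a length category; by Proposition \ref{proposition:FiniteShape} this forces the set $\SS$ of isomorphism classes of simple objects of $\AA$ to be infinite, and partitions the remaining problem into the \emph{non-directed} case and the \emph{directed} case according to whether the subcategories $\AA_i = (\SS \setminus \SS_i)^\perp$ are of type $\nilp_k \tilde A_n$ or of type $\rep_k A_n$.

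In the non-directed case the proof is exactly Proposition \ref{proposition:AAisBigTube}: fix a simple object $S \in \SS$, equip $\LL = \SS_S$ with the linear order $\leq_S$ from Definition \ref{definition:Ordering} (which is indeed all of $\SS$, is locally discrete, and has neither minimum nor maximum since $\tau$ acts freely on $\SS$), define the candidate big tube $\BB = \modc \widehat{k\LL^\bullet}$, and build the 2-functors $\aa, \oa, \bb, \ob : \PP \to \CAT$ indexed by finite subsets $\SS_i \subseteq \SS$. Lemma \ref{lemma:oaob} produces the 2-natural equivalence $\rho: \oa \to \ob$ from Corollary \ref{corollary:WhenEquivalent} applied to the pseudocompact-algebra isomorphisms $\Hom(\Ja_i,\Ja_i) \cong \Hom(\Jb_i,\Jb_i)$; restricting to the compact objects and passing to the 2-colimit gives $\AA \cong \BB$.

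The directed case is handled in parallel, as indicated in the paragraph preceding the statement. I would linearly order $\SS$ by $S \leq T$ iff there is an oriented path from $S$ to $T$ (this is well-defined and linear by directedness together with Corollary \ref{corollary:Connected}; local discreteness follows because every $\AA_i$ is a finite $A_n$-quiver representation category). The candidate is $\BB = \repc \LL$; the main simplification is that now $\Hom(I(S),I(T)) = k$ if $S \leq T$ and $0$ otherwise, so the analogue of the matrix algebra $\Hom(\Ja_i,\Ja_i)$ becomes a finite dimensional triangular path algebra. Again Corollary \ref{corollary:WhenEquivalent} produces $\rho_i$, the analogue of Lemma \ref{lemma:oaob} makes $\rho$ 2-natural, and restricting to compacts gives $\AA \cong \repc \LL$. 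The boundary cases of $\LL$ in the statement (either no extrema, or both a minimum and a maximum) are exactly those arising in this construction: if $\AA$ has no indecomposable projective then $\SS$ has no minimum, and by the Serre-duality correspondence between indecomposable projectives and injectives the same equivalence obtains for maxima, so mixed cases cannot occur.

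The main obstacle, and the step I would spend most care on, is verifying that the identifications of $\Hom(I(T_1),I(T_2))$ with $k[[x]]$ or $xk[[x]]$ made in \S\ref{section:2Colimit} are independent of the compatible choices and assemble into a \emph{continuous} isomorphism of pseudocompact algebras $\Hom(\Ja_i,\Ja_i) \cong \Hom(\Jb_i,\Jb_i)$ which is strictly compatible with the maps $\oa(s), \ob(s)$, as this is what is needed to invoke Corollary \ref{corollary:WhenEquivalent} coherently across $\PP$ and to produce a genuine 2-natural, not merely pointwise, equivalence. The pseudocompact topology is forced by the result from \cite{VanGastelVandenBergh97} quoted in \S\ref{subsection:Tubes}, so continuity is automatic; the coherence is essentially bookkeeping once the base point $S$ and the unit elements $1_{S,T}$ have been fixed, which is the content of Lemma \ref{lemma:oaob}. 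Once this is in hand the classification is complete.
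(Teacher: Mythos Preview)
Your approach is essentially the paper's: reduce to the indecomposable case, dispatch the length case by citation, and in the infinite case run the 2-colimit machinery of \S\ref{section:2Colimit}, splitting according to whether $\AA$ is directed. Your added discussion of why the pseudocompact-algebra identifications are coherent is exactly what the paper proves in the paragraphs leading up to Lemma~\ref{lemma:oaob}, and your treatment of the directed case spells out what the paper leaves as ``similar''.

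There is one small but genuine gap in your case division. You handle ``length category with finitely many simples'' via Theorem~\ref{theorem:Length} and then declare ``from now on I may assume $\AA$ is not a length category''. This misses the intermediate case of a length category with infinitely many simples, which does occur: $\repc \bZ$ is hereditary, uniserial, has Serre duality, every indecomposable has finite length, yet there are infinitely many simples. The paper covers this by citing the stronger references \cite{AmdalRingdal68, Gabriel73, CuadraGomezTorrecillas04, ChenKrause09} for the entire length case rather than Theorem~\ref{theorem:Length}; alternatively, and more cleanly for your write-up, you can simply replace ``not a length category'' everywhere by ``$\SS$ infinite'', since nothing in the 2-colimit argument of \S\ref{section:2Colimit} uses the former hypothesis --- it only needs $\PP$ to be an honest filtered system of strict subcategories. (A separate quibble: in your reduction to the indecomposable case, a disjoint union of linearly ordered posets is not itself linearly ordered, so the statement of the theorem as written really is about indecomposable $\AA$; your observation that one must reduce is correct, but the closure claim you make does not hold as phrased.)
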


\begin{proof}
If $\AA$ is a length category, then it follows from \cite{AmdalRingdal68}, \cite{Gabriel73}, \cite{CuadraGomezTorrecillas04} or \cite{ChenKrause09} that $\AA$ is equivalent to one of the above categories.

If $\AA$ is not a length category and not directed, then it follows from Proposition \ref{proposition:AAisBigTube} that $\AA$ is equivalent to the (big) tube $\BB$.  If $\AA$ is not a length category but is directed, then it follows in a similar way that $\BB \cong \repc \LL$ for a linearly ordered poset as in the statement of the theorem.
\end{proof}

\providecommand{\bysame}{\leavevmode\hbox to3em{\hrulefill}\thinspace}
\providecommand{\MR}{\relax\ifhmode\unskip\space\fi MR }
% \MRhref is called by the amsart/book/proc definition of \MR.
\providecommand{\MRhref}[2]{%
  \href{http://www.ams.org/mathscinet-getitem?mr=#1}{#2}
}
\providecommand{\href}[2]{#2}

\end{document}